\documentclass[a4paper,11pt]{article}
\usepackage[export]{adjustbox}
\usepackage{aligned-overset}
\usepackage{amsmath,amsthm}
\usepackage{authblk}
\usepackage{booktabs}
\usepackage{cancel}
\usepackage{comment}
\usepackage{empheq}
\usepackage[hmargin={26mm,26mm},vmargin={30mm,35mm}]{geometry}
\usepackage{hyperref}
\usepackage{rotating}
\usepackage{siunitx}
\usepackage{xcolor}

\usepackage{newtxtext}
\usepackage{newtxmath}
\usepackage{subcaption}
\usepackage{multirow}

\usepackage{pgfplots}
\pgfplotsset{compat=1.18}

\allowdisplaybreaks

\newcommand{\email}[1]{\href{mailto:#1}{#1}}

\newcommand{\tnorm}[1]{{\vert\kern-0.25ex\vert\kern-0.25ex\vert #1
    \vert\kern-0.25ex\vert\kern-0.25ex\vert}_h}

\newcommand{\nTF}{ {n}_{TF}}

\newcommand{\hu}[1]{\widehat{\underline{#1}}}

\newcommand{\Err}[1]{\mathcal{E}_{#1,h}}

\newcommand{\term}{\mathfrak{T}}

\newcommand{\U}{\underline{U}^k_h}
\newcommand{\Ub}{\underline{U}^k_{h,0}}
\newcommand{\Zh}{\underline{Z}^k_{h,0}}
\renewcommand{\P}{\underline{P}^{k}_h}
\newcommand{\Pb}{\underline{P}^{k}_{h,0}}
\newcommand{\uh}{\underline{u}_h}
\newcommand{\bh}{\underline{b}_h}
\newcommand{\vh}{\underline{v}_h}
\newcommand{\wh}{\underline{w}_h}
\newcommand{\ph}{\underline{p}_h}
\newcommand{\qh}{\underline{q}_h}

\newcommand{\eu}{\underline{e}_{u,h}}
\newcommand{\eb}{\underline{e}_{b,h}}

\newcommand{\Irt}{ {I}^{k+1}_{ {\mathcal{RT\!N}},T}}

\newcommand{\Iu}{\underline{ {I}}^{k}_{ {U},h}}
\newcommand{\Ip}{\underline{I}^{k}_{P,h}}

\renewcommand{\th}{t_{h}}

\newcommand{\curl}{ {\nabla} \times}
\newcommand{\divergence}{ {\nabla} \cdot}
\newcommand{\grad}{ {\nabla}}

\newcommand{\Pe}{\mathrm{Pe}}

\theoremstyle{plain}
\newtheorem{theorem}{Theorem}

\newtheorem{lemma}[theorem]{Lemma}
\newtheorem{corollary}[theorem]{Corollary}

\theoremstyle{remark}
\newtheorem{remark}[theorem]{Remark}

\theoremstyle{definition}

\newcommand{\st}{\;:\;}

\newcommand{\Real}{\mathbb{R}}

\newcommand{\Relative}{\mathbb{Z}}

\newcommand{\Th}{\mathcal{T}_h}
\newcommand{\Fh}{\mathcal{F}_h}
\newcommand{\Fhb}{\mathcal{F}^{\rm b}_h}
\newcommand{\FT}{\mathcal{F}_T}

\newcommand{\Poly}[1]{\mathcal{P}^{#1}}
\newcommand{\RTN}[1]{\mathcal{RT\!N}^{#1}}

\newcommand{\UT}{\underline{U}_T^k}
\newcommand{\Uh}{\underline{U}_h^k}
\newcommand{\UhZ}{\underline{U}_{h,0}^k}

\newcommand{\PT}{\underline{P}_T^k}

\newcommand{\PhZ}{\underline{P}_{h,0}^k}

\newcommand{\IUh}{\underline{I}_{U,h}^k}

\newcommand{\IUT}{\underline{I}_{U,T}^k}

\newcommand{\IRTNT}[1]{I_{\mathcal{RT\!N},T}^{#1}}

\newcommand{\lproj}[2]{\pi^{#1}_{#2}}

\newcommand{\GT}{G_T^k}
\newcommand{\pT}{p_T^{k+1}}

\newcommand{\tF}{t_{\rm F}}

\newcommand{\norm}[2]{\|#2\|_{#1}}
\newcommand{\seminorm}[2]{|#2|_{#1}}

\newcommand{\Seminorm}[2]{\left|#2\right|_{#1}}

\usepackage[normalem]{ulem}
\definecolor{violet}{rgb}{0.580,0.,0.827}

\title{A Reynolds- and Hartmann-semirobust hybrid method for magnetohydrodynamics}
\author{Daniele A. Di Pietro}
\author{J\'er\^{o}me Droniou}
\author{Vito Patierno}
\affil{IMAG, Univ Montpellier, CNRS, Montpellier, France\\
  \email{daniele.di-pietro@umontpellier.fr}, %
  \email{jerome.droniou@cnrs.fr}
  \email{vito.patierno@umontpellier.fr}%
}

\begin{document}
\maketitle


\begin{abstract}
  We propose and analyze a new method for the unsteady incompressible magnetohydrodynamics equations on convex domains with approximations of the velocity, pressure and magnetic field
  inspired by Hybrid High-Order (HHO) methods.
  The proposed method is convection-semirobust, meaning that, for sufficiently smooth solutions, one can derive a priori estimates for the velocity and the magnetic field that do not depend on the inverse of the diffusion coefficients.
  This is achieved while providing relevant additional features, namely an improved order of convergence in the (asymptotic) diffusion-dominated regime as well as features common to HHO-type methods, including
  a small stencil (owing to the absence of inter-element penalty terms) %
  and the possibility to significantly reduce the size of the algebraic problems through static condensation.
  The theoretical results are confirmed by a comprehensive set of numerical experiments.
  \smallskip\\
  \textbf{MSC:} 76W05, 65N30, 65N08
  \smallskip\\
  \textbf{Key words:} magnetohydrodynamics, convection-semirobust methods, HYPRE methods, Hybrid High-Order methods
\end{abstract}


\section{Introduction}
Magnetohydrodynamics (MHD) is the branch of physics that studies the dynamics of electrically conducting fluids in the presence of magnetic fields \cite{Moreau:93}. By combining the principles of fluid mechanics and electromagnetism, MHD describes the interaction between fluid motion, induced electric currents, and the resulting Lorentz forces. This coupling gives rise to a wide range of physical phenomena that are not encountered in conventional fluid flows.
Magnetohydrodynamics has numerous applications in science and engineering. In astrophysics and geophysics, it is used to model the dynamics of stellar interiors and the solar wind. In engineering, it plays a key role, e.g., in the design of liquid-metal cooling systems for advanced nuclear reactors, electromagnetic casting and crystal growth in metallurgical processes, and plasma confinement in controlled nuclear fusion devices. Other important applications include electromagnetic pumps and flow control in liquid-metal technologies.

In this work, combining ideas from~\cite{Beirao-da-Veiga.Di-Pietro.ea:25,Droniou.Yemm:23}, we propose a new method for the unsteady incompressible MHD equations on convex domains with hybrid approximations of
all the involved fields (velocity and magnetic fields as well as fluid and magnetic pressures).
The key novelty of the proposed method is that it delivers approximations of both the velocity and the magnetic field that are quasi-robust with respect to dominant convection.
Quasi-robustness refers here to the fact that, assuming regularity of the exact solution, the error estimates are uniform in both the viscosity and the magnetic diffusivity.
This is achieved while providing relevant additional features, namely an improved order of convergence in the (asymptotic) diffusion-dominated regime, %
a small stencil (owing to the absence of inter-element penalty terms), %
and the possibility to significantly reduce the size of the algebraic problems through static condensation.
\smallskip

Denote by $H^1(\Omega)$, $H(\operatorname{curl};\Omega)$, and $H(\operatorname{div};\Omega)$ the usual Hilbert spaces of functions that are square-integrable along with, respectively, their gradient, curl, or divergence.
Considering Maxwell's equations with homogeneous boundary conditions on a domain $\Omega$, the natural functional space for the magnetic field $b$ is either $H(\text{curl};\Omega)\cap H_0(\text{div};\Omega)$ or $H_0(\text{curl};\Omega)\cap H(\text{div};\Omega)$, where the subscript indicates the zero-trace subspace.
It is known from~\cite{Amrouche.Bernardi.ea:98} that, if $\Omega\subset \Real^3$ is a convex polyhedron, both the previous spaces are continuously embedded in $H^1(\Omega)^3$.
In this case, one can envisage weak formulations with $b$ in $H^1(\Omega)^3$.
In particular, as observed, e.g., in~\cite{Droniou.Yemm:23}, in this case the curl-curl operator can be replaced by the vector Laplacian, an idea which is taken up in the present work.
\smallskip

The numerical approximation of the MHD equations has been considered in several works starting from the early 1990s.

In~\cite{Gunzburger.Meir.ea:91}, the authors consider a reformulation of the steady problem with both the velocity and magnetic fields in $H^1(\Omega)^3$ and prove optimal error estimates for conforming finite element approximations.
In \cite{Schotzau:04}, on the other hand, a formulation with magnetic field in $H(\operatorname{curl}; \Omega) \cap H(\operatorname{div}; \Omega)$ is used as a starting point for a scheme based on Lagrange elements for the velocity and edge Nédélec elements for the magnetic field.
Both papers work under a small-data assumption, which is classically needed for the analysis of the steady problem and limits the applicability of the theoretical results to the diffusion-dominated regime.
This assumption can be relaxed when compactness arguments are used for convergence and/or when the unsteady version of the MHD problem is considered, as is the case, e.g., in \cite{Prohl:08,Droniou.Yemm:23}.

Yielding a globally divergence-free velocity field is a desirable property for a numerical scheme, both from the practical standpoint (when, e.g., the velocity field is used in the passive advection of a contaminant) and from the theoretical one (since $H(\operatorname{div};\Omega)$-conformity is linked to properties such as pressure-robustness~\cite{Linke:14}).
In~\cite{Greif.Li.ea:10}, a globally divergence-free discrete velocity is obtained considering a scheme based on a N\'ed\'elec approximation of the magnetic field together with a Brezzi--Douglas--Marini (BDM) approximation of the velocity with penalization of tangential jumps.
The hybridizable discontinuous Galerkin scheme proposed and analyzed in~\cite{Qiu.Shi:19}, generalizing the approach of~\cite{Houston.Schotzau.ea:09} to the nonlinear case, also yields an $H(\operatorname{div};\Omega)$-conforming velocity field.

Recent contributions put the emphasis on robustness with respect to the physical parameters.
In \cite{Beirao-da-Veiga.Dassi.ea:24}, the authors propose a pressure-robust stabilized method relying on a BDM approximation of the velocity and a Lagrange approximation of the magnetic flux.
A similar choice of spaces is considered in~\cite{Beirao-da-Veiga.Dassi.ea:25} for the fully nonlinear problem, along with two different strategies for the enforcement of the solenoidal condition on the magnetic field.
The resulting schemes are both pressure-robust and convection-quasi-robust.
For the version of the methods corresponding to a polynomial degree $k$, the energy error converges with the mesh size at a rate $k$ in the asymptotic (diffusion-dominated) regime, with an improvement up to $k+\frac12$ in the pre-asymptotic (convection-dominated) regime for the version of the method where the solenoidal constraint is enforced through a Lagrange multiplier.
Notice that the topics of convection-quasi-robustness and regime-dependent estimates, along with the corresponding analysis techniques, have only recently emerged in the literature.
Such analysis techniques could in all likelihood be applied to older schemes, but this lies outside the scope of the present work.
For this reason, we limit the discussion of these aspects to \cite{Beirao-da-Veiga.Dassi.ea:24,Beirao-da-Veiga.Dassi.ea:25}, where they are explicitly addressed.
\smallskip

The main contribution of the present work is the introduction and convergence analysis of a method which matches the robustness properties of~\cite{Beirao-da-Veiga.Dassi.ea:25}, and has two additional appealing features:
\begin{itemize}
\item the pre-asymptotic and asymptotic convergence rates are, respectively, $k+\frac12$ and $k+1$, i.e., the convergence rate improves with mesh refinement. This is made possible by the use of techniques inspired by Hybrid High-Order (HHO) methods~\cite{Di-Pietro.Ern.ea:14,Di-Pietro.Ern:15,Di-Pietro.Droniou:20} for the discretization of the diffusive terms;
\item it does not require inter-element jump penalization, resulting in a smaller stencil as well as the possibility to statically condense the internal degrees of freedom in the spirit of~\cite[Section~6.2]{Di-Pietro.Ern.ea:16*1}.
\end{itemize}
The convection- or diffusion-dominated regime is identified by four dimensionless numbers, of Reynolds- or Hartmann-type.
Starting from the reformulation of the MHD equations considered in~\cite{Droniou.Yemm:23}, and taking inspiration from the HYbrid PREssure (HYPRE) methods of~\cite{Botti.Botti.ea:26,Beirao-da-Veiga.Di-Pietro.ea:25} (which, in turn, are inspired by the classical Botti--Massa method of~\cite{Botti.Massa:22}), we approximate all the vector- and scalar-valued variables in hybrid spaces.
We specifically consider the spaces discussed in~\cite[Section~5.4]{Botti.Botti.ea:26}, which rely on the use of Raviart--Thomas--N\'ed\'elec vector-valued variables inside each element.
A careful selection of the trace spaces enforces continuity of the normal components of these variables across interfaces and thus yields $H(\operatorname{div};\Omega)$-conforming element-based approximations of both the velocity and the magnetic field.
The convergence analysis takes inspiration from the techniques developed in~\cite{Di-Pietro.Droniou.ea:15,Botti.Di-Pietro.ea:18,Di-Pietro.Droniou:23*2,Beirao-da-Veiga.Di-Pietro.ea:24,Beirao-da-Veiga.Di-Pietro.ea:25}, where regime-dependent estimates of the contributions to the consistency error are derived based on the value of suitable dimensionless numbers.
This results in error estimates that are robust across the entire range of physically relevant regimes, from convection- to diffusion-dominated.
Since they are obtained by assuming (rather than proving) additional regularity of the exact solution, estimates of this type are usually referred to in the literature as ``quasi-robust''.
The theoretical results are supported by a comprehensive set of two- and three-dimensional numerical tests which, in particular, confirm the predicted orders of convergence in the various regimes.
\smallskip

The rest of the paper is organized as follows.
In Section~\ref{sec:continuous} we recall the formulation of the continuous problem.
Section~\ref{sec:discrete.setting} describes the discrete setting (mesh, discrete spaces).
The discrete problem is stated in Section~\ref{sec:discrete.problem+main.results}, where we also state the main results of the analysis.
A comprehensive set of two- and three-dimensional numerical results is provided in Section~\ref{sec:numerical.results}.
Finally, the proofs of the main theoretical results are given in Section~\ref{sec:analysis}.


\section{Continuous problem}\label{sec:continuous}

Let $\Omega \subset \Real^3$ denote an open, bounded, convex polyhedral domain with Lipschitz boundary $\partial\Omega$ and outward unit normal vector $n$.
We consider a fluid with constant viscosity $\nu$ and magnetic diffusivity $\mu$, source terms
$f, g: \lparen 0,\tF\rbrack \times \Omega \rightarrow \Real^3$
such that $\divergence g = 0$, and
divergence-free initial conditions $u_0, b_0 \in H^1_0(\Omega)^3$.
The unsteady MHD problem reads as follows:
Find the velocity $u : [0, \tF] \times \Omega \to \Real^3$,
the magnetic field $b : [0, \tF] \times \Omega \to \Real^3$,
and the pressure  $p :\lparen 0,\tF\rbrack\times \Omega \rightarrow \Real$ such that
\begin{subequations}\label{eq:strong}
  \begin{equation}\label{eq:strong:ic}
    u(0,\cdot)=u_0 \text{ and } b(0,\cdot)=b_0,
  \end{equation}
  and, for $t \in \lparen 0, \tF\rbrack$,
  \begin{alignat}{4}\label{eq:strong:momentum}
    \partial_{t} u(t)
    - \nu \Delta u(t)
    + (u(t) \cdot  \nabla) u(t)
    -  (\curl b (t)) \times b(t)
    + \nabla p(t) &= f(t)
    &\qquad& \text{in $\Omega$},
    \\ \label{eq:strong:maxwell}
    \partial_{t} b(t)
    + \mu \curl \curl b(t)
    -\curl(u(t) \times b(t))
    &= g(t)
    &\qquad& \text{in $\Omega$},
    \\ \label{eq:strong:mass}
    \divergence u(t) &= 0
    &\qquad& \text{in $\Omega$},
    \\ \label{eq:strong:gauge}
    \divergence b(t) &= 0
    &\qquad& \text{in $\Omega$},
  \end{alignat}
\end{subequations}
completed with appropriate boundary conditions.
Above, given a function of time and space $\psi$, we have adopted the convention that $\psi(t)$ stands for the function of space only $\psi(t,\cdot)$.

Taking inspiration from~\cite{Droniou.Yemm:23} and assuming, for the time being, sufficient regularity for the following manipulations to make sense, the following identities hold in $[0,\tF] \times \Omega$:
\begin{subequations}\label{eq:remove.curl}
  \begin{gather}\label{eq:remove.curl:curl.curl.b}
    \curl \curl b
    = \cancel{\grad (\nabla \cdot b)} - \Delta b,
    \\ \label{eq:remove.curl:curl.u.times.b}
    -\curl( u \times b )
    = ( u \cdot \grad) b
    - ( b \cdot \grad) u
    - \cancel{ u (\nabla \cdot b)}
    + \cancel{ b(\nabla \cdot u), }
    \\ \label{eq:remove.curl:curl.b.times.b}
    (\curl b) \times b
    = ( b \cdot \grad) b
    - \frac12 \grad |b|^2,
  \end{gather}
\end{subequations}
where we have used~\eqref{eq:strong:mass} and~\eqref{eq:strong:gauge} in the cancellations.

To write a variational formulation of problem~\eqref{eq:strong}, we introduce a Lagrange multiplier $r$ to enforce the divergence-free constraint~\eqref{eq:strong:gauge}.
This new variable can be regarded as a magnetic pressure incorporating the contribution $\frac12 |b|^2$ in ~\eqref{eq:remove.curl:curl.b.times.b} in the same spirit as the usual kinematic pressure.
Accounting for \eqref{eq:remove.curl}, problem \eqref{eq:strong} becomes:
Find $u$, $b$, and $p$ as above, as well as $r : \lparen 0,\tF\rbrack \times \Omega \to \Real$ such that, for $t \in \lparen 0, \tF\rbrack$,
\begin{subequations}\label{eq:modified}
  \begin{alignat}{4}\label{eq:modified:momentum}
    \partial_t u(t)
    - \nu \Delta u(t)
    + (u(t) \cdot  \grad) u(t)
    - (b(t) \cdot \grad) b(t)
    + \nabla p(t) &= f(t)
    &\qquad& \text{in $\Omega$},
    \\ \label{eq:modified:maxwell}
    \partial_t b(t)
    - \mu \Delta b(t)
    + (u(t) \cdot \grad) b(t)
    - (b(t) \cdot \grad) u(t)
    + \nabla r(t) &= g(t)
    &\qquad& \text{in $\Omega$},
    \\ \label{eq:modified:mass.gauge}
    \divergence u(t)
    = \divergence b(t)
    &= 0
    &\qquad& \text{in $\Omega$},
    \\ \label{eq:modified:bc}
    u(t) = b(t) &= 0 &\qquad& \text{on $\partial \Omega$},
    \\ \label{eq:modified:mean.p.r}
    \int_\Omega p(t)
    = \int_\Omega r(t) &= 0,
  \end{alignat}
\end{subequations}
completed with the initial condition \eqref{eq:strong:ic}.

A few remarks are in order.

\begin{remark}[Space and boundary conditions for the magnetic field]
  In \eqref{eq:remove.curl:curl.curl.b}, the curl-curl operator has been replaced by a vector Laplacian.
  As a consequence, in the variational formulation of problem \eqref{eq:modified}, we take the magnetic field $b$ in $H^1(\Omega)^3$ instead of $H(\operatorname{curl};\Omega) \cap H(\operatorname{div};\Omega)$.

  In order to simplify the exposition and keep the focus on Reynolds- and Hartmann-semi-robustness, we have additionally taken homogeneous Dirichlet boundary conditions for both the velocity and the magnetic fields.
  This makes it possible, in particular, to use the same consistency results for terms of similar nature.
  A physically more accurate choice would have been to enforce $b(t) \cdot n = 0$ and $n\times(\nabla\times b(t))=0$ on $\partial \Omega$.
  With this choice, the magnetic field is still in $H^1(\Omega)^3$ owing to the continuous embedding of $H(\operatorname{curl};\Omega) \cap H_0(\operatorname{div};\Omega)$ into this space for convex domains; see~\cite{Amrouche.Bernardi.ea:98}.
  The minor (yet tedious) modifications required to accommodate these boundary conditions on $b$ can be inferred from~\cite{Droniou.Yemm:23}; see also~\cite{Potherat.Sommeria.ea:02} for a more general discussion of boundary conditions for MHD.
\end{remark}

\begin{remark}[Validity of \eqref{eq:modified} in two space dimensions]\label{rem:2d}
  From the mathematical perspective, the modified formulation~\eqref{eq:modified} also makes sense in two space dimensions, a fact that we will exploit in the numerical tests of Section~\ref{sec:numerical.results:2d}.
\end{remark}

In view of Remark~\ref{rem:2d}, from this point we consider problem \eqref{eq:weak} in dimension $d \in \{2, 3\}$ (for $d = 2$, $\Omega$ is a bounded convex polygon).
Define the bilinear forms
$a : H^1(\Omega)^d \times H^1(\Omega)^d \to \Real$
and $B : H^1(\Omega)^d \times L^2(\Omega) \to \Real$
as well as the trilinear form $t : [ H^1(\Omega)^d ]^3 \to \Real$
such that, for all $(v, w, z) \in [ H^1(\Omega)^d ]^3$ and all $q \in L^2(\Omega)$,
\[
a(w,v) \coloneqq \int_\Omega \grad w : \grad v,
\quad
t(v,w,z) \coloneqq \int_\Omega ( v \cdot \grad) w \cdot z,
\quad
B(v,q) \coloneqq - \int_\Omega (\divergence v)\,q.
\]
Denote by $H^1_0(\Omega)$ the subspace of $H^1(\Omega)$ consisting of functions with vanishing trace on $\partial \Omega$, and let $L^2_0(\Omega) \coloneqq \left\{ q \in L^2(\Omega) \;:\; \int_\Omega q = 0\right\}$.
Assume
$f \in C^{0}(\lparen 0,\tF\rbrack; L^2(\Omega)^d)$ and $g \in C^0(\lparen 0,\tF\rbrack;H(\operatorname{div};\Omega))$
such that $\divergence g = 0$.
The variational formulation reads:
Find $(u,b,p,r) \in [C^{1}([0,\tF]; H^{1}_0(\Omega)^d)]^2 \times [C^{0}(\lparen 0,\tF\rbrack; L^2_{0}(\Omega))]^2$ such that, for all $t \in \lparen 0, \tF\rbrack$ and all $(v, w, q) \in [ H_0^1(\Omega)^d ]^2 \times L^2_0(\Omega)$,%
\begin{equation}\label{eq:weak}
  \begin{alignedat}{4}
    \int_\Omega \partial^{}_{t} u(t) \cdot v
    + \nu a(u(t),v)
    + t(u(t),u(t),v)
    - t(b(t),b(t),v)
    + B(v,p(t))
    &= \int_\Omega f(t) \cdot v,
    \\
    \int_\Omega \partial_t b(t) \cdot w
    + \mu a(b(t),w)
    + t(u(t),b(t),w)
    - t(b(t),u(t),w)
    + B(w,r(t))
    &= \int_\Omega g(t) \cdot w,
    \\
    B(u(t),q)
    = B(b(t),q) &= 0,
  \end{alignedat}
\end{equation}
completed with the initial condition \eqref{eq:strong:ic}.


\section{Discrete setting}\label{sec:discrete.setting}

\subsection{Mesh}

Denote by $\Th$ a conforming simplicial mesh of $\Omega$, assumed to belong to a regular family in the sense of \cite{Ciarlet:02}.
The associated set of simplicial faces is $\Fh$ and we denote by $\Fhb$ its subset of boundary faces contained in $\partial \Omega$.
Notice that the term ``faces'' refers here to $(d-1)$-dimensional simplices, i.e., faces if $d = 3$ and edges if $d = 2$.
For each element $T \in \Th$, we define $\FT \subset \Fh$ as the set of faces contained in $\partial T$ and, for any $F \in \FT$, we denote by $n_{TF}$ the unit normal vector to $F$ pointing out of $T$.
The diameter of a mesh entity $Y \in \Th \cup \Fh$ is denoted by $h_Y$, so that the mesh size is $h \coloneqq \max_{T \in \Th} h_T$.

In what follows, $x \lesssim y$ means $x \le C y$ with a real number $C > 0$ independent of the mesh size $h$, the viscosity $\nu$, the magnetic diffusivity $\mu$, and, for local inequalities on a mesh entity $Y \in \Th \cup \Fh$, also of $Y$.
Additional assumptions on the dependencies of $C$ will be specified whenever relevant.
Notice that $C$ may depend on quantities such as the space dimension $d$, the domain $\Omega$, the final time $\tF$, the polynomial degree, or the mesh regularity parameter.
We also write $x \simeq y$ for ``$x \lesssim y$ and $y \lesssim x$''.

\subsection{Polynomial spaces}

Let $Y \in \Th \cup \Fh$ be either a mesh element or a face, and let $\ell \ge 0$ be an integer.
We denote by $\Poly{\ell}(Y)$ the set of functions obtained by restricting to $Y$ the polynomials in the spatial variables of total degree at most $\ell$, and we conventionally set $\Poly{-1}(Y) \coloneqq \{0\}$.
The $L^2$-orthogonal projector onto $\Poly{\ell}(Y)$ is $\lproj{\ell}{Y} : L^1(Y) \to \Poly{\ell}(Y)$ such that, for all $q \in L^1(Y)$,
$\int_Y \lproj{\ell}{Y} q \, r = \int_Y q \, r$ for all $r \in \Poly{\ell}(Y)$.
For vector-valued functions, $\lproj{\ell}{Y}$ is understood to act component-wise.

Let now $T \in \Th$.
The Raviart--Thomas--Nédélec finite element space of order $\ell \ge 1$ on $T$ is
\[
\RTN{\ell}(T) \coloneqq \Poly{\ell-1}(T)^d + x \Poly{\ell-1}(T),
\]
with $x$ denoting the spatial coordinate.
Its interpolator $\IRTNT{\ell} : H^1(T)^d \to \RTN{\ell}(T)$ is uniquely defined by the following conditions:
For all $v \in H^1(T)^d$,
\begin{equation}\label{eq:IRTNT}
  \lproj{\ell-2}{T} \IRTNT{\ell} v = \lproj{\ell-2}{T} v,
  \qquad
  \IRTNT{\ell} v \cdot n_{TF} = \lproj{\ell-1}{F}(v \cdot n_{TF})
  \quad \forall F \in \FT.
\end{equation}
The following approximation results are proved in~\cite[Lemma~2.1]{Beirao-da-Veiga.Di-Pietro.ea:25} (see also, e.g., \cite[Lemma~3.17]{Gatica:14}, \cite[Proposition~2.5.1]{Boffi.Brezzi.ea:13} or \cite[Theorem~16.4]{Ern.Guermond:21*1} for the case $p = 2$):
For all $p \in [1,\infty]$ and all integers $q$ and $m$ such that $0 \le q \le \ell-1$ and $0 \le m \le q+1$, we have
\begin{equation}\label{eq:RTN_bound}
  \seminorm{W^{m,p}(T)^d}{v - \IRTNT{\ell} v}
  \lesssim h_T^{q+1-m} \seminorm{W^{q+1,p}(T)^d}{v}
  \qquad \forall v \in W^{q+1,p}(T)^d
\end{equation}
and, for each $F \in \FT$,
\begin{equation}\label{eq:RTN_bound_F}
  \norm{L^p(F)^d}{v - \IRTNT{\ell} v}
  \lesssim h_T^{q+1-\frac1p} \seminorm{W^{q+1,p}(T)^d}{v}
  \qquad \forall v \in W^{q+1,p}(T)^d.
\end{equation}

\subsection{Discrete spaces}
Given an integer $k\geq 0$, define the discrete spaces of vector- and scalar-valued fields, respectively, as
\begin{multline*}
  \U\coloneq\Big\{ \vh = ((v_T )_{T\in \Th},(v_F)_{F\in \Fh}) \;:\;
  \\
  \text{$v_T \in \RTN{k+1}(T)$  for all $T \in \Th$ and
    $v_F \in  \mathcal{P}^{k}(F)^d$ for all $F \in \Fh$}
  \Big\}
\end{multline*}
and
\begin{multline*}
  \P\coloneq\Big\{
  \underline{q}_h = ((q_T )_{T\in \Th},(q_F)_{F\in \Fh}) \;:\;
  \\
  \text{%
    $q_T \in \Poly{k}(T)$ for all $T \in \Th$
    and $q_F \in \Poly{k}(F)$ for all $F \in \Fh$
  }
  \Big\}.
\end{multline*}
The meaning of the polynomial components in these spaces is provided by the interpolators
$\Iu: H^1(\Omega)^d\to \U$ and $\Ip: H^1(\Omega)\to \P$ such that, for all $v\in H^1(\Omega)^d$ and all $q\in H^1(\Omega)$,
\begin{equation}\label{eq:Iu}
  \Iu v\coloneqq ((\Irt v)_{T\in \Th}, (\lproj{k}{F} v)_{F\in \Fh}),
  \qquad
  \Ip q\coloneqq ((\lproj{k}{T} q)_{T\in \Th}, (\lproj{k}{F} q)_{F\in \Fh}).
\end{equation}
The restrictions of the above spaces, their elements, and the interpolators to a mesh element $T$ are denoted by replacing the subscript $h$ with $T$ and are obtained by collecting the components associated with $T$ and its faces.

Given $\vh \in \U$ and $\underline{q}_h \in \P$, we define the broken polynomial functions $v_h \in L^2(\Omega)^d$ and $q_h \in L^2(\Omega)$ such that
\begin{equation}\label{eq:vh.qh}
  (v_h)_{|T} \coloneqq v_T, \qquad (q_h)_{|T} \coloneqq q_T \qquad \forall T \in \Th.
\end{equation}
The subspaces of $\U$ and $\P$ incorporating the homogeneous boundary condition for vector-valued fields and the zero-average condition for scalar-valued fields are, respectively, given by
\[
\Ub \coloneqq \left\{ \vh \in \U \;:\; \text{$v_F = 0$ for all $F \in \Fhb$} \right\},
\qquad
\Pb \coloneqq \left\{ \underline{q}_h \in \P \;:\; \int_{\Omega} q_h = 0 \right\}.
\]


\section{Discrete problem and main results}\label{sec:discrete.problem+main.results}

To obtain a numerical scheme, we replace the spaces and forms in the weak formulation~\eqref{eq:weak} with their discrete counterparts.
The relevant spaces have already been defined in the previous section.
This section contains the definitions of the forms that arise in the discretization of the various terms along with their main properties, as well as the statement of the discrete problem.

\subsection{Discrete $L^2$-like inner product and norm}
To discretize the unsteady term, we define the discrete $ L^2$-like inner product $ (\cdot,\cdot)_{0,h}:\U\times\U \to \Real$ such that, for all $(\wh,\vh)\in \U\times \U$,
\begin{equation}\label{eq:prod.0.h}
  \begin{gathered}/JCOMP/submissions
    (\wh, \vh)_{0,h} \coloneqq \sum_{T\in \Th} (\underline{w}_T,\underline{ {v}}_T)_{0,T},
    \\
    (\underline{w}_T,\underline{v}_T)_{0,T}\coloneqq \int_T w_T \cdot  {v}_T  + h_T\sum_{F\in \mathcal{F}_T} \int_F (w_F - w_T) \cdot ( {v}_F -  {v}_T).
  \end{gathered}
\end{equation}
The corresponding global and local norms are defined as follows: for all $\vh\in \U$,
\begin{equation}\label{eq:norm.0.h}
  \text{%
    $\norm{0,h}{\vh} \coloneqq (\vh, \vh)_{0,h}^{\frac12}$
    and $\norm{0,T}{\underline{v}_T} \coloneqq (\underline{v}_T, \underline{v}_T)_{0,T}^{\frac12}$
    for all $T\in \Th$.
  }
\end{equation}

\subsection{Diffusion}

Given a mesh element $T \in \Th$, we define the velocity reconstruction $\pT : \UT \to \Poly{k+1}(T)^d$ such that, for all $\underline{v}_T \in \UT$,
\begin{equation*}
  \begin{gathered}
    \int_T \nabla \pT \underline{v}_T : \nabla w
    = - \int_T v_T \cdot \Delta w
    + \sum_{F \in \FT} \int_F v_F \cdot (\nabla w \, n_{TF})
    \qquad \forall w \in \Poly{k+1}(T)^d,
    \\
    \int_T \pT \underline{v}_T =
    \begin{cases}
      \sum_{F \in \FT} \frac{d_{TF}}{d} \int_F v_F & \text{if $k = 0$},
      \\
      \int_T v_T & \text{otherwise},
    \end{cases}
  \end{gathered}
\end{equation*}
where, for any $F \in \FT$, $d_{TF}$ denotes the distance of the center of mass of $T$ from the plane containing $F$.
The diffusion bilinear form $a_h : \Uh \times \Uh \to \Real$ is such that, for all $(\wh, \vh) \in \Uh \times \Uh$,
\[
\begin{gathered}
  a_h(\wh, \vh)
  \coloneqq \sum_{T \in \Th} a_T(\underline{w}_T, \underline{v}_T),
  \\
  a_T(\underline{w}_T, \underline{v}_T)
  \coloneqq \int_T \nabla \pT \underline{w}_T : \nabla \pT \underline{v}_T
  + s_T(\underline{w}_T, \underline{v}_T).
\end{gathered}
\]
Above, $s_T : \UT \times \UT \to \Real$ denotes a stabilization bilinear form that penalizes the components of $(\delta_T^k \underline{v}_T, (\delta_{TF}^k \underline{v}_T)_{F \in \FT} ) \coloneqq \IUT \pT \underline{v}_T - \underline{v}_T$. A possible expression for $s_T$ is the following: for all $(\underline{w}_T, \underline{v}_T) \in \UT$,
\[
s_T(\underline{w}_T, \underline{v}_T)
= \lambda_T h_T^{-2} \int_T \delta_T^k \underline{w}_T \cdot \delta_T^k \underline{v}_T
+ h_T^{-1} \sum_{F \in \FT} \int_F \delta_{TF}^k \underline{w}_T \cdot \delta_{TF}^k \underline{v}_T,
\]
with, e.g., $\lambda_T \coloneqq \operatorname{card}(\FT) \frac{h_T^d}{|T|}$ to equilibrate the two contributions.
The purpose of the stabilization is to ensure the following coercivity and boundedness properties:
\begin{equation}\label{eq:ah:norm.equivalence}
  a_h(\vh, \vh)
  \simeq \norm{1,h}{\vh}^2
  \qquad \forall \vh \in \Uh,
\end{equation}
where $\norm{1,h}{\cdot}$ is the $H^1$-like seminorm such that, for all $\vh \in \U$,
\begin{equation}\label{eq:norm.1.h}
  \begin{gathered}
    \norm{1,h}{\vh}^2
    \coloneqq
    \sum_{T\in \Th} \norm{1,T}{\underline{v}_T}^2,
    \\
    \norm{1,T}{\underline{v}_T}^2
    \coloneqq
    \norm{L^2(T)^{d\times d}}{\grad v_T}^2
    + h^{-1}_T \sum_{F\in \mathcal{F}_T} \norm{L^2(F)^d}{v_F - v_T}^2.
  \end{gathered}
\end{equation}
Restricted to $\Ub$, the map $\norm{1,h}{\cdot}$ defines a norm.

\subsection{Solenoidal constraint}

Define the discrete pressure gradient $\GT: \PT \to \RTN{k+1}(T)$ such that, for all $\underline{q}_T\in \underline{P}^k_T$,
\[
\int_T \GT \underline{q}_T \cdot w
= - \int_T q_T\, (\divergence w)
+ \sum_{F\in \mathcal{F}_T} \int_F q_F\, (w\cdot n_{TF})
\qquad \forall w \in \RTN{k+1}(T).
\]
The conservation of mass and Gauss's law~\eqref{eq:modified:mass.gauge} are enforced by the bilinear form $B_h: \U \times \P \to \Real$, such that, for all $(\vh,\underline{q}_h)\in \U \times \P$,
\begin{equation*}
  B_h(\vh,\underline{q}_h)
  \coloneqq
  \sum_{T\in \Th} \int_T  {v}_T \cdot  \GT \underline{q}_T.
\end{equation*}
For future use, we define the following subspace of $\Ub$.
\begin{equation}\label{eq:ZhZ}
  \Zh \coloneqq
  \Big\{ \vh \in \UhZ \st
  \text{$B_h(\vh, \underline{q}_h)=0$ for all $\underline{q}_h \in \Pb$}
  \Big\}.
\end{equation}
A few remarks are in order.

\begin{remark}[Pointwise divergence-free vector fields]\label{rem:div-free-subspace}
  Let $\vh\in \Ub$ be such that
  \begin{equation}\label{eq:discr-solen}
    B_h(\vh,\underline{q}_h)=0 \qquad \forall \qh \in \Pb.
  \end{equation}
  Recalling \cite[Proposition~3.3]{Beirao-da-Veiga.Di-Pietro.ea:25}, \eqref{eq:discr-solen} is equivalent to the following set of conditions:
  \[
  \begin{alignedat}{4}
    \divergence v_T &= 0
    &\qquad& \forall T\in \mathcal{T}_h,
    \\
    v_{T_1} \cdot n_{T_1 F} + v_{T_2} \cdot  {n}_{T_2 F} &= 0
    &\qquad& \forall F \in \Fh \setminus \Fhb,\\
    v_T\cdot n_{TF}&=0 &\qquad& \forall T\in\Th\,,\;\forall F\in\FT\cap\Fhb
  \end{alignedat}
  \]
  where, for any $F \in \Fh \setminus \Fhb$, we have denoted by $T_1$ and $T_2$ the distinct mesh elements such that $F \subset \partial T_1 \cap \partial T_2$. These conditions imply, in particular, that the field $v_h$ defined from $\vh$ according to \eqref{eq:vh.qh} satisfies:
  \begin{equation}\label{eq:vh.zero.div}
    v_h\in H(\operatorname{div};\Omega)\,,\quad \divergence v_h=0 \text{ in $\Omega$}\,,\quad v_h\cdot n=0 \text{ on $\partial\Omega$.}
  \end{equation}
  Moreover, since, for all $T \in \Th$, $v_T$ is both in $\RTN{k+1}(T)$ and divergence-free, it holds in fact that $v_T\in \Poly{k}(T)^d$.
\end{remark}

\begin{remark}[Interpolants of divergence-free functions]\label{rem:interpolate.div.free.functions}
  Let $w \in H^1_0(\Omega)^d$ be such that $\divergence w = 0$.
  By \cite[Proposition~4.3]{Beirao-da-Veiga.Di-Pietro.ea:25}, we have $\Iu w \in \Zh$.
\end{remark}

The choice of the bilinear form $B_h$ is justified by the following result, which is a straightforward consequence of \cite[Lemma~3 and Theorem~14]{Botti.Botti.ea:26}.

\begin{lemma}[Inf-sup condition on $B_h$]
  It holds
  \begin{equation}\label{eq:inf-sup}
    \left(
    \norm{L^2(\Omega)}{q_h}^2
    + \sum_{T \in \Th} h_T^2 \norm{L^2(T)^d}{\GT \underline{q}_T}^2
    \right)^{\frac12}
    \lesssim
    \sup_{\vh \in \UhZ \setminus \{ \underline{0} \}}
    \frac{B_h( \vh, \underline{q}_h)}{\norm{1,h}{\vh}}
    \qquad \forall \underline{q}_h \in \PhZ.
  \end{equation}
\end{lemma}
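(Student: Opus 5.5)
The plan is to control the two contributions on the left-hand side of \eqref{eq:inf-sup} separately, building for each a test function in $\UhZ$, and then to combine the two test functions linearly. Fix $\underline{q}_h \in \PhZ$ and write $\mathcal{S} \coloneqq \sup_{\vh} B_h(\vh,\underline{q}_h)/\norm{1,h}{\vh}$.

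\emph{Pressure part.} Since $\int_\Omega q_h = 0$, the continuous inf-sup condition (Ne\v{c}as) yields $v \in H^1_0(\Omega)^d$ with $\divergence v = -q_h$ and $\norm{H^1(\Omega)^d}{v} \lesssim \norm{L^2(\Omega)}{q_h}$. Take $\vh = \Iu v \in \UhZ$. From the definition of $\GT$, and because $\divergence v_T$ and $q_T$ both lie in $\Poly{k}(T)$,
\[
B_h(\Iu v, \underline{q}_h) = \sum_{T} \Big( -\int_T q_T \divergence(\Irt v) + \sum_{F\in\FT}\int_F q_F\, (\Irt v\cdot n_{TF}) \Big).
\]
The commuting property of the RTN interpolator, $\divergence \Irt v = \lproj{k}{T} \divergence v$ (which follows from \eqref{eq:IRTNT}), turns the volume terms into $\norm{L^2(\Omega)}{q_h}^2$. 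The face terms equal $\sum_F \int_F q_F \lproj{k}{F}(v\cdot n_{TF})$; they cancel between neighbouring elements and vanish on boundary faces because $v=0$ there. Boundedness $\norm{1,h}{\Iu v} \lesssim \seminorm{H^1(\Omega)^d}{v}$ follows from \eqref{eq:RTN_bound}, \eqref{eq:RTN_bound_F} and the trace approximation of $\lproj{k}{F}$. Together these give $\norm{L^2(\Omega)}{q_h} \lesssim \mathcal{S}$.

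\emph{Gradient part.} Define $\wh$ by $w_T \coloneqq h_T^2 \GT \underline{q}_T \in \RTN{k+1}(T)$ and $w_F \coloneqq 0$ for all $F$, so that $\wh \in \UhZ$ and
\[
B_h(\wh, \underline{q}_h) = \sum_T h_T^2 \norm{L^2(T)^d}{\GT \underline{q}_T}^2 .
\]
Inverse and discrete trace inequalities give $\norm{1,h}{\wh}^2 \lesssim \sum_T h_T^{-2}\norm{L^2(T)^d}{w_T}^2 = \sum_T h_T^2 \norm{L^2(T)^d}{\GT\underline{q}_T}^2$. Hence $\big(\sum_T h_T^2 \norm{L^2(T)^d}{\GT \underline{q}_T}^2\big)^{1/2} \lesssim \mathcal{S}$, and adding this to the pressure estimate gives \eqref{eq:inf-sup}.

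\textbf{Main obstacle.} The delicate step is the commutation and face-cancellation computation in the pressure part. There one must check that the $\Poly{k}$ trace unknowns pair correctly with the RTN normal moments prescribed in \eqref{eq:IRTNT}, and that $\lproj{k}{F}$ of a single-valued trace is indeed single-valued across interfaces. If the RTN degree convention did not match, I would instead invoke \cite[Lemma~3 and Theorem~14]{Botti.Botti.ea:26} directly, as the statement suggests.
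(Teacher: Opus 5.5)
Your proof is correct. It differs from the paper mainly in that the paper gives no argument: it declares \eqref{eq:inf-sup} a straightforward consequence of \cite[Lemma~3 and Theorem~14]{Botti.Botti.ea:26}, i.e.\ of the abstract HYPRE inf-sup lemma together with its verification for the $\RTN{k+1}$-based spaces of \cite[Section~5.4]{Botti.Botti.ea:26}.

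You instead prove the result directly from two elementary ingredients.
\begin{itemize}
\item The first is a Fortin-type identity $B_h(\Iu v,\underline{q}_h)=\norm{L^2(\Omega)}{q_h}^2$ for a continuous right inverse $v$ of the divergence. It follows from the commuting property $\divergence\Irt v=\lproj{k}{T}\divergence v$ and from the single-valuedness of both $q_F$ and $\lproj{k}{F}(v\cdot n_{TF})$ up to sign. You combine it with the $\norm{1,h}{\cdot}$-boundedness of $\Iu$, which is exactly \eqref{eq:H1-boundedness}, proved later in the paper.
\item The second is the purely local test function $w_T=h_T^2\GT\underline{q}_T$, $w_F=0$. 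It is admissible because $\GT$ maps into the element velocity space $\RTN{k+1}(T)$ and because $B_h$ does not involve the face unknowns of the velocity.
\end{itemize}

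These are, in all likelihood, precisely the hypotheses that the cited abstract lemma packages. Your version has the advantage of being self-contained. It also shows transparently why the contribution $\sum_{T}h_T^2\norm{L^2(T)^d}{\GT\underline{q}_T}^2$ is controlled by an inverse/trace argument alone, and why face unknowns of the test function play no role in the stability. The citation, by contrast, places the result in a framework that covers other hybrid space pairs at once.

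A minor remark: your opening plan mentions combining the two test functions linearly, but this is not needed. Bounding each contribution separately by the supremum and adding the two bounds, as you actually do, suffices.
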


\subsection{Convection}

The convective trilinear form $\th:[\U]^3 \to \Real$ is such that, for all $(\wh,\vh,\underline{z}_h)\in[\U]^3$,
\begin{equation}\label{eq:th}
  \begin{gathered}
    \th(\wh,\underline{ {v}}_h,\underline{ {z}}_h) \coloneqq \sum_{T\in \Th} t_T(\underline{w}_T,\underline{ {v}}_T,\underline{ {z}}_T),
    \\
    t_T(\underline{w}_T,\underline{ {v}}_T,\underline{ {z}}_T)\coloneqq \int_T (w_T \cdot \grad)  {v}_T \cdot  {z}_T \hspace{2pt}  + \frac12 \sum_{F \in\mathcal{F}_T} \int_F (w_T \cdot \nTF)( {v}_F -  {v}_T ) \cdot ( {z}_F+  {z}_T).
  \end{gathered}
\end{equation}
To state the properties of the convective trilinear form, we need the discrete counterpart of the $W^{1,\infty}$-seminorm such that, for all $\vh \in \U$,
\[
\begin{gathered}
  \norm{1,\infty,h}{\vh}
  \coloneqq
  \max_{T\in \Th} \norm{1,\infty,T}{\underline{v}_T},
  \\
  \norm{1,\infty,T}{\underline{v}_T}
  \coloneqq
  \norm{L^{\infty}(T)^{d\times d}}{\grad v_T}
  + h^{-1}_T \max_{F\in \mathcal{F}_T} \norm{L^{\infty}(F)^d}{v_F - v_T}.
\end{gathered}
\]
Restricted to $\Ub$, $\norm{1,\infty,h}{\cdot}$ is a norm.
Additionally, recalling~\cite[Lemma~4.2]{Beirao-da-Veiga.Di-Pietro.ea:25}, we have
\begin{equation}\label{eq:W.1.infty.boundedness.IUh}
  \norm{1,\infty,h}{\IUh w}
  \lesssim \seminorm{W^{1,\infty}(\Omega)^d}{w}
  \qquad \forall w \in W^{1,\infty}(\Omega)^d.
\end{equation}
By~\cite[Lemmas~3.5 and~4.1]{Beirao-da-Veiga.Di-Pietro.ea:25}, we have the following properties:
\begin{itemize}
\item \emph{Non-dissipativity.} For all $\wh \in \Zh$, we have
  \begin{equation}\label{eq:non-dissipativity}
    \th(\wh, \vh, \vh) = 0 \qquad \forall \vh \in \Ub;
  \end{equation}
\item \emph{Boundedness.} For all $(\wh,\vh,\underline{z}_h)\in \big[ \U\big]^3$, we have
  \begin{equation}\label{thbound}
    |\th(\wh,\vh, \underline{ {z}}_h)| \lesssim
    \norm{L^2(\Omega)^d}{w_h} \norm{1,\infty,h}{\vh} \norm{0,h}{\underline{z}_h}.
  \end{equation}
\end{itemize}
Noticing that $\norm{L^2(\Omega)^d}{w_h}\le \norm{0,h}{\underline{w}_h}$ by \eqref{eq:prod.0.h}--\eqref{eq:norm.0.h}, the bound \eqref{thbound} yields
\begin{equation}\label{thbound.0h}
  |\th(\wh,\vh, \underline{ {z}}_h)| \lesssim
  \norm{0,h}{\underline{w}_h} \norm{1,\infty,h}{\vh} \norm{0,h}{\underline{z}_h}.
\end{equation}
The following lemma generalizes \cite[Lemma~3.5]{Beirao-da-Veiga.Di-Pietro.ea:25} and will play a key role in our analysis.

\begin{lemma}[Skew-symmetry of $\th$]
  For all $(\wh,\underline{ {v}}_h,\underline{ {z}}_h) \in  \Zh\times \Ub \times \Ub $, we have
  \begin{equation}\label{eq:skew-simmetry}
    \th(\wh,\underline{ {v}}_h, \underline{ {z}}_h) =
    -\th(\wh, \underline{ {z}}_h, \underline{ {v}}_h).
  \end{equation}
\end{lemma}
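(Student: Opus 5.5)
The simplest route is polarization of the non-dissipativity property \eqref{eq:non-dissipativity}. Fix $\wh \in \Zh$. Since $\th$ is trilinear, the map $(\vh,\underline{z}_h)\mapsto \th(\wh,\vh,\underline{z}_h)$ is bilinear on $\Ub\times\Ub$. Since $\Ub$ is a vector space, $\vh+\underline{z}_h\in\Ub$ for all $\vh,\underline{z}_h\in\Ub$. Applying \eqref{eq:non-dissipativity} to $\vh$, $\underline{z}_h$ and $\vh+\underline{z}_h$ gives
\[
0=\th(\wh,\vh+\underline{z}_h,\vh+\underline{z}_h)
=\th(\wh,\vh,\vh)+\th(\wh,\vh,\underline{z}_h)+\th(\wh,\underline{z}_h,\vh)+\th(\wh,\underline{z}_h,\underline{z}_h).
\]
The first and last terms vanish, which yields \eqref{eq:skew-simmetry}.

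The only point needing care is that \eqref{eq:non-dissipativity} is invoked exactly as stated, for arbitrary $\vh\in\Ub$ and $\wh\in\Zh$. This is why the lemma is stated on $\Zh\times\Ub\times\Ub$. Bilinearity in the last two arguments is immediate from \eqref{eq:th}, since each element contribution is linear in $(v_T,v_F)$ and in $(z_T,z_F)$.

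If a self-contained argument is preferred over relying on \eqref{eq:non-dissipativity}, I would verify it directly. On each $T$, integrate by parts: $\int_T (w_T\cdot\grad)v_T\cdot v_T = -\frac12\int_T(\divergence w_T)|v_T|^2+\frac12\sum_{F}\int_F (w_T\cdot n_{TF})|v_T|^2$, and the first term vanishes by Remark~\ref{rem:div-free-subspace}. The face term of $t_T$ with $\underline{z}_T=\underline{v}_T$ contributes $\frac12\int_F (w_T\cdot n_{TF})(|v_F|^2-|v_T|^2)$. Summing, what remains is $\frac12\sum_T\sum_F\int_F(w_T\cdot n_{TF})|v_F|^2$. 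This vanishes by the normal continuity of $w_h$ on interior faces, since $v_F$ is single-valued, and by $v_F=0$ on boundary faces. This verification, specifically keeping track of which boundary condition kills which boundary term, is the only step with any substance, and it is already covered by the cited result.
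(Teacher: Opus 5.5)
Your proof is correct and is essentially the paper's argument: the paper also polarizes the non-dissipativity property \eqref{eq:non-dissipativity}, using $\vh-\underline{z}_h$ where you use $\vh+\underline{z}_h$, which is an immaterial difference. Your optional direct verification of \eqref{eq:non-dissipativity} is also correct, though the paper does not carry it out and simply relies on the cited result.
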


\begin{proof}
  For any $\wh \in \Zh$ and any $(\vh,\underline{z}_h)\in [\Ub]^2$, using the non-dissipativity property \eqref{eq:non-dissipativity} followed by the linearity of $\th$ in its second and third argument, we get
  \[
  \begin{aligned}
    0 &= \th(\wh,\vh-\underline{z}_h,\vh - \underline{z}_h)\\
    & = \th(\wh,\vh,\vh - \underline{z}_h) - \th(\wh,\underline{z}_h,\vh - \underline{z}_h)\\
    & = - \th(\wh,\vh,\underline{z}_h) - \th(\wh,\underline{z}_h,\vh),
  \end{aligned}
  \]
  where we again used the non-dissipativity of $\th$ in the last passage.
\end{proof}

Property \eqref{eq:non-dissipativity} suggests that $\th$ can be interpreted as a centered discretization of convective terms. Deriving estimates that are quasi-robust for dominant convection requires strengthening the stability by a (generalized) upwinding term. To this end, given a family of strictly positive real numbers $\alpha \coloneqq (\alpha_T)_{T\in\Th}$, we define the convective stabilization bilinear form $j_{\alpha,h}:\U \times\U \to \Real$ such that, for all $(\wh,\underline{ {v}}_h)\in \U \times \U$,
\begin{equation*} 
  j_{\alpha,h}(\wh,\underline{ {v}}_h) \coloneqq \sum_{T\in \Th} j_{\alpha,T}(\underline{w}_T,\underline{ {v}}_T),
  \qquad
  j_{\alpha,T}(\underline{w}_T,\underline{ {v}}_T)\coloneqq \alpha_T \sum_{F \in \mathcal{F}_T} \int_F (w_F - w_T) \cdot ( {v}_F -  {v}_T).
\end{equation*}
The associated global and local seminorms are, respectively,
\begin{equation} \label{eq:j-seminorm}
  \text{%
    $\seminorm{\alpha,h}{\vh} \coloneqq  j_{\alpha,h}(\vh,\vh)^{\frac12}$
    and
    $\seminorm{\alpha,T}{\underline{v}_T} \coloneqq j_{\alpha,T}(\underline{v}_T,\underline{v}_T)^{\frac12}$
    for all $T \in \Th$.
  }
\end{equation}

\subsection{Discrete problem}

In what follows, to simplify the notation, we omit the dependence on time $t$ whenever it can be inferred from the context. Let two families $\beta \coloneqq (\beta_T)_{T \in \Th}$ and $\gamma \coloneqq (\gamma_T)_{T \in \Th}$ of functions of time
$\beta_T,\gamma_T \in C^0([0,\tF];\Real^+_*)$
be given. The semi-discrete counterpart of problem~\eqref{eq:weak} reads:
Find $(\underline{u}_h , \underline{b}_h,\ph,\underline{r}_h) \in [C^{1}([0,\tF];\Ub)]^2 \times [C^{0}(\lparen 0,\tF\rbrack;\Pb)]^2$ such that
\begin{subequations} \label{eq:discrete}
  \begin{equation}\label{eq:discrete:ic}
    \uh(0) = \Iu u_0,\qquad
    \bh(0) = \Iu b_0
  \end{equation}
  and, for all $t \in \lparen 0, \tF\rbrack$,
  \begin{alignat}{4} \label{discreteaRT3}
    \begin{aligned}[b]
      (\partial^{}_{t} \uh,\vh)_{0,h} + \nu a_h(\uh,\vh)  {}& +  \th(\uh,\uh,\vh) + j_{ {\beta},h}(\uh,\vh)\\
      {}& - \th(\bh,\bh, \vh) + B_h(\vh,\ph)
    \end{aligned}
    &= \int_\Omega f \cdot v_h
    &\quad& \forall  \vh \in  \Ub,
    \\ \label{discretebRT3}
    \begin{aligned}[b]
      (\partial^{}_{t} \bh, \wh)_{0,h} +\mu a_{h}(\bh, \wh) {}& + \th(\uh,\bh,\wh) + j_{\gamma,h}(\bh,\wh)
      \\
        {}& - \th(\bh,\uh, \wh) + B_h(\wh,\underline{r}_h)
    \end{aligned}
    &= \int_\Omega g \cdot w_h
    &\quad& \forall  \wh \in  \Ub,
    \\ \label{discrete:mass.gauge}
    B_h( \uh, \qh)
    = B_h( \bh, \underline{q}_h)
    &= 0
    &\quad& \forall \underline{q}_h \in  \Pb.
  \end{alignat}
\end{subequations}
Notice that an equivalent formulation without the pressures is classically obtained by taking the test functions in the divergence-free subspace~\eqref{eq:ZhZ} of $\Ub$, leading to the following problem:
Find $(\underline{u}_h , \underline{b}_h) \in [C^{1}([0,\tF];\Zh)]^2$ verifying~\eqref{eq:discrete:ic} such that, for all $(\vh, \wh) \in [ \Zh ]^2$,
\begin{subequations}\label{eq:discrete.bis}
  \begin{align}\label{eq:discrete.bis:momentum}
    (\partial^{}_{t} \uh,\vh)_{0,h}
    + \nu a_h(\uh,\vh)
    + \th(\uh,\uh,\vh)
    + j_{ {\beta},h}(\uh,\vh)
    - \th(\bh,\bh, \vh)
    &= \int_\Omega f \cdot v_h,
    \\ \label{eq:discrete.bis:maxwell}
    (\partial^{}_{t} \bh, \wh)_{0,h}
    + \mu a_{h}(\bh, \wh)
    + \th(\uh,\bh,\wh)
    + j_{\gamma,h}(\bh,\wh)
    - \th(\bh,\uh, \wh)
    &= \int_\Omega g \cdot w_h.
  \end{align}
\end{subequations}
The pressure $\underline{p}_h$ and the magnetic pressure $\underline{r}_h$ can then be recovered from~\eqref{discreteaRT3} and~\eqref{discretebRT3}, respectively, using test functions in $\Ub$; their uniqueness is guaranteed by the inf-sup condition~\eqref{eq:inf-sup}.

\subsection{Main results}\label{sec:discrete.problem:main.results}

We state here the main results of the analysis of the scheme, whose proofs are deferred to Section~\ref{sec:analysis}.
The natural energy norm on the discrete velocity and magnetic fields is the mapping $\tnorm{(\cdot,\cdot)}:[C^{0}([0,\tF];\Ub)]^2\to \Real$ defined, for all $(\vh,\wh) \in [C^{0}([0,\tF];\Ub)]^2$, by
\begin{equation}\label{eq:energy.norm}
  \begin{aligned}
    \tnorm{(\vh,\wh)}^2
    &\coloneqq
    \max_{t\in [0,\tF]} \| \vh(t) \|^2_{0,h}
    + \int^{\tF}_{0}\Big{(} \nu \| \vh(\tau) \|^2_{1,h} + | \vh(\tau) |^2_{ {\beta},h} \Big{)}
    \, d\tau
    \\
    &\quad
    + \max_{t\in [0,\tF]} \| \wh(t) \|^2_{0,h}
    +  \int^{\tF}_{0}\Big{(}  \mu \norm{1,h}{\wh(\tau)}^2   + | \wh(\tau) |^2_{\gamma,h} \Big{)} \, d\tau.
  \end{aligned}
\end{equation}

\begin{theorem}[Well-posedness of the scheme]\label{thm:well-posedness}
  There exists a unique solution to problem~\eqref{eq:discrete} (or, equivalently,~\eqref{eq:discrete.bis}), which additionally satisfies
  \begin{equation}\label{eq:energy_bound}
    \tnorm{(\uh,\bh)}^2 \lesssim
    e^{C\tF}\left[
      \norm{L^2(0,\tF;L^2(\Omega)^d)}{f}^2
      + \norm{L^2(0,\tF;L^2(\Omega)^d)}{g}^2
      + \seminorm{H^1(\Omega)^d}{u_0}^2
      + \seminorm{H^1(\Omega)^d}{b_0}^2
      \right].
  \end{equation}
  Here, $C>0$ depends only on the mesh regularity parameter and the polynomial degree $k$; in particular, it is independent of $h$, $\nu$, $\mu$, $\beta$, and $\gamma$.
\end{theorem}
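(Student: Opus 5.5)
The plan is to work with the pressure-free formulation~\eqref{eq:discrete.bis}, which is a system of ODEs on the finite-dimensional space $[\Zh]^2$. By Remark~\ref{rem:interpolate.div.free.functions}, the initial data $\Iu u_0,\Iu b_0$ lie in $\Zh$ because $u_0,b_0\in H^1_0(\Omega)^d$ are divergence-free. Fix a basis of $\Zh$. The mass matrix associated with $(\cdot,\cdot)_{0,h}$ is symmetric positive definite, since $\norm{0,h}{\cdot}$ is a norm on $\U$ (the face terms carry positive weights $h_T$). Hence~\eqref{eq:discrete.bis} can be written as $M\dot X = \Phi(t,X)$ with $\Phi$ continuous in $t$ (because $f,g,\beta_T,\gamma_T$ are continuous) and polynomial, hence locally Lipschitz, in $X$. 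The Cauchy--Lipschitz theorem gives a unique maximal solution. Global existence on $[0,\tF]$ will follow once the a priori bound~\eqref{eq:energy_bound} shows that $\norm{0,h}{\uh(t)}+\norm{0,h}{\bh(t)}$ cannot blow up. The pressures $\ph,\underline{r}_h$ are then recovered, and are unique, from~\eqref{discreteaRT3}--\eqref{discretebRT3} by the inf-sup condition~\eqref{eq:inf-sup}: the residual functional vanishes on $\Zh$, so it lies in the range of $B_h(\cdot,\underline{q}_h)$. Continuity in time of the pressures follows from that of the residual.

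For the energy estimate, test~\eqref{eq:discrete.bis:momentum} with $\vh=\uh$ and~\eqref{eq:discrete.bis:maxwell} with $\wh=\bh$, then add. Since $\uh(t)\in\Zh$, non-dissipativity~\eqref{eq:non-dissipativity} gives $\th(\uh,\uh,\uh)=\th(\uh,\bh,\bh)=0$. The key cancellation is between the two coupling terms $-\th(\bh,\bh,\uh)$ and $-\th(\bh,\uh,\bh)$. Because $\bh\in\Zh$ and $\uh,\bh\in\Ub$, the skew-symmetry~\eqref{eq:skew-simmetry} gives $\th(\bh,\bh,\uh)=-\th(\bh,\uh,\bh)$, so their sum vanishes. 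This is the step where the discrete solenoidality of $\bh$ is essential. Using~\eqref{eq:ah:norm.equivalence}, we obtain
\[
\frac12\frac{d}{dt}\big(\norm{0,h}{\uh}^2+\norm{0,h}{\bh}^2\big)+\nu\norm{1,h}{\uh}^2+\mu\norm{1,h}{\bh}^2+\seminorm{\beta,h}{\uh}^2+\seminorm{\gamma,h}{\bh}^2\lesssim \int_\Omega f\cdot u_h+\int_\Omega g\cdot b_h .
\]
Next, bound the right-hand side by Cauchy--Schwarz and Young with $\norm{L^2(\Omega)^d}{u_h}\le\norm{0,h}{\uh}$, which gives $\frac12(\norm{f}^2+\norm{0,h}{\uh}^2)$ and similarly for $g$. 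This avoids any use of $\nu^{-1}$ or $\mu^{-1}$. Integrate in time and apply Gronwall's lemma; this produces the factor $e^{C\tF}$ with $C$ independent of $h,\nu,\mu,\beta,\gamma$. Taking the maximum over $t$ controls all terms of $\tnorm{(\uh,\bh)}^2$.

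It remains to bound the initial data, $\norm{0,h}{\Iu u_0}\lesssim\seminorm{H^1(\Omega)^d}{u_0}$, and the same for $b_0$. Split into element and face terms. By~\eqref{eq:RTN_bound} with $q=m=0$, $\norm{L^2(T)}{\Irt u_0}\le\norm{L^2(T)}{u_0}+h_T\seminorm{H^1(T)}{u_0}$. For the face terms, $h_T\norm{L^2(F)}{\lproj{k}{F}u_0-\Irt u_0}^2$ is handled by inserting $u_0$: the contribution $h_T\norm{L^2(F)}{u_0-\Irt u_0}^2$ is controlled by~\eqref{eq:RTN_bound_F} with $p=2,q=0$ as $h_T^2\seminorm{H^1(T)}{u_0}^2$, and $h_T\norm{L^2(F)}{u_0-\lproj{k}{F}u_0}^2$ is controlled by $L^2(F)$-stability of the projector together with a continuous trace inequality. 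Summing over $T$ gives $\norm{0,h}{\Iu u_0}\lesssim\norm{L^2(\Omega)}{u_0}+h\seminorm{H^1(\Omega)}{u_0}$, and the Poincaré inequality on $H^1_0(\Omega)$ converts this to $\seminorm{H^1(\Omega)^d}{u_0}$, at the cost of a constant depending on $\Omega$ (which is allowed). I expect no real obstacle here. The only delicate point in the whole argument is recognising that the magnetic coupling terms cancel, which requires $\bh\in\Zh$ in order to invoke~\eqref{eq:skew-simmetry}.
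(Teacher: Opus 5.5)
Your proposal is correct and follows essentially the same route as the paper: Cauchy--Lipschitz applied to the pressure-free system~\eqref{eq:discrete.bis}, continued up to $\tF$ via the a priori bound, then testing with $(\uh,\bh)$ and using non-dissipativity and skew-symmetry of $\th$ to remove the convective and magnetic coupling terms, followed by Young, Gronwall, and an $L^2$-like bound on $\Iu u_0,\Iu b_0$ combined with Poincar\'e's inequality. The only deviation is inessential: you handle the face terms of $\norm{0,h}{\Iu u_0}$ through the $L^2(F)$-stability of $\lproj{k}{F}$ and a continuous trace inequality, whereas the paper's interpolator lemma inserts $\lproj{k}{T}u_0$; your version suffices here, since only the $\norm{0,h}{\cdot}$ bound is needed.
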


\begin{proof}
  See Section~\ref{sec:analysis:well-posedness}.
\end{proof}

The error analysis is carried out in the spirit of \cite{Di-Pietro.Droniou:18}, and the regularity requirements on the exact solution will be formulated in terms of the broken Sobolev spaces
\[
W^{s,p}(\Th) \coloneqq \left\{
q \in L^p(\Omega) \st \text{%
  $q_{|T} \in W^{s,p}(T)$ for all $T \in \Th$
}
\right\}
\]
as well as the broken Hilbert spaces $H^s(\Th) \coloneqq W^{s,2}(\Th)$.

Recalling Remark~\ref{rem:interpolate.div.free.functions}, we define the errors on the velocity and magnetic field, respectively, as
\begin{equation}\label{eq:approximation.errors}
  \eu \coloneqq \uh - \Iu u\in\Zh,\qquad
  \eb \coloneqq \bh - \Iu b\in\Zh.
\end{equation}
In Theorem~\ref{thm:error.estimate} below, we establish a basic estimate where the quantity $\tnorm{(\eu,\eb)}$ is bounded in terms of the following consistency errors:
\begin{itemize}
\item The \emph{time consistency error} which, given $w \in C^1([0,\tF]; H^1(\Omega)^d)$, is such that
  \begin{equation} \label{eq:E.time.h}
    \Err{\rm time}(w,\underline{z}_h)
    \coloneqq \int_\Omega\partial_t w \cdot z_h
    - (\partial^{}_{t} \Iu w,\underline{z}_h)_{0,h}
    \qquad \forall \underline{z}_h \in \Ub;
  \end{equation}

\item The \emph{diffusive consistency error} which, given $w \in H_0^1(\Omega)^d \cap H^2(\Th)^d$, is such that
  \begin{equation}\label{eq:E.diff.h}
    \Err{\rm diff}(w, \underline{z}_h)
    \coloneqq
    - \int_\Omega \Delta w \cdot z_h
    - a_h(\Iu w,\underline{z}_h)
    \qquad \forall \underline{z}_h \in \Ub;
  \end{equation}

\item The \emph{convective consistency error} such that, for all $(w,v) \in H_0^1(\Omega)^d \times H_0^1(\Omega)^d$,
  \begin{equation}\label{eq:E.conv.h}
    \Err{\rm conv}(w,v,\underline{z}_h) \coloneqq  \int_\Omega (w \cdot  \grad) v \cdot  {z}_h
    - \th(\Iu w,\Iu v, \underline{z}_h)
    \qquad \forall \underline{z}_h \in \Ub.
  \end{equation}
\end{itemize}

\begin{theorem}[Basic error estimate]\label{thm:error.estimate}
  Let the solution of \eqref{eq:weak} be such that $u,b \in H^{1}(0,\tF; H^{1}_{0}(\Omega)^d) \cap L^{1}(0,\tF; W^{1,\infty}(\Omega)^d) \cap L^2(0,\tF; H^2(\Th)^d)$ and $p,r \in L^2(0,\tF;L^2_0(\Omega) \cap H^1(\Omega))$.
  Define the global consistency error, for all $(\vh,\wh) \in \Ub \times \Ub$, by
  \begin{equation}\label{eq:Eh}
    \begin{aligned}
      \mathcal{E}_h(\vh,\wh)
      &\coloneqq
      \Err{\rm time}(u,\vh)
      + \Err{\rm time}(b,\wh)
      + \nu \Err{\rm diff}(u,\vh) + \mu\Err{\rm diff}(b,\wh)
      \\
      &\quad
      + \Err{\rm conv}(u,u,\vh)
      - j_{ {\beta},h}(\IUh u,\vh)
      - \Err{\rm conv}(b,b,\vh)
      \\
      &\quad
      + \Err{\rm conv}(u,b,\wh)
      - j_{ {\gamma},h}(\IUh b,\wh)
      - \Err{\rm conv}(b,u,\wh).
    \end{aligned}
  \end{equation}
  Notice that, as $u$ and $b$ are functions of time, $\mathcal{E}_h(\vh,\wh)$ is a function of time as well.
  Then, recalling the definition~\eqref{eq:energy.norm} of the energy norm, we have
  \begin{equation}\label{eq:basic.estimate}
    \tnorm{(\eu,\eb)}^2
    \lesssim K(u,b) \int^{\tF}_{0}
    \left| \mathcal{E}_h(\eu(\tau),\eb(\tau)) \right|
    \, d\tau,
  \end{equation}
  where
  $K(u,b) \coloneqq \exp\left( C \int^{\tF}_{0} \big( | u(\tau) |_{ {W}^{1,\infty}(\Omega)^d} + | b(\tau) |_{ {W}^{1,\infty}(\Omega)^d} \big) \, d\tau \right)$
  for a real number $C>0$ that only depends on the mesh regularity parameter and on the polynomial degree $k$.
\end{theorem}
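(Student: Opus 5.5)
We will derive an error equation, test it with the errors themselves, and close with Gronwall.

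\emph{Error equation.} Since $\eu,\eb\in\Zh$ (Remark~\ref{rem:interpolate.div.free.functions}), we use the pressure-free formulation~\eqref{eq:discrete.bis}. For $(\vh,\wh)\in[\Zh]^2$, we subtract from~\eqref{eq:discrete.bis} the terms obtained by inserting $\Iu u,\Iu b$ in place of $\uh,\bh$. To evaluate the right-hand sides, we test the strong form~\eqref{eq:modified} with $v_h$ and $w_h$. Because $v_h\in H(\operatorname{div};\Omega)$ with $\divergence v_h=0$ and $v_h\cdot n=0$ on $\partial\Omega$ by~\eqref{eq:vh.zero.div}, we have $\int_\Omega\nabla p\cdot v_h=0$, and likewise for $r$; this is where the $H^1$ regularity of $p,r$ is used. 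Hence $\int_\Omega f\cdot v_h=\int_\Omega(\partial_t u-\nu\Delta u+(u\cdot\nabla)u-(b\cdot\nabla)b)\cdot v_h$. Using the definitions~\eqref{eq:E.time.h}--\eqref{eq:E.conv.h}, we expect
\[
(\partial_t\eu,\vh)_{0,h}+\nu a_h(\eu,\vh)+j_{\beta,h}(\eu,\vh)+(\partial_t\eb,\wh)_{0,h}+\mu a_h(\eb,\wh)+j_{\gamma,h}(\eb,\wh)+\mathfrak{N}=\mathcal{E}_h(\vh,\wh),
\]
where $\mathfrak{N}$ collects the nonlinear differences $\th(\uh,\uh,\vh)-\th(\Iu u,\Iu u,\vh)$, and similarly for the other three convective pairs.

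\emph{Nonlinear terms.} We take $\vh=\eu$ and $\wh=\eb$ and split each difference by writing $\uh=\Iu u+\eu$ and $\bh=\Iu b+\eb$. Terms of the form $\th(\uh,\eu,\eu)$ and $\th(\uh,\eb,\eb)$ vanish by non-dissipativity~\eqref{eq:non-dissipativity}, since $\uh\in\Zh$. The crucial cancellation concerns the Lorentz/stretching coupling: $-\th(\bh,\eb,\eu)$ from the momentum equation and $-\th(\bh,\eu,\eb)$ from the induction equation cancel by the skew-symmetry~\eqref{eq:skew-simmetry}, since $\bh\in\Zh$ and $\eu,\eb\in\Ub$. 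The remaining terms have an interpolant in the second argument, for instance $\th(\eu,\Iu u,\eu)$, $\th(\eb,\Iu b,\eu)$, $\th(\eu,\Iu b,\eb)$, and $\th(\eb,\Iu u,\eb)$. By~\eqref{thbound.0h} and~\eqref{eq:W.1.infty.boundedness.IUh}, each of these is bounded by $(|u|_{W^{1,\infty}}+|b|_{W^{1,\infty}})(\norm{0,h}{\eu}^2+\norm{0,h}{\eb}^2)$. The main care is to organize the expansion so that every term either cancels or has $\Iu u$ or $\Iu b$ in the middle slot; getting this bookkeeping right is the main obstacle.

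\emph{Conclusion.} Using~\eqref{eq:ah:norm.equivalence}, we obtain
\[
\tfrac12\tfrac{d}{dt}\big(\norm{0,h}{\eu}^2+\norm{0,h}{\eb}^2\big)+\nu\norm{1,h}{\eu}^2+\seminorm{\beta,h}{\eu}^2+\mu\norm{1,h}{\eb}^2+\seminorm{\gamma,h}{\eb}^2
\lesssim \phi(t)\big(\norm{0,h}{\eu}^2+\norm{0,h}{\eb}^2\big)+|\mathcal{E}_h(\eu,\eb)|,
\]
with $\phi(t)=|u(t)|_{W^{1,\infty}(\Omega)^d}+|b(t)|_{W^{1,\infty}(\Omega)^d}$. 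By~\eqref{eq:discrete:ic}, the initial errors vanish. We integrate over $(0,t)$ and apply Gronwall's lemma, which gives the factor $K(u,b)$. Finally, taking the maximum over $t$ of the $L^2$ parts and bounding the integrated dissipative parts by the same right-hand side yields~\eqref{eq:basic.estimate}. The constant depends only on the constants in~\eqref{thbound.0h},~\eqref{eq:W.1.infty.boundedness.IUh} and~\eqref{eq:ah:norm.equivalence}, that is, on the mesh regularity parameter and on $k$.
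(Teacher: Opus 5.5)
Your proposal is correct and follows essentially the same route as the paper. You derive the error equation by subtracting the interpolated scheme and removing the pressures via \eqref{eq:vh.zero.div}, then test with $(\eu,\eb)$ and split the nonlinear differences: $\th(\uh,\eu,\eu)$ and $\th(\uh,\eb,\eb)$ vanish by non-dissipativity, and $\th(\bh,\eb,\eu)+\th(\bh,\eu,\eb)$ cancels by skew-symmetry. Finally you bound the four remaining terms with \eqref{thbound.0h} and \eqref{eq:W.1.infty.boundedness.IUh} and close with Gronwall. The bookkeeping you flag as the main obstacle is exactly what the paper does by adding and subtracting $\th(\uh,\Iu u,\eu)$, $\th(\uh,\Iu b,\eb)$, $\th(\bh,\Iu b,\eu)$ and $\th(\bh,\Iu u,\eb)$, and your signs agree with the paper's.
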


\begin{proof}
  See Section~\ref{sec:analysis:error.estimate}.
\end{proof}

For all $\eta>0$,
all $w \in L^{\infty}(\Omega)^d$,
all $\alpha \coloneqq (\alpha_T)_{T \in \Th}$ strictly positive real numbers,
and all $T \in \Th$, we define the local P\'eclet number
\begin{equation}\label{eq:PeT}
  \Pe_T(\eta,w,\alpha_T)
  \coloneqq \frac{\left(\alpha_T + \| w \|_{ L^{\infty}(T)^d}\right) h_T}{\eta}
\end{equation}
and the real number
\begin{equation}\label{eq:chi.alpha}
  \chi(\eta,w,\alpha)
  \coloneqq \max_{T \in \Th,\, \Pe_T(\eta,w,\alpha_T)>1} \frac{\norm{L^\infty(T)^d}{w}}{\alpha_T},
\end{equation}
with the convention that $\chi(\eta,w,\alpha) = 0$ if the set over which the maximum is taken is empty.

The convergence rates stated in the following corollary are obtained by estimating the consistency errors that appear in the right-hand side of \eqref{eq:Eh} for smooth-enough solutions.

\begin{corollary}[Convergence rates for smooth solutions]\label{cor:convergence.rates}
  Let the assumptions and notations of Theorem~\ref{thm:error.estimate} hold,
  further assume $u, b \in L^2(0,\tF; H^{k+2}(\mathcal{T}_h)^d) \cap C^0([0,\tF]; W^{1,\infty}(\Omega)^d)$
  and $ \frac{d u}{dt}, \frac{db}{dt} \in L^2(0,\tF; H^{k+1}(\mathcal{T}_h)^d)$,
  and let
  \begin{align}
    \mathfrak{P}_T &\coloneqq
      \max\Big\{
      \norm{L^\infty(0,\tF)}{\Pe_T(\nu, u, \beta_T)},
      \norm{L^\infty(0,\tF)}{\Pe_T(\mu, u, \gamma_T)},\nonumber\\
      &\qquad\qquad\qquad\norm{L^\infty(0,\tF)}{\Pe_T(\nu, b, \beta_T)},
      \norm{L^\infty(0,\tF)}{\Pe_T(\mu, b, \gamma_T)}
      \Big\},
      \label{eq:frak.PT}
    \\ \nonumber
    \Theta &\coloneqq \max\left\{
    \norm{L^{\infty}(0,\tF)}{\chi(\nu,u,\beta)},
    \norm{L^{\infty}(0,\tF)}{\chi(\nu,b,\beta)},
    \norm{L^{\infty}(0,\tF)}{\chi(\mu,u,\gamma)},
    \norm{L^{\infty}(0,\tF)}{\chi(\mu,b,\gamma)}
    \right\}.
  \end{align}
  The continuity and strict positivity of $\beta_T$ and $\gamma_T$ on $[0,\tF]$, together with the assumed continuity of $u$ and $b$ in $W^{1,\infty}(\Omega)^d$, ensure that $\mathfrak{P}_T$ and $\Theta$ are finite.
  Then, with a hidden constant independent of $h$, $\nu$, $\mu$, $\beta$, and $\gamma$,
  \begin{equation}\label{eq:convergence.rate}
    \tnorm{(\eu,\eb)}^2
    \lesssim K(u,b) \sum_{T \in \Th} \left[
      h_T^{2(k+1)} \, \mathcal{N}_1(T)
      + h_T^{2k+1} \min(1,\mathfrak{P}_T) \, \mathcal{N}_2(T)
      \right],
  \end{equation}
  where
  \[
  \begin{aligned}
    \mathcal{N}_1(T)
    &\coloneqq
    \int^{\tF}_{0} \left(
    \Seminorm{H^{k+1}(T)^d}{\frac{d u}{dt}(t)}^2
    + \Seminorm{H^{k+1}(T)^d}{\frac{d b}{dt}(t)}^{2}
    \right) dt
    \\
    &\quad
    +\int^{\tF}_{0} \left(
    \nu \seminorm{H^{k+2}(T)^d}{u(t)}^2
    + \mu \seminorm{H^{k+2}(T)^d}{b(t)}^2
    \right) dt
    \\
    &\quad
    +\int^{\tF}_{0}
    \left(
    \seminorm{W^{1,\infty}(T)^d}{u(t)}^2
    + \seminorm{W^{1,\infty}(T)^d}{b(t)}^2
    \right)
    \left(
    \seminorm{H^{k+1}(T)^d}{u(t)}^2
    + \seminorm{H^{k+1}(T)^d}{b(t)}^2
    \right) dt,
    \\
    \mathcal{N}_2(T)
    &\coloneqq
    (1 + \Theta) \int^{\tF}_{0}
    \begin{aligned}[t]
      &\left(
      \norm{L^{\infty}(T)^d}{u(t)}
      + \beta_T(t)
      + \norm{L^{\infty}(T)^d}{b(t)}
      + \gamma_T(t)
      \right)
      \\
      &\times
      \left(
      \seminorm{H^{k+1}(T)^d}{u(t)}^2
      + \seminorm{H^{k+1}(T)^d}{b(t)}^2
      \right) dt.
    \end{aligned}
  \end{aligned}
  \]
\end{corollary}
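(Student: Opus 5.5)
The plan is to start from the basic estimate \eqref{eq:basic.estimate} of Theorem~\ref{thm:error.estimate}. We bound each term of $\mathcal{E}_h(\eu,\eb)$ in \eqref{eq:Eh} by a sum over $T\in\Th$ of local quantities times local norms of $(\eu,\eb)$. We then use Young's inequality to absorb these norms into $\tnorm{(\eu,\eb)}^2$. The terms are treated in three groups: time, diffusion, and convection plus upwinding.

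\textbf{Time term.} Since $(\cdot,\cdot)_{0,h}$ involves face differences $w_F-w_T$, we write
\[
\Err{\rm time}(u,\vh)=\sum_T\Big[\int_T(\partial_t u-\Irt\partial_t u)\cdot v_T - h_T\sum_{F}\int_F(\lproj{k}{F}\partial_tu-\Irt\partial_tu)\cdot(v_F-v_T)\Big].
\]
Interpolation and time derivative commute. By \eqref{eq:RTN_bound}--\eqref{eq:RTN_bound_F} with $\ell=k+1$ and $q=k$, this is bounded by $h_T^{k+1}|\partial_tu|_{H^{k+1}(T)}\norm{0,T}{\underline{v}_T}$. Integrating in time, we bound $\int_0^{\tF}\norm{0,h}{\eu}$ by $\tF^{1/2}\max_t\norm{0,h}{\eu}$ and absorb it. This gives the $\frac{du}{dt}$ contribution to $\mathcal N_1$.

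\textbf{Diffusion term.} We use the standard HHO consistency for $\pT$ composed with $\IUT$. We need that $\pT\IUT w$ coincides with the elliptic projection of $w$, which holds because $\lproj{k-1}{T}\Irt w=\lproj{k-1}{T}w$ and the face components are $L^2$-projections. This gives $|\Err{\rm diff}(w,\underline{z}_h)|\lesssim(\sum_T h_T^{2(k+1)}|w|^2_{H^{k+2}(T)})^{1/2}\norm{1,h}{\underline{z}_h}$. The factor $\nu$ is split as $\nu^{1/2}\cdot\nu^{1/2}$, so the result is absorbed by $\nu\norm{1,h}{\eu}^2$. This yields $\nu|u|_{H^{k+2}}^2$ in $\mathcal N_1$.

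\textbf{Convection and upwinding (main obstacle).} This is the regime-dependent part. For each of the four convective errors, with $(w,v)\in\{u,b\}^2$ and $\underline z_h\in\{\eu,\eb\}$ paired with diffusivity $\eta\in\{\nu,\mu\}$ and stabilization $\alpha\in\{\beta,\gamma\}$, we proceed as follows.
\begin{itemize}
\item We split $\Err{\rm conv}(w,v,\underline z_h)$ into two parts. The first is $\int(w\cdot\nabla)v\cdot z_h-\th(w\text{-frozen},\Iu v,\underline z_h)$. The second is $\th(w-\Iu w,\ldots)$, bounded via \eqref{thbound.0h} and \eqref{eq:W.1.infty.boundedness.IUh} by $h^{k+1}|w|_{H^{k+1}}|v|_{W^{1,\infty}}\norm{0,h}{\underline z_h}$. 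This gives the mixed $W^{1,\infty}\times H^{k+1}$ term of $\mathcal N_1$.
\item In the first part, we integrate by parts elementwise. Using $\divergence w=0$ and the normal continuity of $z_h$ (Remark~\ref{rem:div-free-subspace}), the remaining terms reduce to volume terms involving $v-\Irt v$, handled by projection orthogonality plus a $W^{1,\infty}$ bound on $w$, and face terms of the form $\int_F (w\cdot n_{TF})(v-\Iu v\text{ traces})\cdot(z_F-z_T)$.
\item Each face term admits two estimates. In the convection-dominated case $\Pe_T>1$, we use $\norm{L^\infty(T)}{w}\le\chi\,\alpha_T$ and bound it by $(\alpha_T+\|w\|_{L^\infty(T)})^{1/2}h_T^{k+1/2}|v|_{H^{k+1}(T)}\,(1+\Theta)^{1/2}|\underline z_T|_{\alpha,T}$. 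In the diffusion-dominated case $\Pe_T\le1$, we bound it by $\|w\|_{L^\infty}h_T^{1/2}\cdot h_T^{k+1/2}|v|\cdot h_T^{-1/2}\|z_F-z_T\|$, then use $\|w\|_{L^\infty}h_T\le\eta\,\Pe_T$ and absorb via $\eta\norm{1,T}{\underline z_T}^2$. This produces the factor $\mathfrak P_T$.
\item Combining the two cases with Young's inequality gives $h_T^{2k+1}\min(1,\mathfrak P_T)\mathcal N_2(T)$.
\item The upwinding terms $j_{\alpha,h}(\IUh u,\underline z_h)$ are treated the same way. They are bounded by $|\IUh u|_{\alpha,T}|\underline z_T|_{\alpha,T}$, with $|\IUh u|^2_{\alpha,T}\lesssim\alpha_T h_T^{2k+1}|u|^2_{H^{k+1}(T)}$ for large Péclet. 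For small Péclet we instead use $\alpha_Th_T\le\eta\,\Pe_T$ and absorb into the $\norm{1,h}$ norm.
\end{itemize}

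The delicate point is arranging the integration by parts so that the remaining face terms only see $z_F-z_T$. Only these are controlled by the $|\cdot|_{\alpha,h}$ and $\norm{1,h}{\cdot}$ seminorms. This requires $z_h\cdot n$ continuity together with the $\frac12(z_F+z_T)$ structure of $t_T$.

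Summing all contributions and absorbing the error-norm factors into the left-hand side yields \eqref{eq:convergence.rate}.
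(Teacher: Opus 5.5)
Your plan is sound and follows the paper's overall structure. You bound each group of terms of $\mathcal{E}_h(\eu,\eb)$ and insert the bounds into \eqref{eq:basic.estimate}. You split the convective and upwinding contributions by regime: for $\Pe_T>1$ you use $\|w\|_{L^\infty(T)}\le\chi\,\alpha_T$ with $\chi\le\Theta$, and for $\Pe_T\le 1$ you use $\|w\|_{L^\infty(T)}h_T\le\eta\,\Pe_T$. You pair $(\nu,\beta)$ with $\eu$ and $(\mu,\gamma)$ with $\eb$, and then absorb; the resulting $K(u,b)^2$ is again of the form $K(u,b)$. Your time and diffusion sketches reproduce what the paper simply imports as \eqref{eq:estimate.Err.time} and \eqref{eq:estimate.Err.diff.h}.

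The real difference is in the convective consistency error. You first replace the advecting field $\Irt w$ by $w$, paying $\th(w-\Iu w,\Iu v,\underline z_h)$, and then integrate by parts using $\divergence w=0$. The paper instead keeps $\widehat w_T=\Irt w$ and writes $\frac12(z_F+z_T)=z_T+\frac12(z_F-z_T)$. It then removes the $z_T$-part by subtracting an exact identity built with the elementwise constant $\overline w_T=\lproj{0}{T}\widehat w_T$. In that route everything stays local to $T$, and the face term $\term_2$ carries $\widehat w_T\cdot n_{TF}=\lproj{k}{F}(w\cdot n_{TF})$, controlled via the $L^\infty$-stability of $\lproj{k}{F}$. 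Your version has the more transparent face factor $w\cdot n_{TF}$, at the cost of one extra term.

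Two points of your sketch need correcting, though neither breaks the argument.

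First, normal continuity of $z_h$ is not what reduces the face terms to $z_F-z_T$. After the integration by parts you are left with $\sum_{T}\sum_{F\in\FT}\int_F (w\cdot n_{TF})(v-\lproj{k}{F}v)\cdot z_T$, which involves all components of $z_T$. You remove it by writing $z_T=z_F-(z_F-z_T)$ and using that $z_F$ is single-valued and vanishes on $\Fhb$, while $w$ and $v$ are continuous. Alternatively, you can do it locally via $\int_F(\lproj{0}{T}w\cdot n_{TF})(v-\lproj{k}{F}v)\cdot z_T=0$. What you really need from Remark~\ref{rem:div-free-subspace} is $z_T\in\Poly{k}(T)^d$. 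It ensures $(\lproj{0}{T}w\cdot\nabla)z_T\in\Poly{k-1}(T)^d$; without it the orthogonality of $v-\Irt v$ is lost and the volume term is only $O(h^k)$.

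Second, \eqref{thbound.0h} is stated only for discrete first arguments, since its proof uses a discrete trace inequality on $w_T$. Combined with the global bound \eqref{eq:W.1.infty.boundedness.IUh}, it would also give $|v|_{W^{1,\infty}(\Omega)^d}$ rather than the local seminorms in $\mathcal{N}_1(T)$. Instead, bound $\th(w-\Iu w,\Iu v,\underline z_h)$ elementwise. Use \eqref{eq:RTN_bound_F} for $w-\Irt w$ on faces, together with $\norm{1,\infty,T}{\IUT v}\lesssim|v|_{W^{1,\infty}(T)^d}$.
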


\begin{proof}
  See Section~\ref{sec:analysis:convergence.rates}.
\end{proof}

\begin{remark}[Reynolds- and Hartmann-semi-robustness]
 The estimate~\eqref{eq:convergence.rate} is robust with respect to all the dimensionless numbers entering the definition~\eqref{eq:frak.PT} of $\mathfrak{P}_T$. The two quantities involving $u$ are Reynolds-type numbers, whereas the two quantities involving $b$ jointly characterize a Hartmann-type regime. Indeed, when the stabilization contributions are negligible, the geometric mean of the latter two reduces to $\norm{L^\infty(T)^d}{b}h_T/\sqrt{\nu\mu}$, which has the form of a local Hartmann number.
\end{remark}

\begin{remark}[Convergence rate] \label{rm:convergence_rate}
  In~\eqref{eq:convergence.rate}, the local contribution to the error from a mesh element $T \in \Th$ is of order $h_T^{k+\frac12}$ if $\mathfrak{P}_T \ge 1$,
  $h_T^{k+1}$ if $\mathfrak{P}_T \le h_T$,
  while intermediate orders of convergence are observed when $h_T < \mathfrak{P}_T < 1$.
  This means that, in the (pre-asymptotic) convection-dominated regime, an order of convergence of $h^{k+\frac12}$ is expected, and that this order increases up to $h^{k+1}$ in the (asymptotic) diffusion-dominated regime.
  Unlike other methods, the accuracy therefore increases as the mesh is refined, since the discretization of the diffusive terms is of higher-order than that of the convective terms.
\end{remark}

\begin{remark}[Non-convex domains]
  When dealing with non-convex domains, the embedding $H(\operatorname{curl};\Omega) \cap H_0(\operatorname{div};\Omega) \hookrightarrow H^1(\Omega)^3$ no longer holds, and the solution of the weak formulation \eqref{eq:weak} does not, in general, solve the strong problem \eqref{eq:strong} almost everywhere.
  As a result, while the scheme remains stable, one may observe convergence to solutions that do not make sense from a physical standpoint.
  An example of this phenomenon is provided in \cite[Chapter 1]{Arnold:18}, where the eigenvalue problem for the vector Laplacian with magnetic boundary conditions is considered.
  To restore convergence to the correct solution, one could, e.g., seek the magnetic field $b$ in an $H(\operatorname{curl}; \Omega)$-conforming (e.g., N\'ed\'elec-type) finite element space.
  Achieving Reynolds- and Hartmann-robustness in this case does not appear to be straightforward, particularly because property \eqref{thbound} is not easy to replicate with curl-based formulations of the convective term.
  This subject is currently under investigation and will be the subject of future work.
\end{remark}


\section{Numerical results}\label{sec:numerical.results}

In this section, we numerically verify the theoretical results.
We test the method on successively refined simplicial mesh families and with polynomial degrees $k \in \left\{0,1,2\right\}$.
Our implementation is based on the \texttt{HArDCore} library\footnote{\url{https://github.com/jdroniou/HArDCore}} and makes extensive use of the linear algebra \texttt{Eigen} open-source library~\cite{Guennebaud.Jacob.ea:10}.
In all the numerical results of this section, we have fixed $\tF=1$. The time discretization is performed using an implicit Crank--Nicolson method, with the number of time steps set according to the following formula:
\begin{equation*}
  N_{\tF} = \max\left\{ 10, \left\lceil h^{-\frac{k+1}{2}} \right\rceil \right\},
\end{equation*}
where the ceiling operator is defined by $\lceil x\rceil \coloneqq \min \left\{ n \in \Relative \st n \ge x \right\}$ for all $x \in \Real$.
This choice ensures that, asymptotically, the global time-integration error associated with the time-step size $\Delta t = \frac{\tF}{N_{\tF}}$ is of order $h^{k+1}$.
At each time step, the nonlinear algebraic problem is solved using Newton's method until the initial residual has been reduced by eight orders of magnitude.
Linear systems are solved using the direct solver \texttt{Pardiso}~\cite{Schenk.Gartner.ea:01}.
Due to their linearity, the constraints \eqref{discrete:mass.gauge} hold at each nonlinear iteration, preserving the pointwise divergence-free property of the solution.

For the tests involving nonhomogeneous Dirichlet conditions, the boundary face unknowns are fixed to the corresponding interpolated boundary data. This is the standard extension of the scheme to nonhomogeneous data; the analysis above is restricted to homogeneous conditions only to simplify the presentation.

Denote by $t^n \coloneqq n \Delta t$ the time after $n$ steps.
The two families of upwind coefficients $\beta$ and $\gamma$ are obtained at each time step $n \ge 1$ by setting, for all $T \in \Th$,
\begin{equation}\label{eq:C_stab_upwind}
  \beta_T(t^n) = \gamma_T(t^n)
  \coloneqq
  C_{\rm stab} \max\left\{
  10^{-4}, \norm{L^\infty(T)^d}{u_T(t^{n-1})} + \norm{L^\infty(T)^d}{b_T(t^{n-1})}
  \right\}.
\end{equation}
The user-dependent parameter $C_{\rm stab}$ influences the amount of upwinding in the scheme.
Too much upwinding might prevent the scheme from reaching the asymptotic behavior, whereas too little upwinding may render the method unstable.
Notice, in passing, that our regime-dependent analysis justifies the classical notion that convective stabilization is essential in the convection-dominated regime, while it can be reduced or even turned off in the diffusion-dominated regime.
We have also observed that increasing the polynomial degree tends to improve the stability of the scheme, and therefore allows for smaller values of $C_{\rm stab}$.

In accordance with Remark~\ref{rm:convergence_rate}, we expect to observe convergence in $h^{k+\frac12}$ (pre-asymptotic rate) for small values of the diffusion coefficients and/or coarser meshes, with an improvement up to $h^{k+1}$ (asymptotic rate) for larger values of the diffusion coefficients and/or fine meshes.
Notice, however, that, in practice, achieving the asymptotic rate may require extremely fine meshes, which is not always feasible.

\subsection{Two-dimensional convergence test case}\label{sec:numerical.results:2d}

We take $\Omega=(0,1)^2$ and select the forcing terms $f$ and $g$ so that the exact solution is, for all $(x,y) \in \Omega$ and all $t \in [0,1]$,
\[
\begin{gathered}
  u(x,y,t)= -e^{-0.5 t}\begin{bmatrix}\sin (2\pi x) \sin (2\pi y)\\ \cos (2\pi x) \cos (2\pi y)\end{bmatrix},\quad
  b(x,y,t)= -e^{-0.5 t}\begin{bmatrix}\cos (2\pi x) \cos (2\pi y)\\ \sin (2\pi x) \sin (2\pi y)\end{bmatrix},\\
  p(x,y,t)= e^{-0.5 t}(\sin (2\pi x) \cos (2\pi y)),\quad
  r(x,y,t)= e^{-0.5 t}(\cos (2\pi x) \sin (2\pi y)).
\end{gathered}
\]
To capture the different regimes, we have used $\nu,\mu\in \{1, 10^{-3}, 10^{-6}\}$.
The parameter $C_{\rm stab}$ present in \eqref{eq:C_stab_upwind} has been set to 1.
Figure~\ref{fig:manufactured_solution_one} depicts the energy error of the discrete solution $\tnorm{(\eu,\eb)}$ as a function of $h$.
For each pair of consecutive meshes, the corresponding convergence rate is reported above the curve.
In Figure~\ref{fig:manufactured_solution_one.a}, an asymptotic behavior with convergence rates close to $k+1$ is observed for all values $k$.
In Figure~\ref{fig:manufactured_solution_one.i}, on the other hand the behavior is pre-asymptotic, with convergence rate closer to $k + \frac12$.
Figures~\ref{fig:manufactured_solution_one.a},~\ref{fig:manufactured_solution_one.b},~\ref{fig:manufactured_solution_one.c},~\ref{fig:manufactured_solution_one.d},~\ref{fig:manufactured_solution_one.g} show that, for the considered configuration, the asymptotic behavior tends to be achieved quicker with higher polynomial degrees.
The transition from the pre-asymptotic to the asymptotic behavior can be observed for $k=0$ in Figures~\ref{fig:manufactured_solution_one.e},~\ref{fig:manufactured_solution_one.f},~\ref{fig:manufactured_solution_one.h}, ~\ref{fig:manufactured_solution_one.i}.

\begin{sidewaysfigure}
  \centering
  \subcaptionbox{$(\nu,\mu) = (1,1)\label{fig:manufactured_solution_one.a}$}[0.225\textwidth]{%
  \centering
  \includegraphics[width=\linewidth]{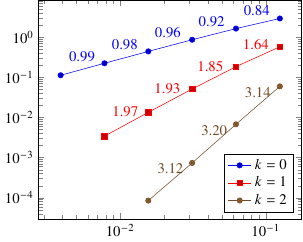}
}\hspace{0.01\textwidth}
  \subcaptionbox{$(\nu,\mu) = (1,10^{-3})\label{fig:manufactured_solution_one.b}$}[0.225\textwidth]{%
  \centering
  \includegraphics[width=\linewidth]{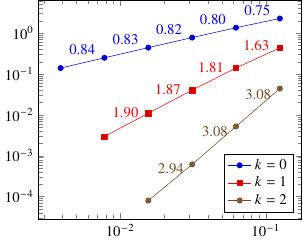}
}\hspace{0.01\textwidth}
  \subcaptionbox{$(\nu,\mu) = (1,10^{-6})\label{fig:manufactured_solution_one.c}$}[0.225\textwidth]{%
  \centering
  \includegraphics[width=\linewidth]{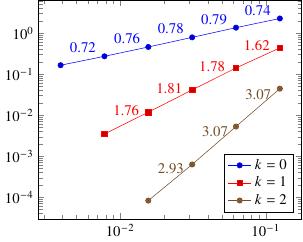}
}
  \medskip\\
  \subcaptionbox{$(\nu,\mu) = (10^{-3},1)\label{fig:manufactured_solution_one.d}$}[0.225\textwidth]{%
  \centering
  \includegraphics[width=\linewidth]{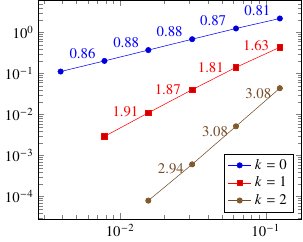}
}\hspace{0.01\textwidth}
  \subcaptionbox{$(\nu,\mu) = (10^{-3},10^{-3})\label{fig:manufactured_solution_one.e}$}[0.225\textwidth]{%
  \centering
  \includegraphics[width=\linewidth]{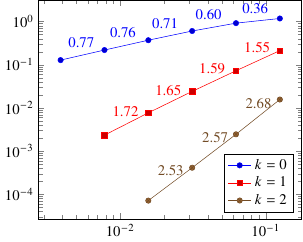}
}\hspace{0.01\textwidth}
  \subcaptionbox{$(\nu,\mu) = (10^{-3},10^{-6})\label{fig:manufactured_solution_one.f}$}[0.225\textwidth]{%
  \centering
  \includegraphics[width=\linewidth]{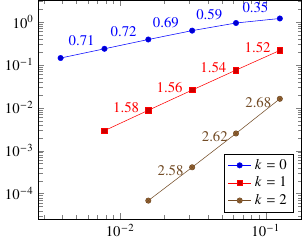}
}
  \medskip\\  
  \subcaptionbox{$(\nu,\mu) = (10^{-6},1)\label{fig:manufactured_solution_one.g}$}[0.225\textwidth]{%
  \centering
  \includegraphics[width=\linewidth]{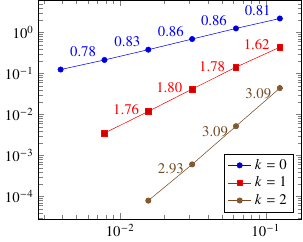}
}\hspace{0.01\textwidth}
  \subcaptionbox{$(\nu,\mu) = (10^{-6},10^{-3})\label{fig:manufactured_solution_one.h}$}[0.225\textwidth]{%
  \centering
  \includegraphics[width=\linewidth]{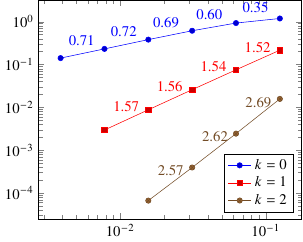}
}\hspace{0.01\textwidth}
  \subcaptionbox{$(\nu,\mu) = (10^{-6},10^{-6})\label{fig:manufactured_solution_one.i}$}[0.225\textwidth]{%
  \centering
  \includegraphics[width=\linewidth]{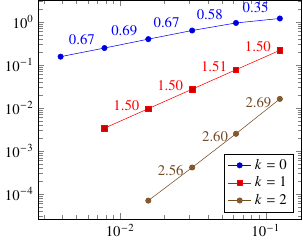}
}
  \caption{Error $\tnorm{(\eu,\eb)}$ as a function of the meshsize $h$ for the two-dimensional test case of Section~\ref{sec:numerical.results:2d}.}
  \label{fig:manufactured_solution_one}
\end{sidewaysfigure}

In Table~\ref{tab:2D DOFs}, the number of degrees of freedom (DOFs) required for the previous computations is reported as a function of the polynomial degree $k$ and the mesh size $h$.
\begin{table}\centering  
  \begin{tabular}{cccc}
    \toprule
    $h$ & $k=0$ & $k=1$ & $k=2$ \\
    \midrule
    \num[exponent-mode = scientific]{0.125}      & \num{2434}    & \num{4418}    & \num{6402} \\
    \num[exponent-mode = scientific]{0.0625}     & \num{9794}    & \num{17794}   & \num{25794} \\
    \num[exponent-mode = scientific]{0.03125}    & \num{39298}   & \num{71426}   & \num{103554} \\
    \num[exponent-mode = scientific]{0.015625}   & \num{157442}  & \num{286210}  & \num{414978} \\
    \num[exponent-mode = scientific]{0.0078125}  & \num{630274}  & \num{1145858} &  -- \\
    \num[exponent-mode = scientific]{0.00390625} & \num{2522114} & --      & -- \\
    \bottomrule
  \end{tabular}
    \caption{Number of DOFs as a function of $k$ and $h$}
    \label{tab:2D DOFs}
\end{table}
Moreover, Figure~\ref{fig:manufactured_solution_one_DOF} shows that, at a comparable number of DOFs, increasing the polynomial degree yields a smaller error than refining the mesh at lower degree.
\begin{figure}
  \centering
\subcaptionbox{$(\nu,\mu) = (1,10^{-3})$}{%
  \centering
  \includegraphics[width=0.4\textwidth]{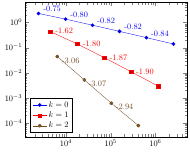}
}
  \hfil
\subcaptionbox{$(\nu,\mu) = (10^{-6},10^{-3})$}{%
  \centering
  \includegraphics[width=0.4\textwidth]{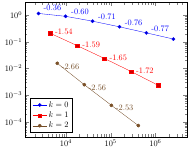}
}
\caption{Error $\tnorm{(\eu,\eb)}$ as a function of the square root of the number of DOFs.}
\label{fig:manufactured_solution_one_DOF}
\end{figure}

\subsection{Three-dimensional convergence test case}\label{sec:numerical.results:3d}

We take $\Omega=(0,1)^3$ and select the forcing terms $f$ and $g$ so that the exact solution is, for all $(x,y,z) \in \Omega$ and all $t \in [0,1]$,
\[
\begin{gathered}
  u= e^{-0.5 t}\begin{bmatrix}\sin (2\pi x) \cos (2\pi y) \cos (2\pi z)\\ -\cos (2\pi x) \sin (2\pi y) \cos (2\pi z)\\0
  \end{bmatrix},\quad
  b= e^{-0.5 t}\begin{bmatrix}\cos (2\pi x) \sin (2\pi y) \sin (2\pi z)\\ \sin (2\pi x) \cos (2\pi y) \sin (2\pi z)\\ -2\sin (2\pi x) \sin (2\pi y) \cos (2\pi z)\end{bmatrix},
  \\
  p= e^{-0.5 t}(\sin (2\pi x) \sin (2\pi y) \cos (2\pi z)),\qquad
  r= e^{-0.5 t}(\cos (2\pi x) \cos (2\pi y) \sin (2\pi z)).
\end{gathered}
\]
We set $\nu=1$ and consider $\mu\in\{1,10^{-2}\}$. For each value of $\mu$, multiple computations are performed by varying the upwind stabilization parameter $C_{\rm stab}$ in \eqref{eq:C_stab_upwind}. Specifically, $C_{\rm stab}$ is set to $0$ (i.e., no convective stabilization) and $1$ for $\mu=1$, and to $0.5$ and $1$ for $\mu=10^{-2}$. Figure~\ref{fig:manufactured_solution_two} shows the energy error $\tnorm{(\eu,\eb)}$ as a function of $h$. Continuous lines correspond to $C_{\rm stab}=0$ and $C_{\rm stab}=0.5$, while dotted lines correspond to $C_{\rm stab}=1$. As in the 2D case, both the diffusion-dominated regime (see Figure~\ref{fig:manufactured_solution_two.a}) and the convection-dominated regime (see Figure~\ref{fig:manufactured_solution_two.b}) are observed.

The computations with $C_{\rm stab}=0$ fall outside the strict-positivity assumption on the stabilization coefficients used in the analysis (in particular, $\chi$ is then undefined). The well-posedness proof nevertheless extends to the centered scheme: the corresponding stabilization seminorm simply vanishes, while the energy estimate remains valid.

\begin{figure}
  \centering
  \subcaptionbox{$(\nu,\mu) = (1,1)\label{fig:manufactured_solution_two.a}$}[0.42\textwidth]{%
  \centering
  \includegraphics[width=\linewidth]{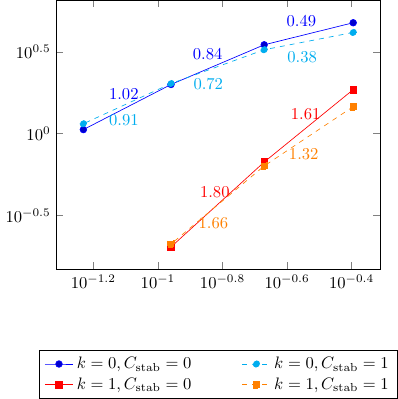}
}
  \hfil
\subcaptionbox{$(\nu,\mu) = (1,10^{-2})\label{fig:manufactured_solution_two.b}$}[0.42\textwidth]{%
  \centering
  \includegraphics[width=\linewidth]{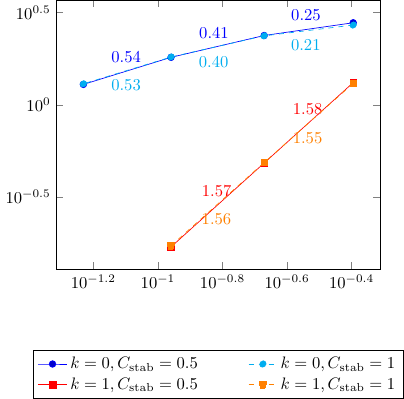}
}
  \caption{Error $\tnorm{(\eu,\eb)}$ as a function of the meshsize $h$ for the three-dimensional test case of Section~\ref{sec:numerical.results:3d}.}
  \label{fig:manufactured_solution_two}
\end{figure}

\begin{remark}[Choice of $C_{\rm stab}$] The numerical experiments show that the choice $C_{\rm stab} = 1$ in the three-dimensional case can result in excessive numerical dissipation on refined meshes.
    Moreover, the choice of $C_{\rm stab}$ has an influence on the local Reynolds and Hartmann numbers, and can therefore affect not only the actual value of the error, but also the observed convergence rate (values too large can result in pre-asymptotic instead of asymptotic rates).
\end{remark}

\subsection{Magnetic lid-driven cavity}\label{sec:numerical.tests:cavity}

In this section we present a more physical test case corresponding to the magnetic lid-driven cavity, inspired by the three-dimensional case of \cite{Ben-Salah.Soulaimani.ea:01}.
Our current three-dimensional implementation is not parallel and therefore does not permit large-scale computations. For this reason, we opted for a two-dimensional version of the test case. This reduction cannot be compared directly with the three-dimensional benchmark of~\cite{Ben-Salah.Soulaimani.ea:01}, but it remains a meaningful qualitative MHD test. As shown below, the numerical results exhibit distinctive features, such as the formation of a vortex-like structure in the magnetic field induced by the recirculating flow within the lid-driven cavity.
The domain $\Omega$ is the unit square $(0,1)^2$, where the upper boundary represents a lid moving with constant horizontal velocity component $U = 1$.
The fluid viscosity is $\nu = 10^{-3}$, corresponding to a Reynolds number of $10^3$.
The initial conditions are $u_0=0$ inside the cavity and $b_0 = B e_2$, with $\{e_1, e_2\}$ unit vectors along the $x$- and $y$-axis.
The boundary conditions match $u_0$ and $b_0$, except for the velocity at the top of the cavity which is imposed as $u=Ue_1$.
Our goal here is to investigate the effect on both the velocity and magnetic fields of varying the value of the magnetic diffusivity $\mu$.

\begin{figure}
  \centering
  \includegraphics[width=0.35\textwidth]{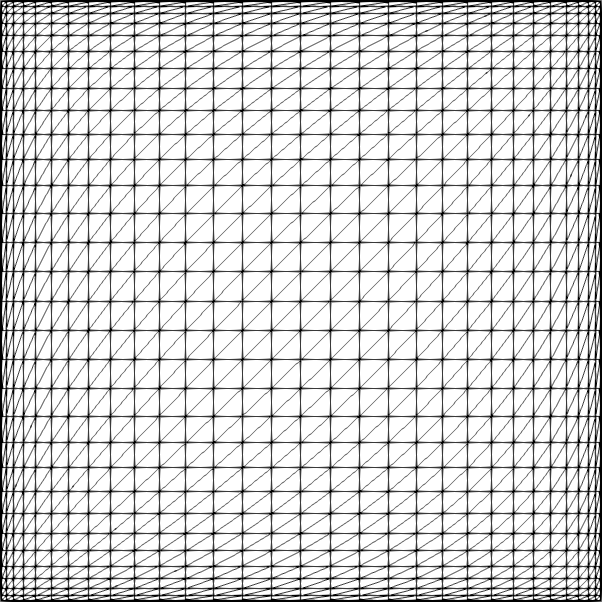}
  \caption{Mesh used in the numerical test of Section \ref{sec:numerical.tests:cavity}.}
  \label{fg:mesh_LDC}
\end{figure}
Figure~\ref{fg:mesh_LDC} depicts the computational mesh used in this section, which contains 2048 elements and is refined towards the boundary.
For the space discretization, we used the polynomial degree $k=2$.
The selected upwind stabilization parameter is set to $C_{\rm stab}=1$.
For the time integration, we employed the implicit Euler method for the first 10 time steps, followed by the Crank--Nicolson scheme for all subsequent time steps.
This choice facilitates the convergence to a steady-state solution (which could, otherwise, be affected by oscillations in time as pointed out in \cite{Luskin01011982, Rannacher1984}).
The simulations were run up to a final time of $\tF=500$ using a total of 3000 time steps.

Imposing $B=0$ leads to solving the hydrodynamic limit, where $\mu$ has no influence. The corresponding velocity is displayed In Figure \ref{fig:U1-B0-k2-nu0.001}, in which the classical recirculating vortices are seen at bottom corners.
\begin{figure}
  \centering
  \includegraphics[height=0.35\textwidth, valign=t]{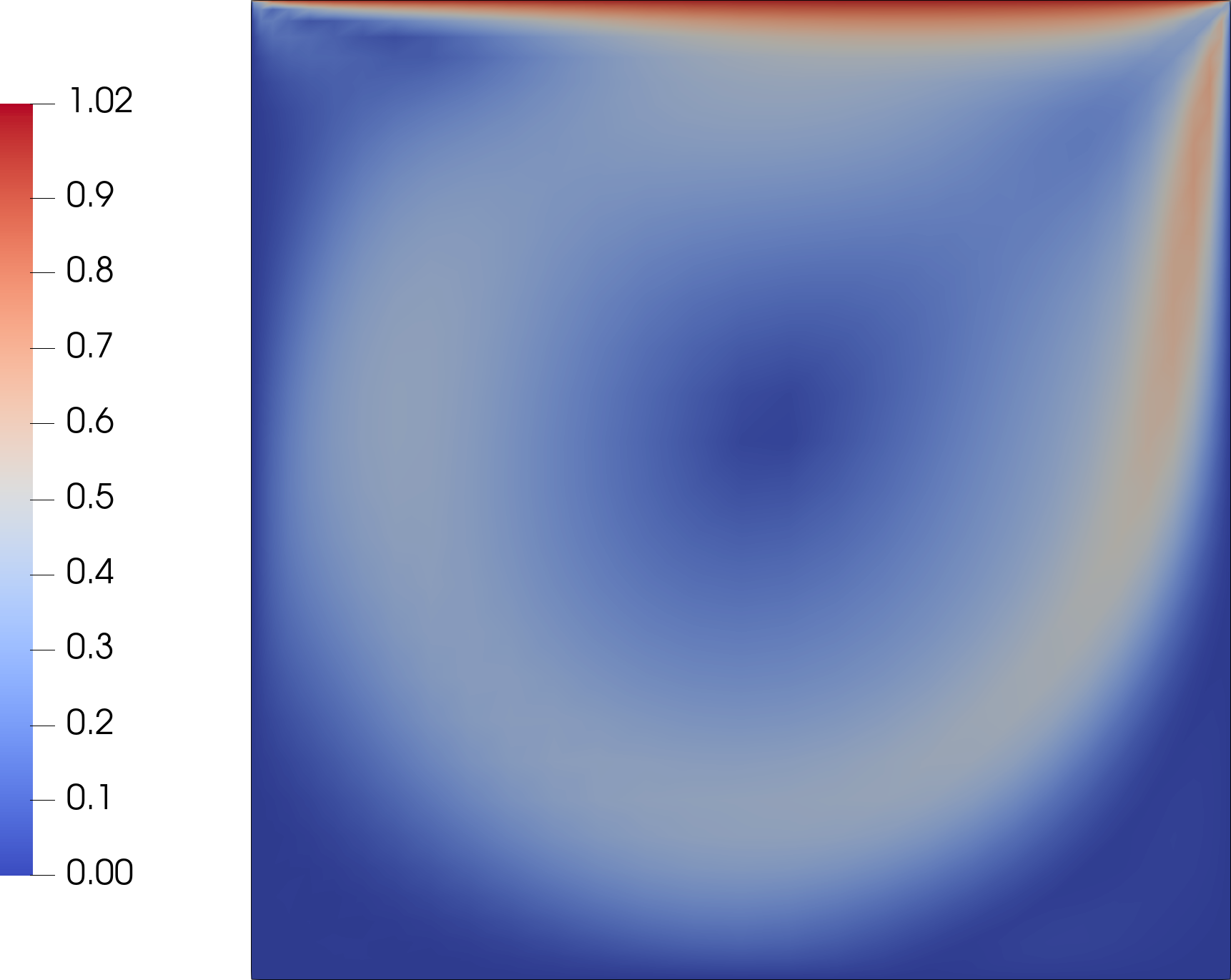}
  \hspace{1cm}
  \includegraphics[height=0.35\textwidth, valign=t]{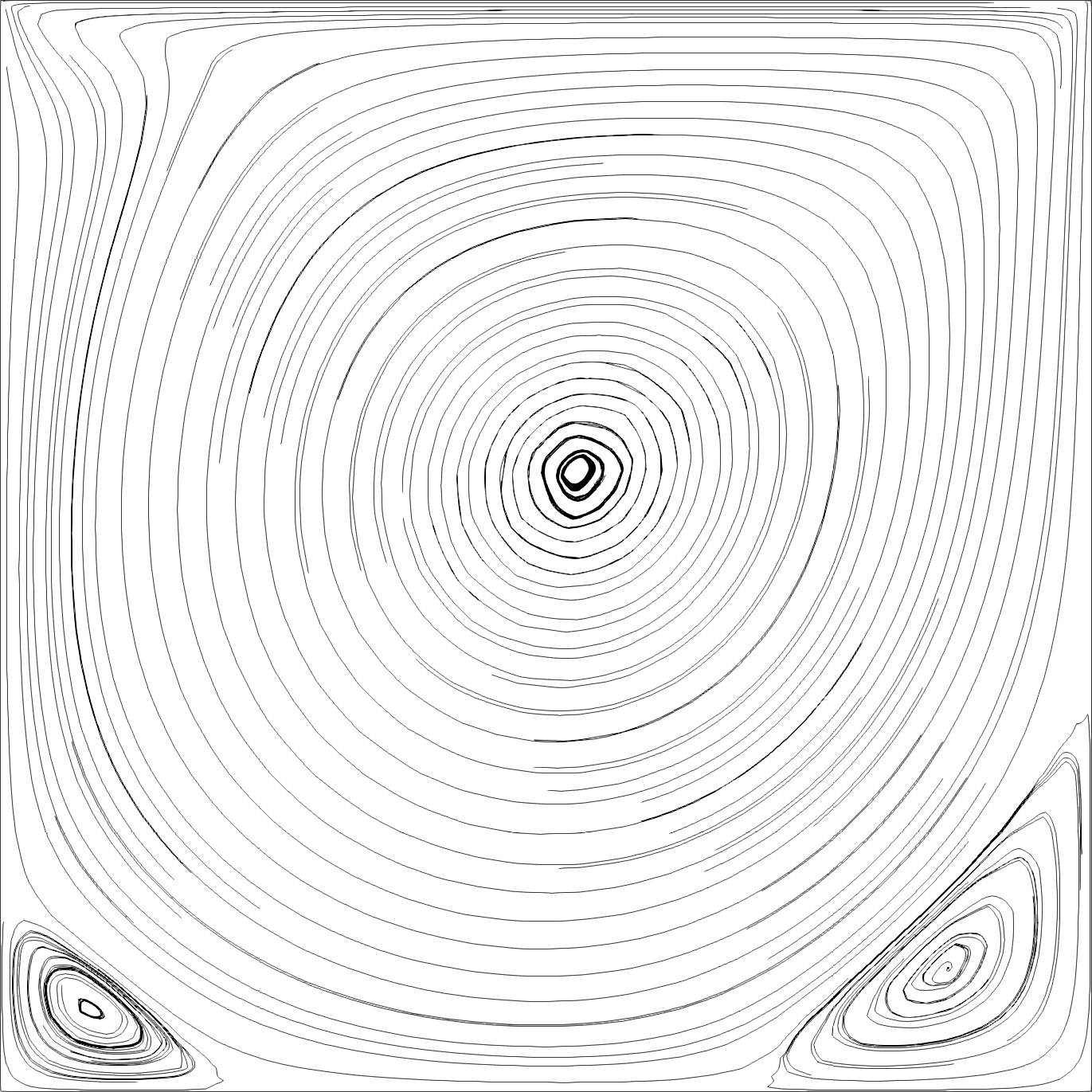}
  \caption{Magnitude of the velocity field (left) and streamlines (right) for $B=0$.}
  \label{fig:U1-B0-k2-nu0.001}
\end{figure}

Let us now fix $B=0.1$ and consider the behavior of the solution when the magnetic diffusivity $\mu$ varies.
\begin{figure}
  \centering
  \subcaptionbox{$B=0.1$ and $\mu=1$}[1\textwidth]{
    \includegraphics[height=0.35\textwidth, valign=t]{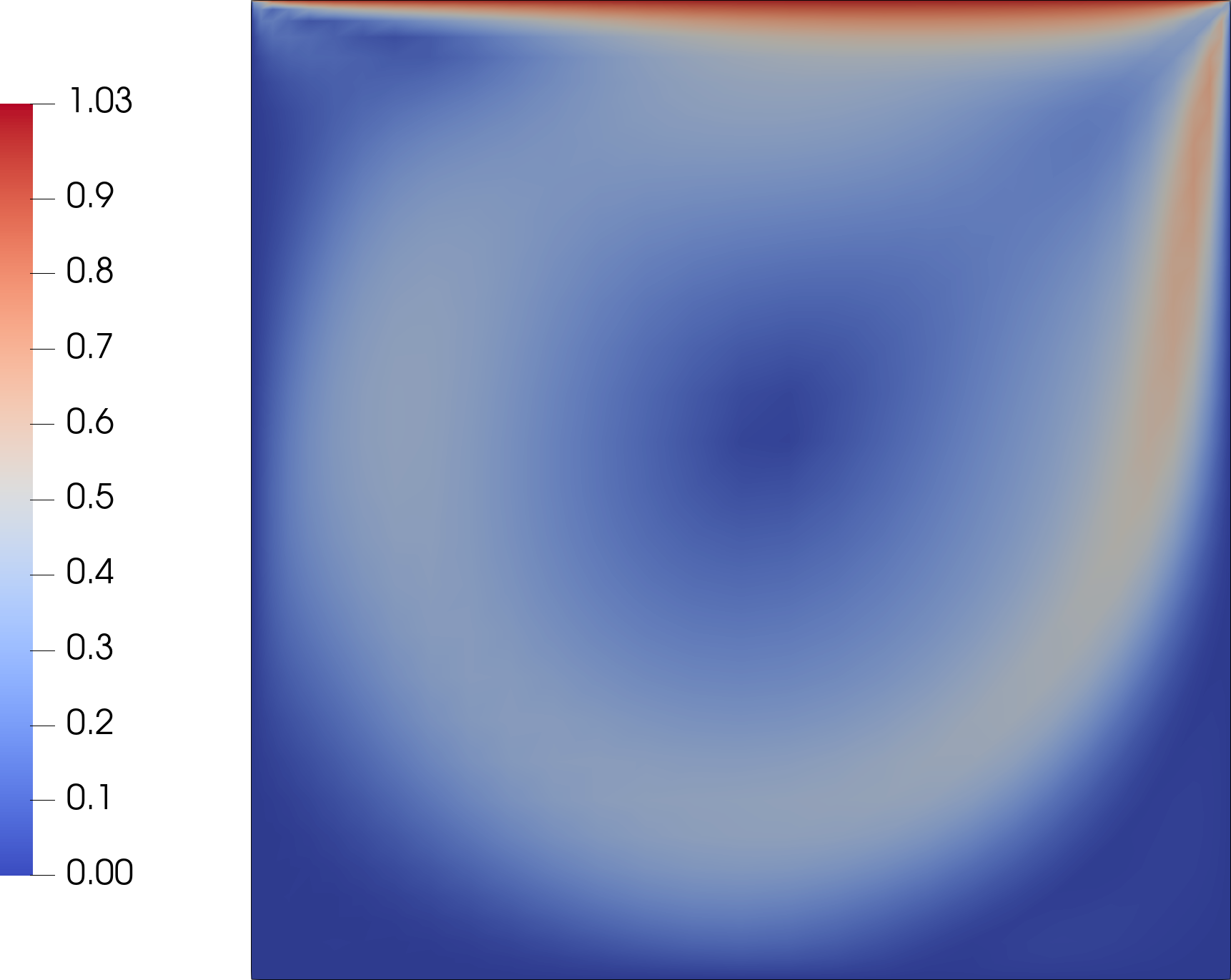}
    \hspace{1cm}
    \includegraphics[height=0.35\textwidth, valign=t]{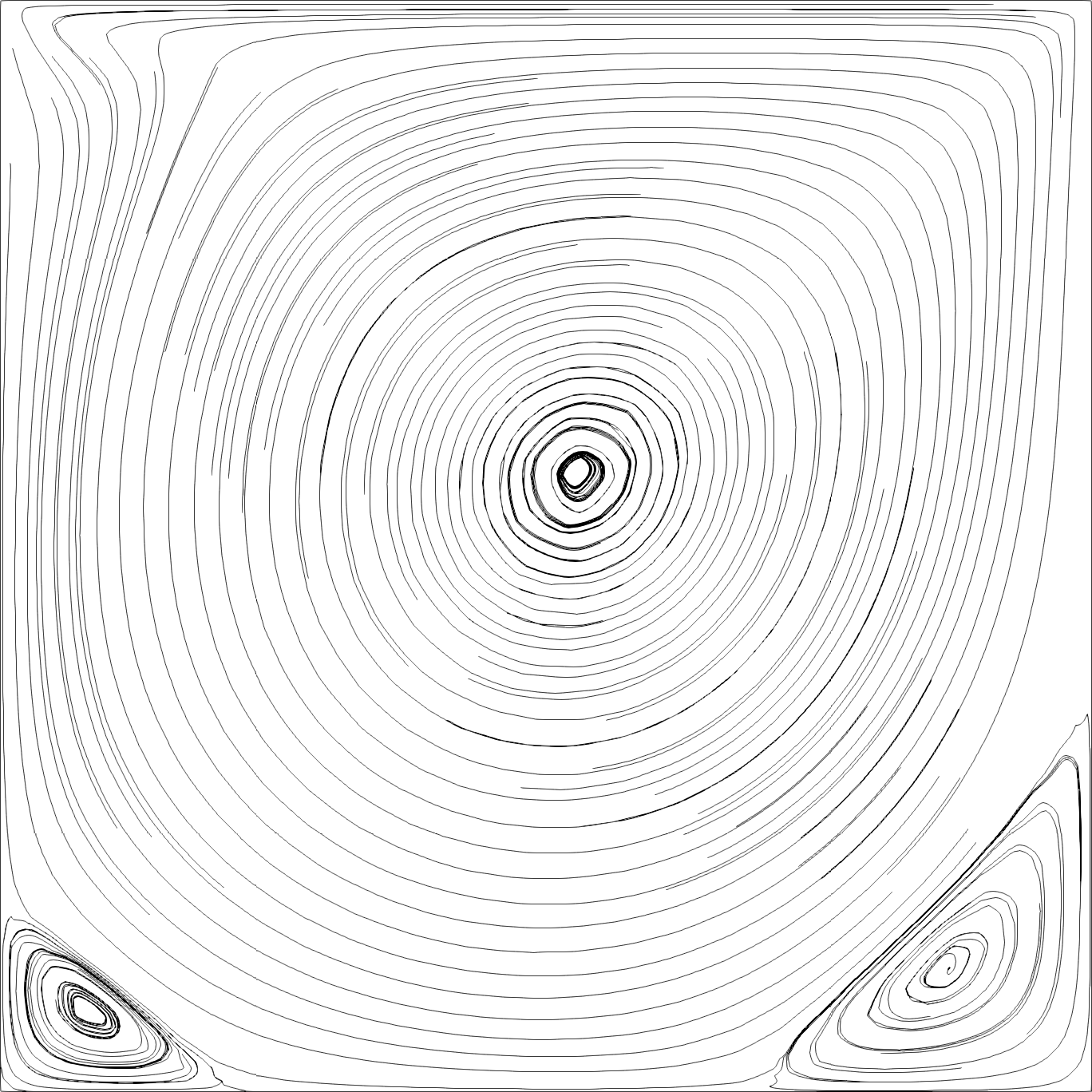}
  }
  \medskip\\
  \subcaptionbox{$B=0.1$ and $\mu=10^{-3}$}[1\textwidth]{
    \includegraphics[height=0.35\textwidth, valign=t]{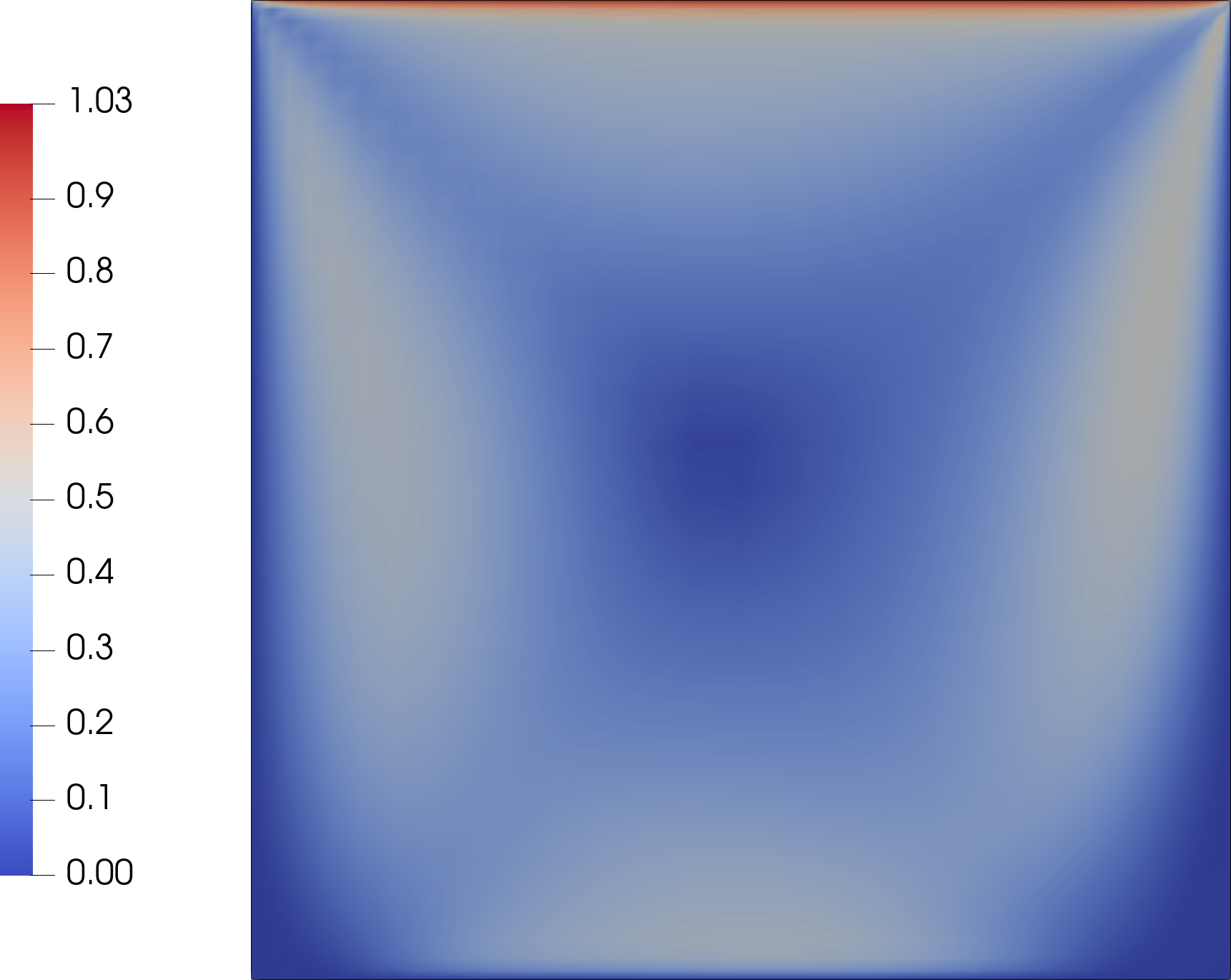}
    \hspace{1cm}
    \includegraphics[height=0.35\textwidth, valign=t]{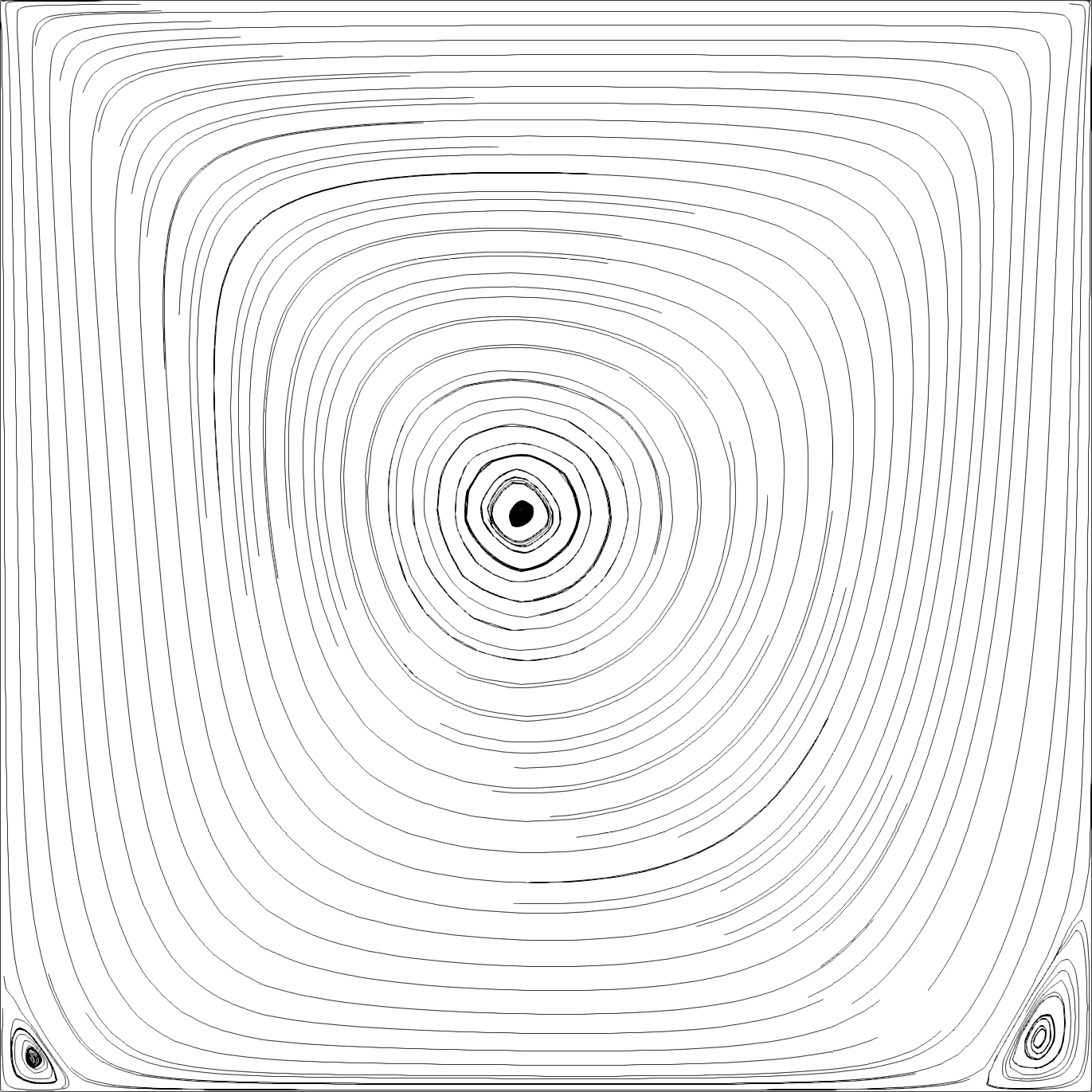}
  }
  \medskip\\
  \subcaptionbox{$B=0.1$ and $\mu=10^{-6}$}[1\textwidth]{
  \includegraphics[height=0.35\textwidth, valign=t]{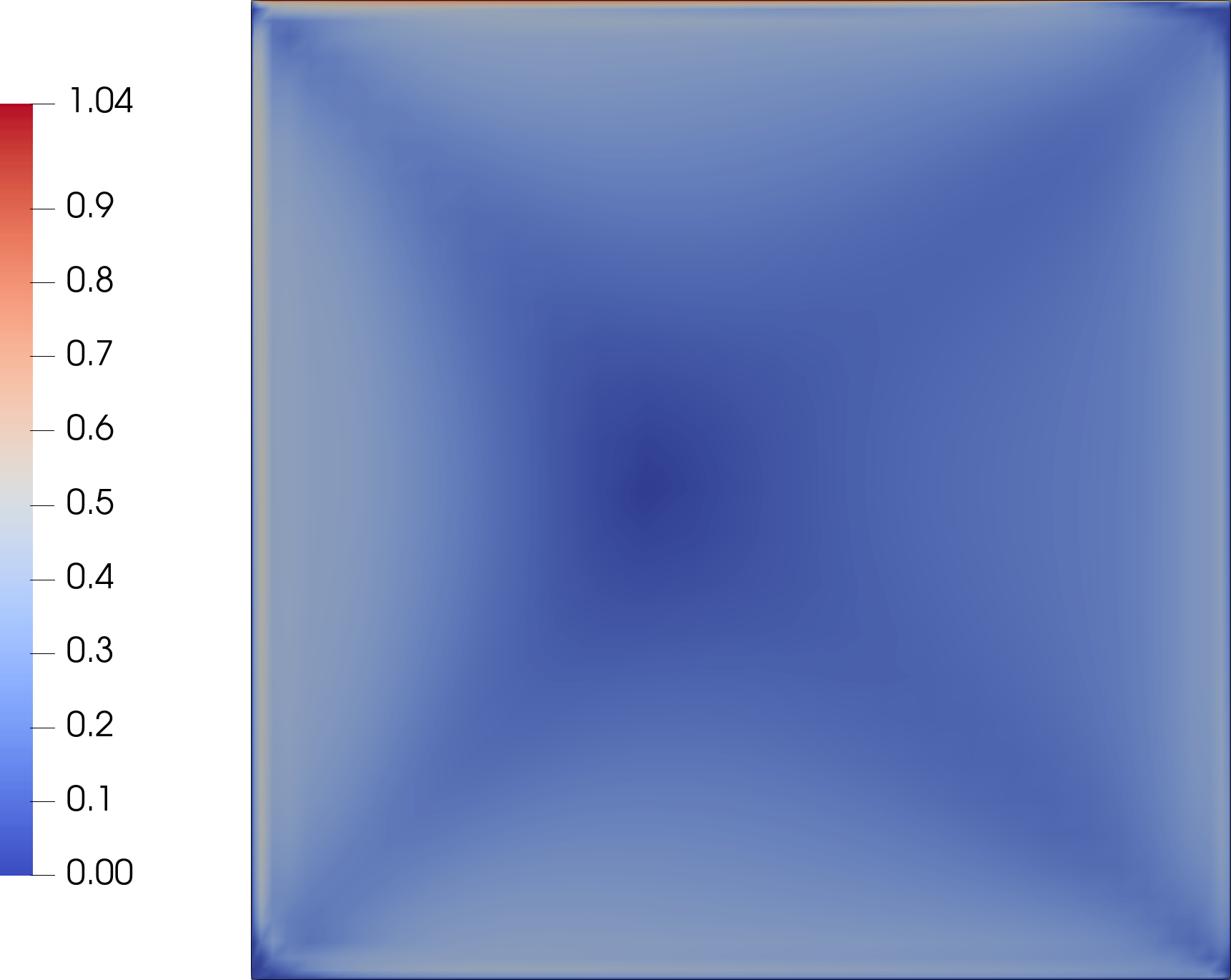}
  \hspace{1cm}
  \includegraphics[height=0.35\textwidth, valign=t]{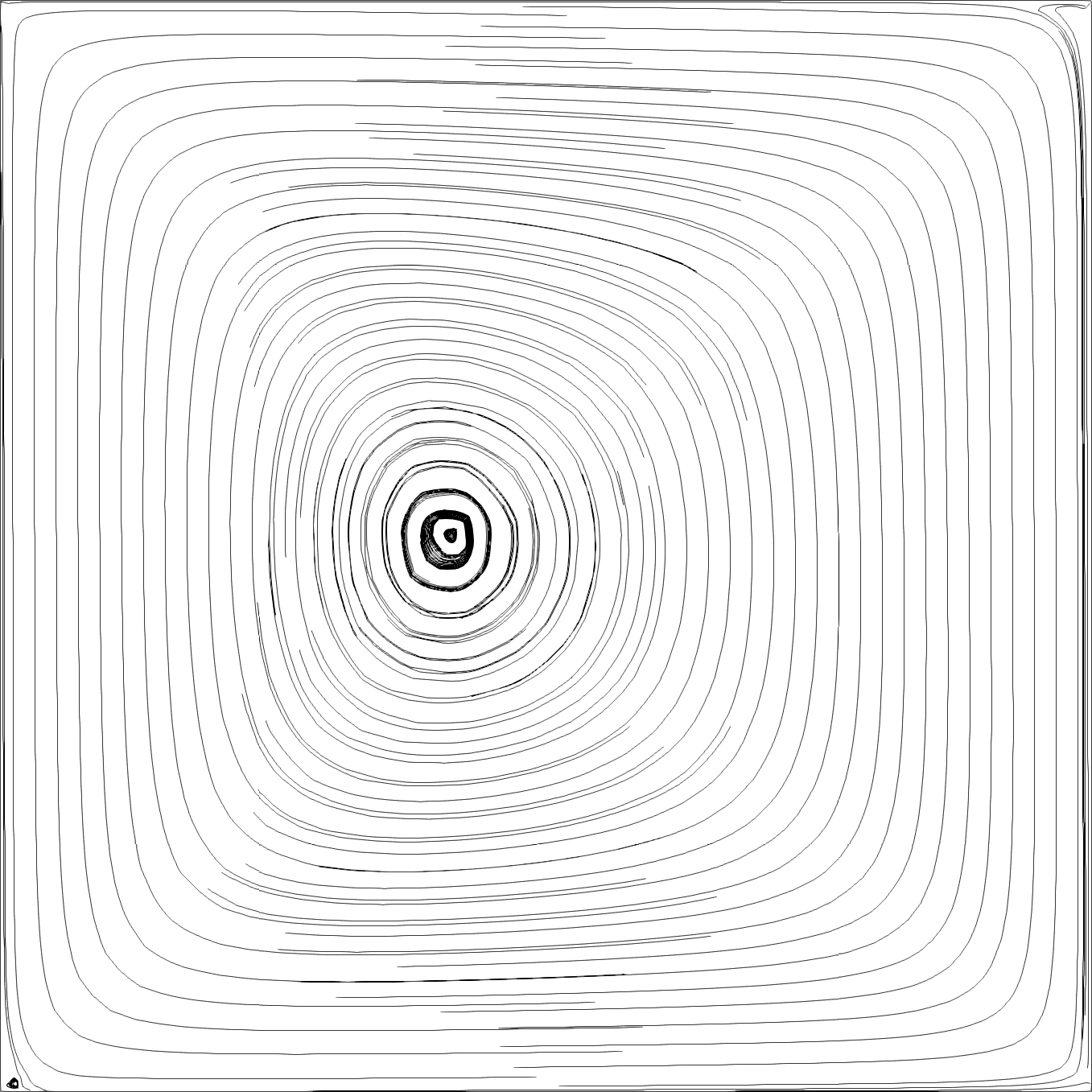}
  }
  \caption{Magnitude of the velocity field (left) and streamlines (right).}
  \label{fig:velocity-B01}
\end{figure}
Figure~\ref{fig:velocity-B01} displays the magnitude of the velocity field and the streamlines for $\mu \in \{1, 10^{-3}, 10^{-6}\}$.
As $\mu$ decreases, the corner vortices gradually shrink while the central vortex expands. We can see how, for $\mu = 10^{-6}$, it has expanded to almost fill the entire domain.
\begin{figure}
  \centering
  \subcaptionbox{$B=0.1$ and $\mu=1$}[1\textwidth]{
    \includegraphics[height=0.35\textwidth, valign=t]{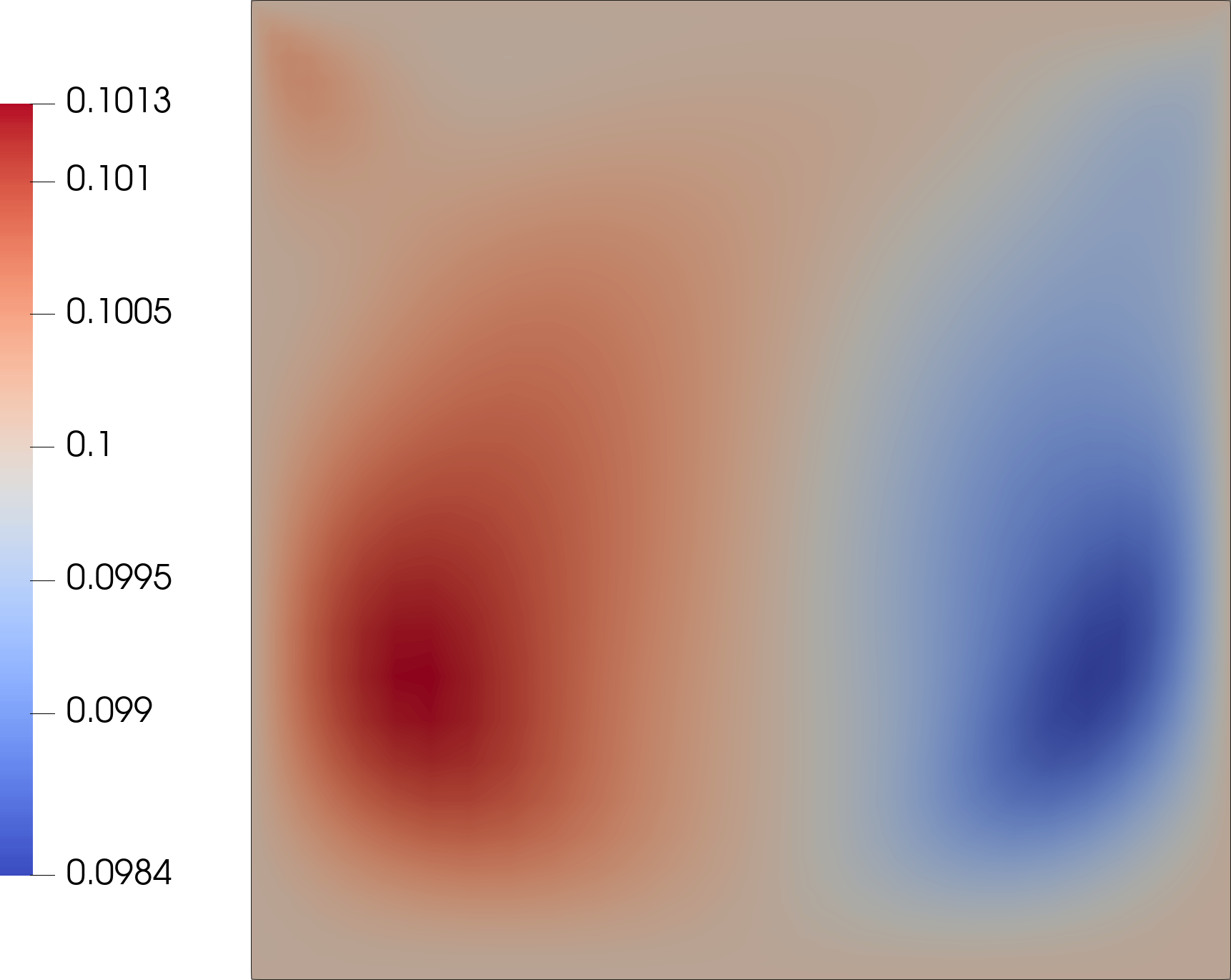}
    \hspace{1cm}
    \includegraphics[height=0.35\textwidth, valign=t]{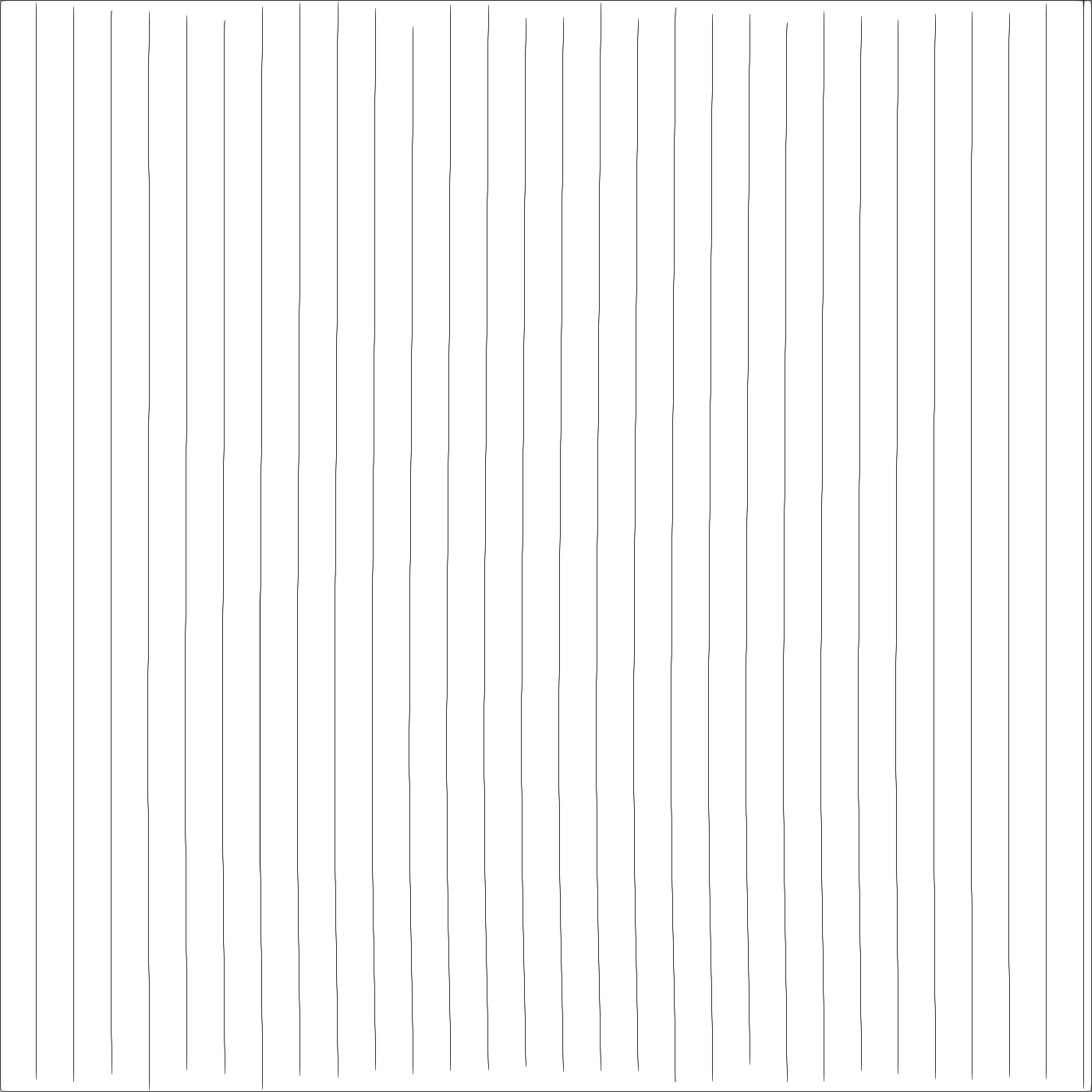}
  }
  \medskip\\
  \subcaptionbox{$B=0.1$ and $\mu=10^{-3}$}[1\textwidth]{
  \includegraphics[height=0.35\textwidth, valign=t]{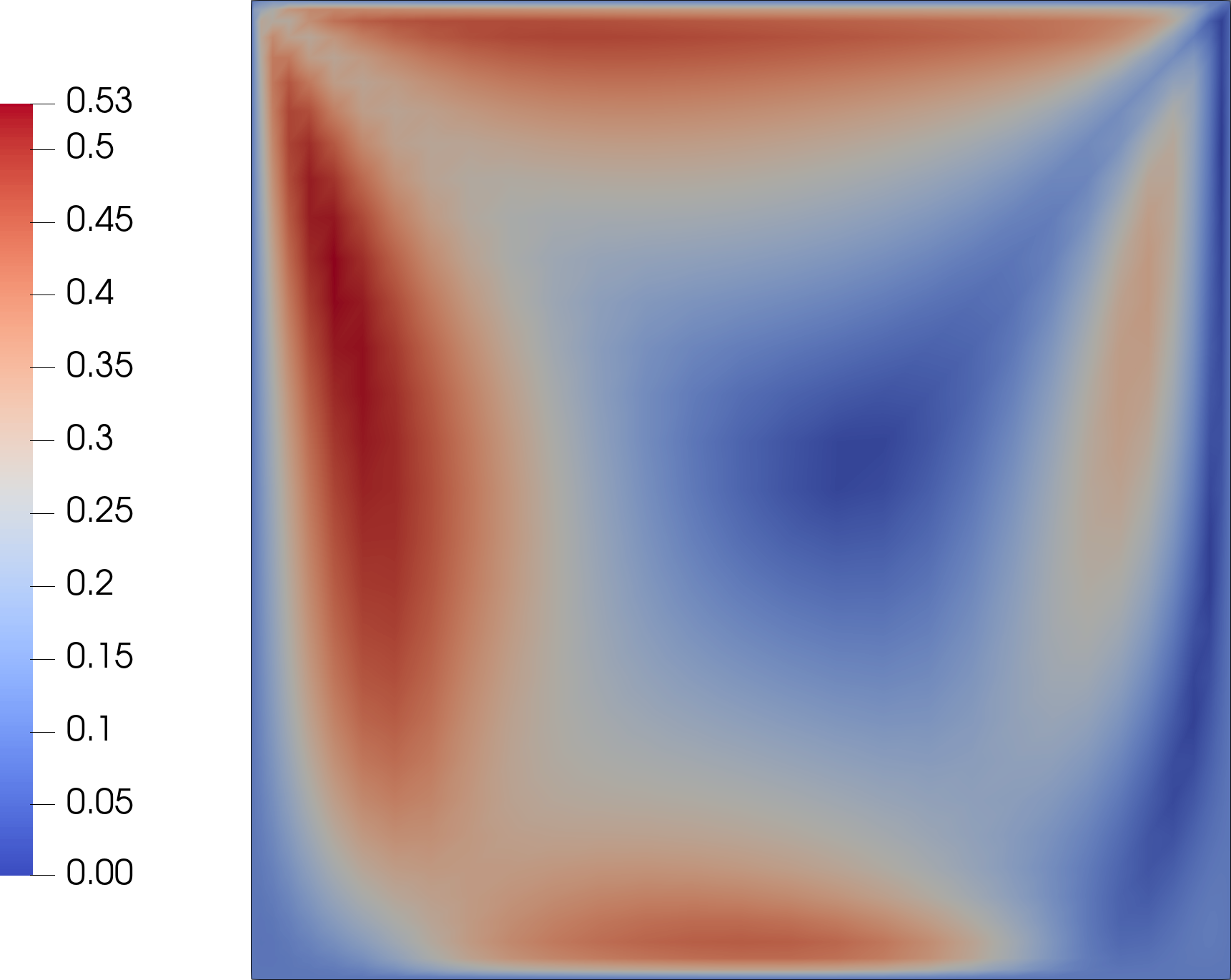}
  \hspace{1cm}
  \includegraphics[height=0.35\textwidth, valign=t]{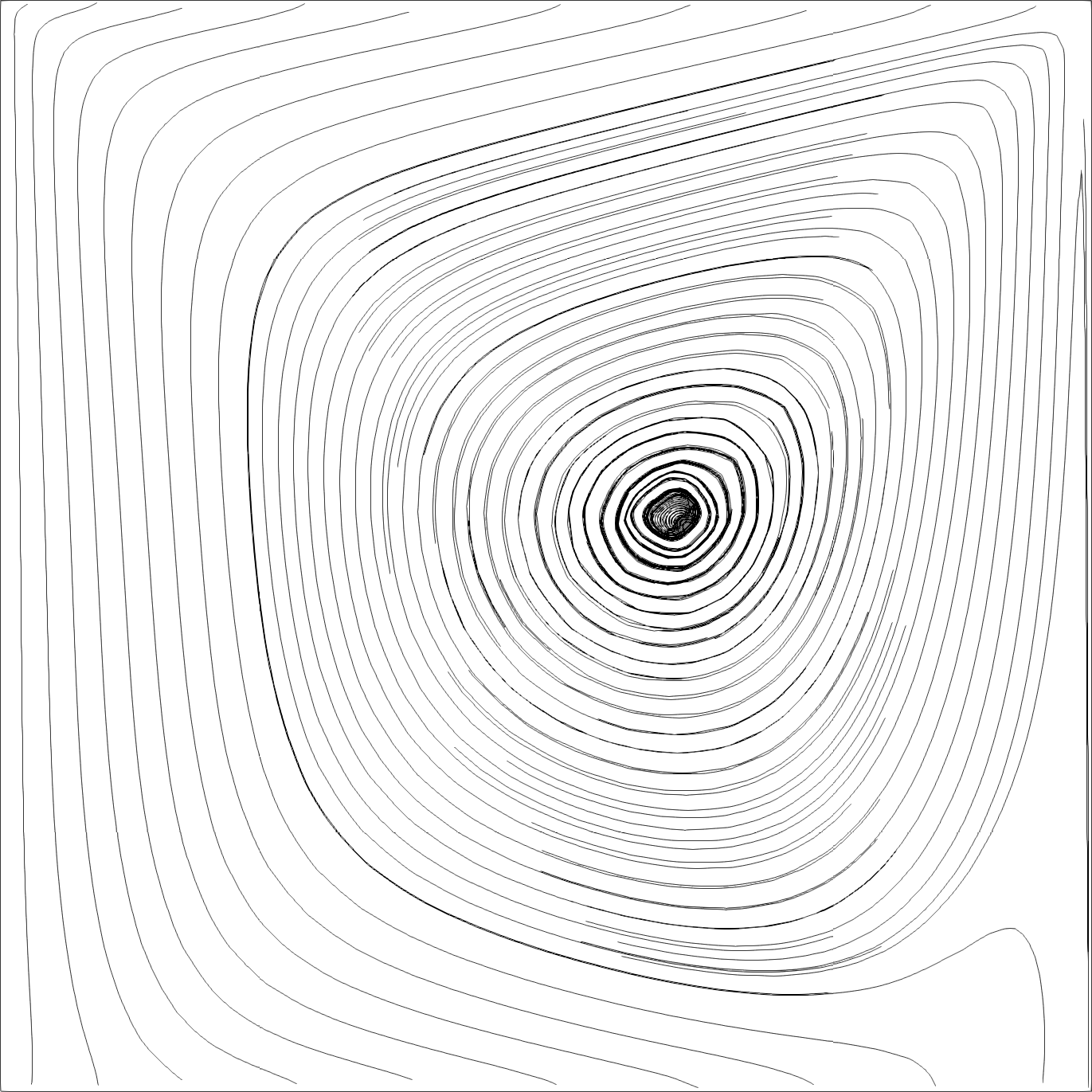}
  }
  \medskip\\
  \subcaptionbox{$B=0.1$ and $\mu=10^{-6}$}[1\textwidth]{
  \includegraphics[height=0.35\textwidth, valign=t]{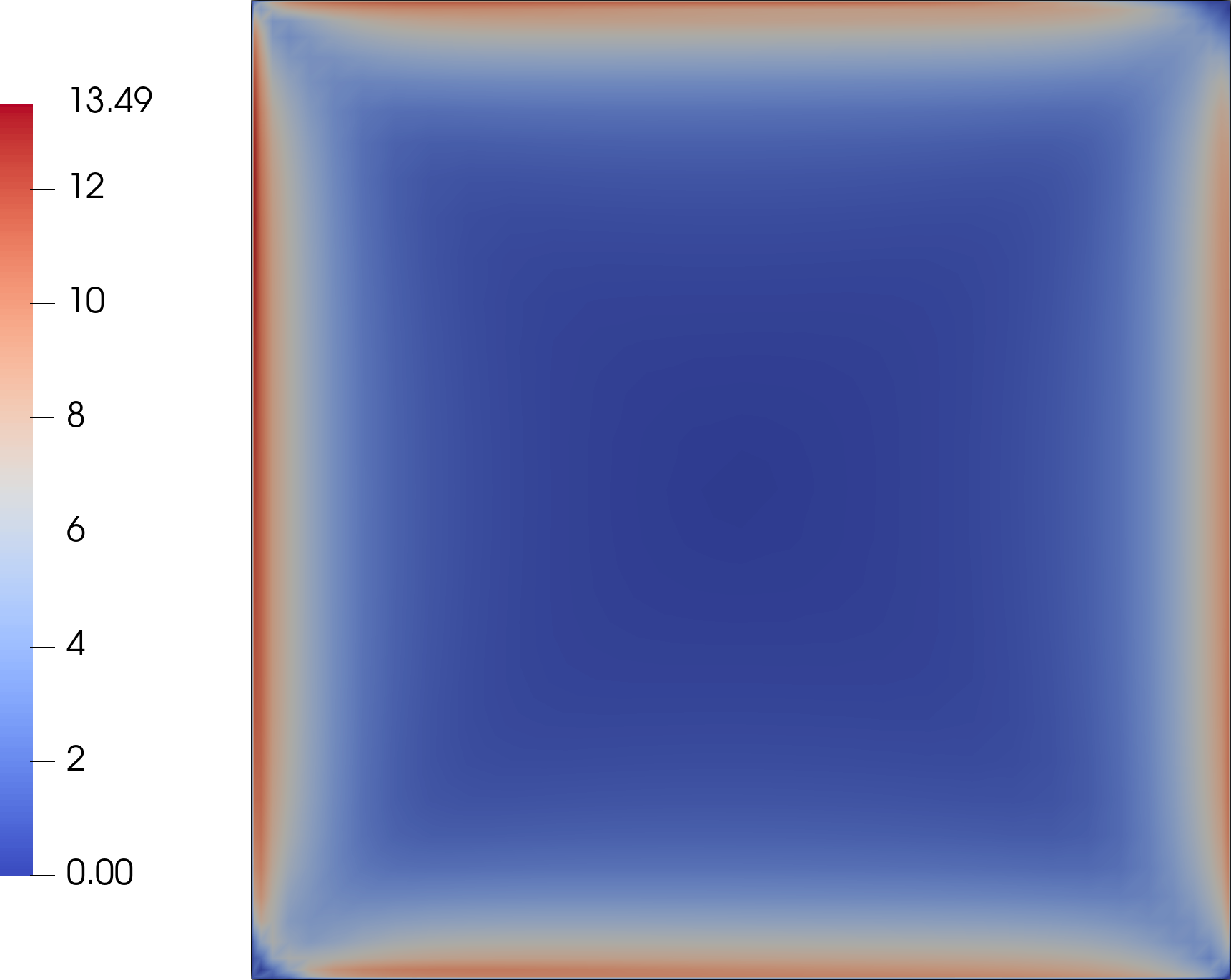}
  \hspace{1cm}
  \includegraphics[height=0.35\textwidth, valign=t]{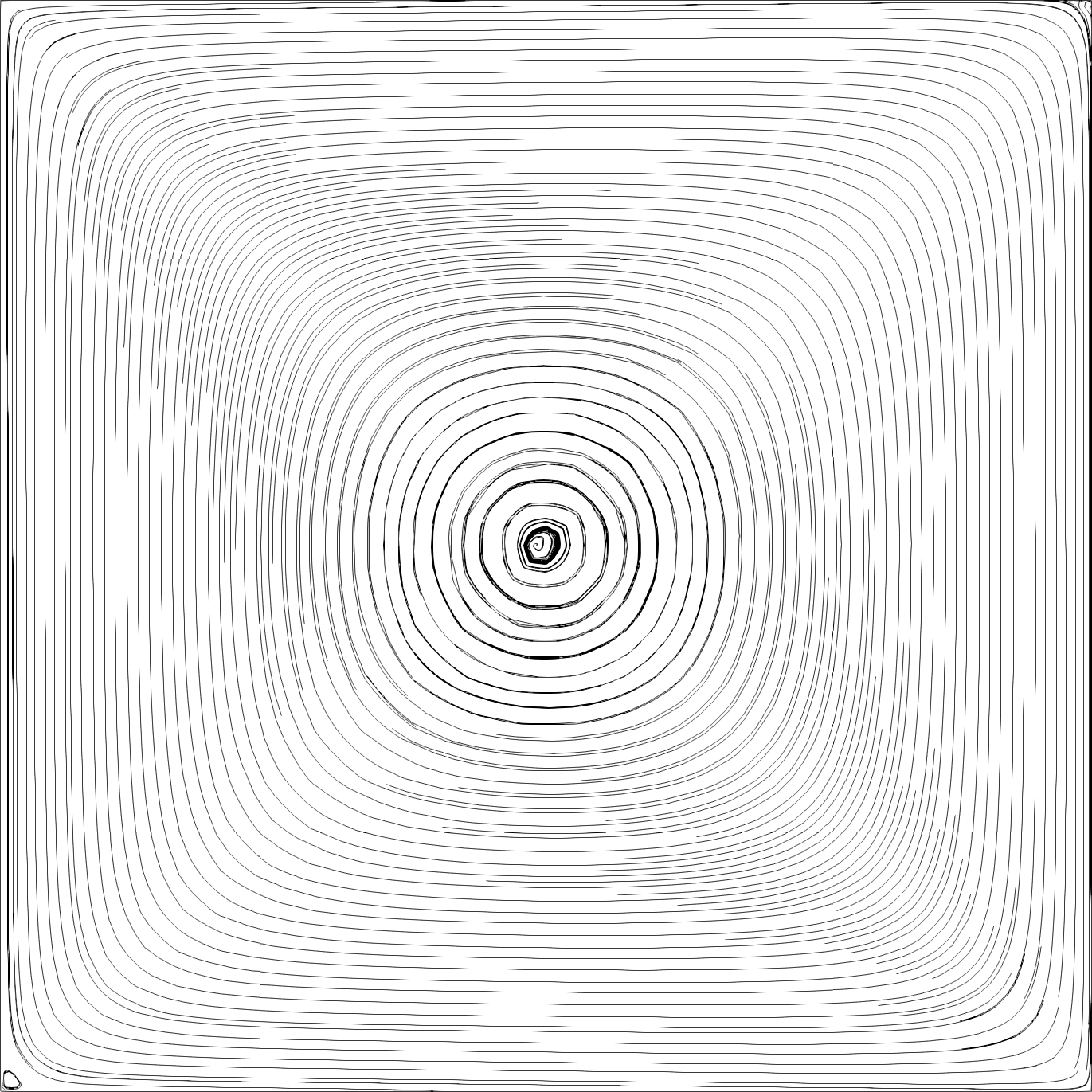}
  }
  \caption{Magnitude of the magnetic field (left) and field lines (right).}
  \label{fig:magnetic-B01}
\end{figure}
Figure~\ref{fig:magnetic-B01} depicts the magnitude and field lines of the corresponding magnetic field.
For $\mu=1$, the magnetic field already presents some tiny gradients, but it remains mostly vertical and close to the initial condition.
For decreasing values of $\mu$, magnetic diffusion becomes weaker and the magnetic field is increasingly transported by the flow. As a result, the magnetic field lines become progressively deformed and may eventually reverse their local direction, forming closed magnetic field lines. This mechanism is responsible for the appearance of the magnetic vortex observed in the simulations.
In particular, for $\mu=10^{-6}$, the magnetic field is strongly advected, leading to a recirculating behavior induced by the velocity field. 
Consequently, since most of the magnetic field lines are closed, the remaining field lines carry the magnetic flux imposed by the boundary conditions, leading to very steep gradients near the boundaries. Moreover, a small recirculation vortex can be observed in the left corner.

\section{Analysis of the scheme}\label{sec:analysis}

This section contains the proof of the results stated in Section ~\ref{sec:discrete.problem:main.results}.

\subsection{Well-posedness}\label{sec:analysis:well-posedness}

In order to prove Theorem~\ref{thm:well-posedness}, we need the following technical result.

\begin{lemma}[$L^2$-boundedness of the velocity interpolator]
  For all $v\in H^1(\Omega)^d$, we have
  \begin{equation}\label{eq:H1-boundedness}
    \text{%
      $\norm{0,h}{\Iu v} \lesssim \norm{H^1(\Omega)^d}{v}$
      and
      $\norm{1,h}{\Iu v} \lesssim \seminorm{H^1(\Omega)^d}{v}$.
    }
  \end{equation}
\end{lemma}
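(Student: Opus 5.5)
The plan is to prove both bounds element by element and then sum over $T\in\Th$. Fix $T$ and $v\in H^1(\Omega)^d$, and write $\Iu v$ restricted to $T$ as $(\Irt v,(\lproj{k}{F}v)_{F\in\FT})$. Each local quantity will be split by inserting $v$ (or a local constant) and applying the triangle inequality, so that only approximation properties of $\Irt$ and $\lproj{k}{F}$ are needed.

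For the $L^2$-like norm, I would first write
\[
\norm{0,T}{\IUT v}^2=\norm{L^2(T)^d}{\Irt v}^2+h_T\sum_{F\in\FT}\norm{L^2(F)^d}{\lproj{k}{F}v-\Irt v}^2 .
\]
The volume term is bounded by $\norm{L^2(T)^d}{v}+\norm{L^2(T)^d}{v-\Irt v}$. I then apply \eqref{eq:RTN_bound} with $p=2$, $q=0$, $m=0$ (admissible since $\ell=k+1\ge1$), which gives $\norm{L^2(T)^d}{v-\Irt v}\lesssim h_T\seminorm{H^1(T)^d}{v}$. For the face terms I insert $v_{|F}$:
\[
\norm{L^2(F)^d}{\lproj{k}{F}v-\Irt v}\le\norm{L^2(F)^d}{\lproj{k}{F}(v-\Irt v)}+\norm{L^2(F)^d}{v-\Irt v}\le 2\norm{L^2(F)^d}{v-\Irt v}.
\]
Here I use that $\Irt v\in\RTN{k+1}(T)$ has traces in $\Poly{k+1}(F)^d$, so I cannot claim it is fixed by $\lproj{k}{F}$. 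For this reason I avoid that identity and write $\lproj{k}{F}v-\Irt v=\lproj{k}{F}(v-\Irt v)+(\lproj{k}{F}\Irt v-\Irt v)$ instead. The second piece is bounded by $\norm{L^2(F)^d}{\Irt v-w}$ for any $w\in\Poly{k}(F)^d$, hence by $\norm{L^2(F)^d}{\Irt v-v}+\norm{L^2(F)^d}{v-\lproj{0}{T}v}$. Both pieces are then controlled by \eqref{eq:RTN_bound_F} with $q=0$ and by a continuous trace inequality combined with Poincaré on $T$, each giving $h_T^{1/2}\seminorm{H^1(T)^d}{v}$. Multiplying by $h_T$ yields face contributions $\lesssim h_T^2\seminorm{H^1(T)^d}{v}^2$. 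Summing over $T$ and using $h_T\lesssim1$ (since $\Omega$ is bounded) gives the first estimate.

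For the $H^1$-like seminorm I proceed in the same way. The gradient term satisfies $\norm{L^2(T)}{\grad\Irt v}\le\seminorm{H^1(T)^d}{v}+\seminorm{H^1(T)^d}{v-\Irt v}\lesssim\seminorm{H^1(T)^d}{v}$, using \eqref{eq:RTN_bound} with $m=1$, $q=0$. The face term $h_T^{-1}\norm{L^2(F)^d}{\lproj{k}{F}v-\Irt v}^2$ is handled by the decomposition above, each piece being $\lesssim h_T\seminorm{H^1(T)^d}{v}^2$. After the factor $h_T^{-1}$ this gives $\seminorm{H^1(T)^d}{v}^2$, and summing over $T$ gives the second estimate.

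The only delicate point is the face term. Because $\Irt v$ has face traces of degree $k+1$ rather than $k$, one cannot simply use idempotency of $\lproj{k}{F}$. The plan is to handle this by inserting $v$ and $\lproj{0}{T}v$ and relying on the trace approximation \eqref{eq:RTN_bound_F} together with the continuous trace and Poincaré inequalities, whose constants depend only on mesh regularity.
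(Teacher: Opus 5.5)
Your proof is correct and follows essentially the paper's route. You use the same local splitting of the two norms and \eqref{eq:RTN_bound} for the volume terms. For the face terms you combine \eqref{eq:RTN_bound_F} with $q=0$ and a trace approximation estimate for a low-order projection of $v$ on $T$. The paper inserts $\lproj{k}{T}v$ and uses $\lproj{k}{F}\lproj{k}{T}v=\lproj{k}{T}v$ on $F$, so it never applies $\lproj{k}{F}$ to $\Irt v$; you instead use the best-approximation property of $\lproj{k}{F}$ with $\lproj{0}{T}v$.

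Delete your first displayed chain ending in $2\norm{L^2(F)^d}{v-\Irt v}$, because it is false as written. For $k=0$ and $v=x$ one has $\Irt v=v$, but $\lproj{0}{F}v\neq v$ on $F$. Keep only the corrected decomposition you give right after it.
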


\begin{proof}
  For all $T \in \Th$, we have
  \begin{equation}\label{eq:IU_T_L2}
    \norm{0,T}{\underline{I}^k_{U,T} v}^2
    \overset{\eqref{eq:Iu},\eqref{eq:norm.0.h},\eqref{eq:prod.0.h}}=
    \norm{L^2(T)^d}{\Irt v}^2
    + h_T \sum_{F \in \FT} \norm{L^2(F)^d}{\lproj{k}{F} v - \Irt v}^2
  \end{equation}
  and
  \begin{equation}\label{eq:IU_T_H1}
    \norm{1,T}{\underline{I}^k_{U,T} v}^2
    \overset{\eqref{eq:Iu},\eqref{eq:norm.1.h}}=
    \norm{L^2(T)^{d\times d}}{\grad\Irt v}^2
    + h^{-1}_T \sum_{F \in \FT} \norm{L^2(F)^d}{\lproj{k}{F} v - \Irt v}^2.
  \end{equation}
  To estimate the boundary terms, we start by noticing that
  \begin{equation*} 
    \begin{aligned}
      &\sum_{F \in \FT} \norm{L^2(F)^d}{\lproj{k}{F} v - \Irt v}^2
      \\
      &\quad
      =  \sum_{F \in \FT} \norm{L^2(F)^d}{\lproj{k}{F} v - \lproj{k}{T} v  + \lproj{k}{T} v - v + v - \Irt v}^2
      \\
      &\quad
      \le
      3 \sum_{F \in \FT} \left(
      \norm{L^2(F)^d}{\lproj{k}{F} v - \lproj{k}{T} v}^2
      + \norm{L^2(F)^d}{\lproj{k}{T} v  - v}^2
      + \norm{L^2(F)^d}{ v  - \Irt v}^2
      \right),
    \end{aligned}
  \end{equation*}
  where, in the last passage, we have used the inequality $(a+b+c)^2\le 3 (a^2 + b^2 + c^2)$ valid for all $a,b,c\in\Real$.
  For the first term inside the summation, we can write
  $\norm{L^2(F)^d}{\lproj{k}{F} v - \lproj{k}{T} v}
  = \norm{L^2(F)^d}{\lproj{k}{F} (v - \lproj{k}{T} v)}
  \le \norm{L^2(F)^d}{v - \lproj{k}{T} v}$,
  where we have used, respectively, the polynomial reproduction and boundedness of $\lproj{k}{F}$ in the first and second steps.
  Hence,
  \begin{equation}\label{eq:H1-boundedness:boundary:intermediate}
    \sum_{F \in \FT} \norm{L^2(F)^d}{\lproj{k}{F} v - \Irt v}^2
    \lesssim
      \sum_{F \in \FT}\left(
      \norm{L^2(F)^d}{v - \lproj{k}{T} v}^2
      + \norm{L^2(F)^d}{v - \IRTNT{k+1} v}^2
      \right).
  \end{equation}
  Recalling the approximation properties of $\lproj{k}{T}$ (see, e.g., \cite[Theorem~1.45]{Di-Pietro.Droniou:20}) for the first term in the right-hand side
  and using~\eqref{eq:RTN_bound_F} with $(p,q) = (2,0)$ for the second one, we conclude that
  \begin{equation}\label{eq:H1-boundedness:boundary}
    \sum_{F \in \FT} \norm{L^2(F)^d}{\lproj{k}{F} v - \Irt v}^2
    \lesssim h_T \seminorm{H^1(T)^d}{v}^2.
  \end{equation}

  We next notice that
  \[
  \norm{L^2(T)^d}{\Irt v}^2
  \lesssim
  \norm{L^2(T)^d}{v}^2
  + \norm{L^2(T)^d}{\Irt v - v}^2
  \overset{\eqref{eq:RTN_bound}}{\lesssim}
  \norm{L^2(T)^d}{v}^2
  + h^{2}_T \seminorm{H^1(T)^d}{v}^2
  \]
  and
  \[
  \norm{L^2(T)^{d\times d}}{\grad \Irt v}^2
  \lesssim
  \seminorm{H^1(T)^d}{v}^2
  + \seminorm{H^1(T)^d}{\Irt v - v}^2
  \overset{\eqref{eq:RTN_bound}}{\lesssim} \seminorm{H^1(T)^d}{v}^2.
  \]
  Using~\eqref{eq:H1-boundedness:boundary} together with, respectively, the first and second bound above in \eqref{eq:IU_T_L2} and \eqref{eq:IU_T_H1}, we obtain that $\norm{0,T}{\underline{I}^k_{U,T} v}^2 \lesssim \norm{L^2(T)^d}{v}^2 + h_T^2 \seminorm{H^1(T)^d}{v}^2$ and $\norm{1,T}{\underline{I}^k_{U,T} v}^2 \lesssim  \seminorm{H^1(T)^d}{v}^2$.
  Summing over $T\in \Th$ and recalling that $h_T \leq \operatorname{diam}(\Omega) \lesssim 1$ yields the desired result.
\end{proof}

\begin{proof}[Proof of Theorem~\ref{thm:well-posedness}]
  Problem~\eqref{eq:discrete.bis} corresponds to a system of nonlinear ODEs in a finite-dimensional space. Its right-hand side is continuous in time and polynomial, hence locally Lipschitz, in the unknowns. The Cauchy--Lipschitz theorem therefore yields a unique local-in-time $C^1$ solution $(\uh,\bh)$.
    Since the phase space is finite-dimensional, the a priori estimate~\eqref{eq:energy_bound} prevents finite-time blow-up of the solution norm; the standard continuation criterion for ODEs therefore extends the solution up to $\tF$. Continuity of the recovered pressure and magnetic pressure then follows from the inf-sup condition~\eqref{eq:inf-sup}.
  Let us then prove \eqref{eq:energy_bound}.
  Taking $\vh = \uh$ in \eqref{eq:discrete.bis:momentum} and $\wh = \bh$ in \eqref{eq:discrete.bis:maxwell},
  noticing that $t_h(\uh,\uh,\uh) = t_h(\uh,\bh,\bh) = 0$ by the non-dissipativity property \eqref{eq:non-dissipativity} of $\th$,
  and using the coercivity \eqref{eq:ah:norm.equivalence} of $a_h$, we infer
  \[
  \frac12\partial_{t} \norm{0,h}{\uh}^2
  + \nu \norm{1,h}{\uh}^2
  + \seminorm{\beta,h}{\uh}^2
  - \th(\bh,\bh, \uh)
  \lesssim \int_\Omega f \cdot u_h
  \]
  and
  \[
  \frac12 \partial_{t} \norm{0,h}{\bh}^2
  + \mu \norm{1,h}{\bh}^2
  + \seminorm{\gamma,h}{\bh}^2
  - \th(\bh,\uh, \bh)
  \lesssim \int_\Omega g \cdot b_h.
  \]
  Summing the above relations and using the skew-symmetry \eqref{eq:skew-simmetry} of the trilinear form $\th$ to write $-\th(\bh,\uh,\bh) = \th(\bh,\bh,\uh) $, we get
  \[
  \frac12\partial^{}_{t} \left(
  \norm{0,h}{\uh}^2 + \norm{0,h}{\bh}^2
  \right)
  + \nu \norm{1,h}{\uh}^2
  + \mu \norm{1,h}{\bh}^2
  + \seminorm{\beta,h}{\uh}^2
  + \seminorm{\gamma,h}{\bh}^2
  \lesssim \int_\Omega f \cdot u_h
  + \int_\Omega g \cdot b_h.
  \]
  Using Young's inequalities, as well as the fact that $\norm{L^2(\Omega)^d}{v_h} \le \norm{0,h}{\vh}$ for all $\vh \in \U$ (by \eqref{eq:norm.0.h} and \eqref{eq:prod.0.h}), we can go on writing
  \begin{multline*}
    \frac12\partial^{}_{t} \left(
    \norm{0,h}{\uh}^2 + \norm{0,h}{\bh}^2
    \right)
    + \nu \norm{1,h}{\uh}^2
    + \mu \norm{1,h}{\bh}^2
    + \seminorm{\beta,h}{\uh}^2
    + \seminorm{\gamma,h}{\bh}^2
    \\
    \lesssim
    \frac12\norm{L^2(\Omega)^d}{f}^2
    + \frac12\norm{L^2(\Omega)^d}{g}^2
    + \frac12\norm{0,h}{\uh}^2
    + \frac12\norm{0,h}{\bh}^2.
  \end{multline*}
  Integrating the above inequality over $[0,t]$ for an arbitrary $t \in [0,\tF]$, we get
  \[
  \begin{aligned}
    &\norm{0,h}{\uh(t)}^2
    + \norm{0,h}{\bh(t)}^2
    + \int^t_0 \left(
    \nu \norm{1,h}{\uh(\tau)}^2
    + \mu \norm{1,h}{\bh(\tau)}^2
    + \seminorm{\beta,h}{\uh(\tau)}^2
    + \seminorm{\gamma,h}{\bh(\tau)}^2
    \right) \, d\tau
    \\
    &\qquad
    \lesssim
    \frac12\norm{L^2(0,t;L^2(\Omega)^d)}{f}^2
    + \frac12\norm{L^2(0,t;L^2(\Omega)^d)}{g}^2
    + \frac12\int^t_0 \left(
    \|\uh(\tau) \|^2_{0,h}
    + \| \bh(\tau) \|^2_{0,h}
    \right) \, d\tau
    \\
    &\qquad\quad
    + \norm{0,h}{\Iu u_0}^2
    + \norm{0,h}{\Iu b_0}^2.
  \end{aligned}
  \]
  Invoking Gronwall's inequality~\cite[Proposition~2.1]{Emmrich:99} after noticing that, by the continuity~\eqref{eq:H1-boundedness} of the interpolator and the Poincar\'e inequality in $H^1_0(\Omega)$, $\norm{0,h}{\Iu u_0} \lesssim \norm{H^1(\Omega)^d}{u_0} \lesssim \seminorm{H^1(\Omega)^d}{u_0}$ and, similarly, $\norm{0,h}{\Iu b_0} \lesssim \seminorm{H^1(\Omega)^d}{b_0}$, and recalling the definition~\eqref{eq:energy.norm} of the energy norm, the estimate~\eqref{eq:energy_bound} follows.
\end{proof}

\subsection{Basic error estimates}\label{sec:analysis:error.estimate}

\begin{proof}[Proof of Theorem~\ref{thm:error.estimate}]
  Set, for the sake of brevity,  $\hu{u}_h \coloneqq \Iu u$ and
  $\hu{b}_h \coloneqq \Iu b$.
  Since the discrete formulations~\eqref{eq:discrete} and~\eqref{eq:discrete.bis} are equivalent, we can proceed with the analysis of the latter.
  We start by summing the equations \eqref{eq:discrete.bis:momentum} and \eqref{eq:discrete.bis:maxwell}  to infer, for all $\vh,\wh \in \Zh$,
  \[
  \begin{aligned}
    &(\partial^{}_{t} \uh,\vh)_{0,h}
    + (\partial^{}_{t} \bh, \wh)_{0,h}
    + \nu a_h(\uh,\vh)
    + \mu a_{h}(\bh, \wh)
    \\
    &\quad
    + \th(\uh,\uh,\vh)
    + j_{ {\beta},h}(\uh,\vh)
    - \th(\bh,\bh, \vh)
    \\
    &\quad
    + \th(\uh,\bh,\wh)
    + j_{\gamma,h}(\bh,\wh)
    - \th(\bh,\uh, \wh)
    = \int_\Omega f \cdot v_h
    + \int_\Omega g \cdot w_h.
  \end{aligned}
  \]
  Subtracting from both sides the expression on the left-hand side with $\underline{u}_h$ and $\underline{b}_h$ respectively replaced by $\hu{u}_h$ and $\hu{b}_h$, using the fact that~\eqref{eq:modified:momentum} and~\eqref{eq:modified:maxwell} hold almost everywhere to substitute $f$ and $g$ in the right-hand side, invoking \eqref{eq:vh.zero.div} to justify the removal of pressure-related terms,
  and recalling the expression~\eqref{eq:Eh} of the global consistency error, we obtain the following equation for $(\eu, \eb)$ (cf.~\eqref{eq:approximation.errors}):
  \begin{equation}\label{eq:basic_error}
    \begin{aligned}
      &(\partial_{t} \eu,\vh)_{0,h}
      + (\partial_{t} \eb, \wh)_{0,h}
      + \nu a_h(\eu,\vh)
      + \mu a_{h}(\eb, \wh)
      \\
      &\quad
      + \th(\uh,\uh,\vh) - \th(\hu{u}_h, \hu{u}_h ,\vh)
      +\th(\uh,\bh,\wh) - \th(\hu{u}_h, \hu{b}_h ,\wh)
      \\
      &\quad
      + j_{ {\beta},h}(\eu,\vh) + \th(\hu{b}_h ,\hu{b}_h, \vh)
      - \th(\underline{ {b}}_h,\bh, \vh)
      \\
      &\quad
      + j_{\gamma,h}(\eb,\wh) + \th(\hu{b}_h, \hu{u}_h, \wh)
      - \th(\bh,\uh,\wh)
      =\mathcal{E}_h(\vh,\wh).
    \end{aligned}
  \end{equation}
  Take now $(\vh, \wh) = (\eu, \eb)$ in \eqref{eq:basic_error}, which is a valid choice by \eqref{eq:approximation.errors}.
  Recalling definition~\eqref{eq:j-seminorm} of the seminorms $\seminorm{\beta,h}{\cdot}$ and $\seminorm{\gamma,h}{\cdot}$,
  and rearranging, we obtain
  \[
  \begin{aligned}
    &\frac12 \partial_{t} \left( \|\eu\|^2_{0,h} +  \| \eb\|^2_{0,h} \right)
    + \nu a_h(\eu,\eu)
    + \mu a_h(\eb,\eb)
    + | \eu |^2_{ {\beta},h}
    + | \eb |^2_{\gamma,h}
    \\
    &\quad
    =\mathcal{E}_h(\eu,\eb)
    + \underbrace{\th(\hu{u}_h, \hu{u}_h ,\eu) - \th(\uh,\uh,\eu)}_{\term_1}
    + \underbrace{\th(\hu{u}_h, \hu{b}_h,\eb) - \th(\uh,\bh,\eb)}_{\term_2}
    \\
    &\qquad
    + \underbrace{%
      \th(\underline{ {b}}_h,\bh, \eu) - \th(\hu{b}_h ,\hu{b}_h, \eu)
      + \th(\underline{ {b}}_h,\uh,\eb) - \th(\hu{b}_h, \hu{u}_h, \eb).
    }_{\term_3}
  \end{aligned}
  \]
  Adding and subtracting $\th(\uh, \hu{u}_h,\eu)$ to $\term_1$,
  $\th(\uh, \hu{b}_h, \eb)$ to $\term_2$,
  and using the non-dissipativity property~\eqref{eq:non-dissipativity} of $t_h$, we get
  \[
  \begin{aligned}
    \term_1 &=
    - \th(\eu, \hu{u}_h ,\eu)
    - \cancel{\th(\uh,\eu,\eu)}
    = - \th(\eu, \hu{u}_h ,\eu),
    \\
    \term_2 &=
    - \th(\eu, \hu{b}_h,\eb)
    - \cancel{\th(\uh,\eb,\eb)}
    = - \th(\eu, \hu{b}_h,\eb).
  \end{aligned}
  \]
  We next  add and subtract $ \th(\bh,\hu{b}_h,\eu) + \th(\bh,\hu{u}_h,\eb)$ to $\term_3$ to obtain
  \[
  \term_3
  = \cancel{\th(\bh,\eb,\eu)} + \th(\eb, \hu{b}_h,\eu)
  +\cancel{\th(\bh,\eu, \eb)} + \th(\eb,\hu{u}_h, \eb),
  \]
  where we have used the skew-symmetry property \eqref{eq:skew-simmetry} in the cancellation.
  Gathering the above relations, we arrive at
  \[
  \begin{aligned}
    &\frac12 \partial_{t} \left(
    \norm{0,h}{\eu}^2 + \norm{0,h}{\eb}^2
    \right)
    + \nu a_h(\eu,\eu)
    + \mu a_h(\eb,\eb)
    + \seminorm{\beta,h}{\eu}^2
    + \seminorm{\gamma,h}{\eb}^2
    \\
    &\quad
    = \mathcal{E}_h(\eu,\eb)
    - \th(\eu, \hu{u}_h ,\eu)
    - \th(\eu, \hu{b}_h ,\eb)
    + \th(\eb, \hu{b}_h ,\eu)
    + \th(\eb,\hu{u}_h , \eb).
  \end{aligned}
  \]
  Using the coercivity~\eqref{eq:ah:norm.equivalence} of $a_h$ together with the boundedness \eqref{thbound.0h} of the trilinear form $\th$ and~\eqref{eq:W.1.infty.boundedness.IUh} of the interpolator $\Iu$, we go on writing
  \[
  \begin{aligned}
    &\frac12 \partial_{t} \left(
    \norm{0,h}{\eu}^2
    + \norm{0,h}{\eb}^2
    \right)
    + \nu \norm{1,h}{\eu}^2
    + \mu \norm{1,h}{\eb}^2
    + \seminorm{\beta,h}{\eu}^2
    + \seminorm{\gamma,h}{\eb}^2
    \\
    &\quad
    \lesssim \left|\mathcal{E}_h(\eu,\eb)\right|
    + \seminorm{W^{1,\infty}(\Omega)^d}{u} \norm{0,h}{\eu}^2
    + \seminorm{W^{1,\infty}(\Omega)^d}{b} \norm{0,h}{\eu} \norm{0,h}{\eb}
    + \seminorm{W^{1,\infty}(\Omega)^d}{u} \norm{0,h}{\eb}^2.
  \end{aligned}
  \]
  Applying Young's inequality $ab \le \frac12 a^2 + \frac{1}{2} b^2$ to the third term in the right-hand side and integrating in time from $0$ to  $t \in [0,\tF]$, we get
  \[
  \begin{aligned}
    &\norm{0,h}{\eu(t)}^2
    + \norm{0,h}{\eb(t)}^2
    \\
    &\quad
    + \int^{t}_{0} \left(
    \nu \norm{1,h}{\eu(\tau)}^2
    + \mu \norm{1,h}{\eb(\tau)}^2
    + \seminorm{\beta,h}{\eu(\tau)}^2
    + \seminorm{\gamma,h}{\eb(\tau)}^2
    \right) d\tau
    \\
    &\quad
    \lesssim
    \int^{t}_{0} \left| \mathcal{E}_h(\eu(\tau),\eb(\tau)) \right| d\tau
    \\
    &\qquad
    + \int^t_0 \left(
    \seminorm{W^{1,\infty}(\Omega)^d}{u(\tau)}
    + \seminorm{W^{1,\infty}(\Omega)^d}{b(\tau)}
    \right)
    \left(
    \norm{0,h}{\eu(\tau)}^2
    + \norm{0,h}{\eb(\tau)}^2
    \right) d\tau.
  \end{aligned}
  \]
  The conclusion now follows using Gronwall's inequality as in Theorem~\ref{thm:well-posedness}.
\end{proof}

\subsection{Convergence rates}\label{sec:analysis:convergence.rates}

The following bounds for the time consistency error~\eqref{eq:E.time.h} and the diffusive error~\eqref{eq:E.diff.h} can be found in \cite[Lemma~4.8]{Beirao-da-Veiga.Di-Pietro.ea:25} and \cite[Theorem~14]{Botti.Botti.ea:26}, respectively:
\begin{itemize}
\item Let $w\in H^{1}(0,\tF; H^{1}_{0}(\Omega)^d\cap H^{k+1}(\Th)^d)$.
  Then, for almost every $t \in (0,\tF)$, we have
  \begin{equation}\label{eq:estimate.Err.time}
    |\Err{\rm time}(w,\vh)|
    \lesssim
    \left(
    \sum_{T \in \Th} h_T^{2(k+1)}
    \Seminorm{H^{k+1}(T)^d}{\frac{d w}{dt}}^2
    \right)^{\frac12} \norm{0,h}{\vh}\qquad\forall\vh\in \Ub.
  \end{equation}
\item For all $w \in H^1_0(\Omega)^d\cap H^{k+2}(\Th)^d$, we have
  \begin{equation}\label{eq:estimate.Err.diff.h}
    |\Err{\rm diff}(w,\vh)|
    \lesssim
    \left(
    \sum_{T \in \Th} h_T^{2(k+1)} \seminorm{H^{k+2}(T)^d}{w}^2
    \right)^{\frac12}
    \norm{1,h}{\vh}\qquad\forall\vh\in \Ub.
  \end{equation}
\end{itemize}

The following lemma contains a regime-dependent estimate for the convective consistency error~\eqref{eq:E.conv.h}.
This estimate accounts for orders ranging from $h^{k+\frac12}$ (dominant convection) to $h^{k+1}$ (dominant diffusion), depending on the value of a local P\'eclet number.
When estimating the convective contributions to the global consistency error~\eqref{eq:Eh}, this P\'eclet number will embody either a Reynolds- or a Hartmann-type measure.

\begin{lemma}[Regime-dependent estimates of the convective errors]
  Denote by $\alpha \coloneqq (\alpha_T)_{T \in \Th}$ a family of strictly positive real numbers and let $\eta > 0$.
  Then, for all $v,w \in H_0^1(\Omega)^d \cap W^{1,\infty}(\Omega)^d \cap H^{k+1}(\Th)^d$ such that $\divergence v = \divergence w =0$ and all $\underline{z}_h \in \Zh$, the following estimates hold with a hidden constant additionally independent of $\eta$, $\alpha$, $v$, and $w$. For the sake of brevity, we write $\Pe_T$ instead of $\Pe_T(\eta,w,\alpha_T)$ and $\chi$ instead of $\chi(\eta,w,\alpha)$:
  \begin{equation}\label{eq:E.conv:estimate}
    \begin{aligned}
      &| \Err{\rm conv}(w,v,\underline{z}_h) |
      \\
      &\quad
      \lesssim
      \left[
        \sum_{T\in \Th} h^{2(k+1)}_T \left(
        \seminorm{W^{1,\infty}(T)^d}{v} \seminorm{H^{k+1}(T)^d}{w}
        + \seminorm{W^{1,\infty}(T)^d}{w} \seminorm{H^{k+1}(T)^d}{v}
        \right)^2
        \right]^{\frac12} \|  {z}_h \|_{ L^2(\Omega)^d}
      \\
      &\qquad
      + (1+\chi)^{\frac12}
      \left[
        \sum_{T\in \Th}  h^{2k+1}_T
        \min(1,\Pe_T)
        \norm{L^{\infty}(T)^d}{w} \seminorm{H^{k+1}(T)^d}{v}^2
        \right]^{\frac12}
      \left(
      \seminorm{\alpha,h}{\underline{z}_h}^2
      + \eta \norm{1,h}{\underline{z}_h}^2
      \right)^{\frac12}
    \end{aligned}
  \end{equation}
  and
  \begin{equation}\label{eq:j.alpha:estimate}
    |j_{ {\alpha},h}(\IUh v,\underline{z}_h)|
    \lesssim \left[
      \sum_{T \in \Th} h_T^{2k+1} \min(1,\Pe_T) \alpha_T  \seminorm{H^{k+1}(T)^d}{v}^2
      \right]^{\frac12}\,
    \left(
    \seminorm{\alpha,h}{\underline{z}_h}^2
    + \eta \norm{1,h}{\underline{z}_h}^2
    \right)^{\frac12}.
  \end{equation}
\end{lemma}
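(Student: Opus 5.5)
The plan is to split $\Err{\rm conv}(w,v,\underline{z}_h)$ element by element into three kinds of terms: terms that carry a full $h^{k+1}$ factor, terms that vanish by orthogonality or conformity, and face terms of upwind type. Only the last kind produces the regime-dependent factor $h^{k+\frac12}\min(1,\Pe_T)^{\frac12}$.

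Set $\hat v_T\coloneqq\Irt v$ and $\hat w_T\coloneqq\Irt w$. For $z_h$ I use Remark~\ref{rem:div-free-subspace}: $z_T\in\Poly{k}(T)^d$ and $z_F=0$ on boundary faces.

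\textbf{Step 1: first argument.} On each $T$ I write $(w\cdot\grad)v-(\hat w_T\cdot\grad)\hat v_T=((w-\hat w_T)\cdot\grad)\hat v_T+(w\cdot\grad)(v-\hat v_T)$. The first piece is bounded by $\norm{L^2(T)^d}{w-\hat w_T}\seminorm{W^{1,\infty}(T)^d}{\hat v_T}\norm{L^2(T)^d}{z_T}$. By \eqref{eq:RTN_bound} and \eqref{eq:W.1.infty.boundedness.IUh} this gives $h_T^{k+1}\seminorm{H^{k+1}(T)^d}{w}\seminorm{W^{1,\infty}(T)^d}{v}\norm{L^2(T)^d}{z_T}$, which belongs to the first line of \eqref{eq:E.conv:estimate}. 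In the face terms of $t_T$ I will likewise replace $\hat w_T\cdot n_{TF}$ by $w\cdot n_{TF}$. The error made is $\norm{L^\infty(F)}{w-\hat w_T}\lesssim h_T\seminorm{W^{1,\infty}(T)^d}{w}$, multiplied by $\norm{L^2(F)^d}{\lproj{k}{F}v-\hat v_T}\lesssim h_T^{k+\frac12}\seminorm{H^{k+1}(T)^d}{v}$. The factor $z_F\pm z_T$ is then reduced to $z_T$ and $z_F-z_T$. Discrete trace inequalities and the $h_T^{-1}$ weight absorb the factor $h_T$. This is again a first-line contribution, except that it involves $\norm{0,h}{\underline{z}_h}$ rather than $\norm{L^2(\Omega)^d}{z_h}$. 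I will need to check whether the jump part $z_F-z_T$ should instead be routed to the second line. I expect that it can: the coefficient $h_T\seminorm{W^{1,\infty}}{w}$ is harmless there.

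\textbf{Step 2: volume term.} I integrate $\int_T(w\cdot\grad)(v-\hat v_T)\cdot z_T$ by parts using $\divergence w=0$. This gives $-\int_T (v-\hat v_T)\cdot(w\cdot\grad)z_T+\sum_{F\in\FT}\int_F(w\cdot n_{TF})(v-\hat v_T)\cdot z_T$.

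In the volume part I replace $w$ by $\lproj{0}{T}w$. Then $(\lproj{0}{T}w\cdot\grad)z_T\in\Poly{k-1}(T)^d$, and \eqref{eq:IRTNT} with $\ell=k+1$ gives $\lproj{k-1}{T}(v-\hat v_T)=0$, so this contribution vanishes. The remainder is bounded by $h_T\seminorm{W^{1,\infty}(T)^d}{w}\,h_T^{k+1}\seminorm{H^{k+1}(T)^d}{v}\,\norm{L^2(T)^{d\times d}}{\grad z_T}$. An inverse inequality turns this into a first-line term.

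For the face part I subtract $\sum_T\sum_{F\in\FT}\int_F(w\cdot n_{TF})(v-\lproj{k}{F}v)\cdot z_F$. This sum is zero, because $w\cdot n$ and $v$ are continuous across interfaces, $z_F$ and $\lproj{k}{F}v$ are single-valued, and $z_F=0$ on $\partial\Omega$. Combining with the $-\frac12$ face terms of $t_T$ leaves, up to first-line remainders, terms of the form $\int_F(w\cdot n_{TF})\,\delta_F\cdot(z_F-z_T)$ with $\delta_F\in\{v-\hat v_T,\ \lproj{k}{F}v-\hat v_T\}$. The terms of the form $\int_F(w\cdot n_{TF})(\lproj{k}{F}v - v)\cdot z_T$ are $L^2(F)$-orthogonal up to replacing $w$ by $\lproj{0}{T}w$, again with an $h_T$ gain.

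\textbf{Step 3: regime-dependent face terms (main obstacle).} Each surviving term satisfies
\[
\Big|\int_F(w\cdot n_{TF})\,\delta_F\cdot(z_F-z_T)\Big|\lesssim\norm{L^\infty(T)^d}{w}\,h_T^{k+\frac12}\seminorm{H^{k+1}(T)^d}{v}\,\norm{L^2(F)^d}{z_F-z_T}.
\]
If $\Pe_T\le1$, I write $\norm{L^2(F)^d}{z_F-z_T}\le h_T^{\frac12}\eta^{-\frac12}\,\eta^{\frac12}\norm{1,T}{\underline{z}_T}$. Since $\norm{L^\infty(T)^d}{w}h_T/\eta\le\Pe_T$, the term is bounded by $(\Pe_T\norm{L^\infty(T)^d}{w})^{\frac12}h_T^{k+\frac12}\seminorm{H^{k+1}(T)^d}{v}\,\eta^{\frac12}\norm{1,T}{\underline{z}_T}$. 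If $\Pe_T>1$, I write $\norm{L^2(F)^d}{z_F-z_T}\le\alpha_T^{-\frac12}\seminorm{\alpha,T}{\underline{z}_T}$ and $\norm{L^\infty(T)^d}{w}=\norm{L^\infty(T)^d}{w}^{\frac12}\big(\norm{L^\infty(T)^d}{w}/\alpha_T\big)^{\frac12}\alpha_T^{\frac12}$, and by \eqref{eq:chi.alpha} the middle factor is at most $\chi^{\frac12}$. A Cauchy--Schwarz inequality over $T$ and $F$ then gives the second line of \eqref{eq:E.conv:estimate}.

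The delicate bookkeeping is in Steps 1--2. One has to make sure that every term that is not of $h^{k+1}$ type is exactly a jump term $z_F-z_T$ weighted by $w\cdot n$. This relies crucially on the conformity cancellation and on the $\Poly{k-1}$-orthogonality of the RTN interpolator.

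\textbf{Estimate \eqref{eq:j.alpha:estimate}.} This is the same dichotomy. By \eqref{eq:H1-boundedness:boundary:intermediate} and approximation properties, $\norm{L^2(F)^d}{\lproj{k}{F}v-\hat v_T}\lesssim h_T^{k+\frac12}\seminorm{H^{k+1}(T)^d}{v}$. If $\Pe_T>1$, I bound $|j_{\alpha,T}(\IUT v,\underline{z}_T)|\le\seminorm{\alpha,T}{\IUT v}\seminorm{\alpha,T}{\underline{z}_T}$. If $\Pe_T\le1$, I use instead $\alpha_T\,h_T^{k+\frac12}\seminorm{H^{k+1}(T)^d}{v}\,h_T^{\frac12}\norm{1,T}{\underline{z}_T}=(\alpha_T h_T/\eta)^{\frac12}\alpha_T^{\frac12}h_T^{k+\frac12}\seminorm{H^{k+1}(T)^d}{v}\,\eta^{\frac12}\norm{1,T}{\underline{z}_T}$, together with $\alpha_T h_T/\eta\le\Pe_T$. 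Summing with Cauchy--Schwarz concludes.
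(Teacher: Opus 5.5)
Your overall route is viable and differs from the paper's: you integrate by parts with the exact solenoidal $w$ and then freeze it to $\lproj{0}{T}w$, whereas the paper uses a purely local identity with the constant $\overline{w}_T=\lproj{0}{T}\widehat{w}_T$. Your Step~3 and your estimate of $j_{\alpha,h}$ coincide with the paper's. There is, however, one step that fails as argued: the jump part of the remainder in Step~1. If you replace $\widehat{w}_T\cdot n_{TF}$ by $w\cdot n_{TF}$ in the whole face term of $t_T$, you are left with
\[
\frac12\sum_{T\in\Th}\sum_{F\in\FT}\int_F\big((w-\widehat{w}_T)\cdot n_{TF}\big)(\lproj{k}{F}v-\widehat{v}_T)\cdot(z_F-z_T).
\]
With your bound $h_T\seminorm{W^{1,\infty}(T)^d}{w}$ on the first factor, this is of size $h_T^{k+\frac32}\seminorm{W^{1,\infty}(T)^d}{w}\seminorm{H^{k+1}(T)^d}{v}\norm{L^2(F)^d}{z_F-z_T}$. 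It cannot be controlled by $\norm{L^2(\Omega)^d}{z_h}$, because the interior face unknowns of $\underline{z}_h\in\Zh$ are free ($B_h$ only involves the $z_T$). As written, you therefore only obtain the first line with $\norm{0,h}{\underline{z}_h}$.

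Your proposed rerouting into the second line does not work either. Bound $\norm{L^2(F)^d}{z_F-z_T}$ by $\alpha_T^{-\frac12}\seminorm{\alpha,T}{\underline{z}_T}$ or by $h_T^{\frac12}\eta^{-\frac12}\,\eta^{\frac12}\norm{1,T}{\underline{z}_T}$ and match with the second line. Either way you need $h_T^2\seminorm{W^{1,\infty}(T)^d}{w}^2\lesssim(\alpha_T+\norm{L^\infty(T)^d}{w})\norm{L^\infty(T)^d}{w}$. This fails uniformly in $w$ and $\alpha$: take $\alpha_T$ small and $w$ oscillating on a scale much finer than $h_T$. The lemma demands constants independent of $w$, $\alpha$ and $\eta$, so the coefficient is not ``harmless''.

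The missing ingredient is the $L^\infty$ bound on the normal component that the paper uses for $\term_2$. By \eqref{eq:IRTNT}, $\widehat{w}_T\cdot n_{TF}=\lproj{k}{F}(w\cdot n_{TF})$, so the $L^\infty$-stability of $\lproj{k}{F}$ gives $\norm{L^\infty(F)}{\widehat{w}_T\cdot n_{TF}}\lesssim\norm{L^\infty(T)^d}{w}$. This bound is in terms of $\norm{L^\infty(T)^d}{w}$, not $h_T\seminorm{W^{1,\infty}(T)^d}{w}$.

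The simplest repair has three steps. First split $\frac12(z_F+z_T)=z_T+\frac12(z_F-z_T)$. Then perform the replacement $\widehat{w}_T\to w$ only in the $z_T$-part, where the discrete trace inequality does produce $\norm{L^2(T)^d}{z_T}$. Finally send $-\frac12\int_F(\widehat{w}_T\cdot n_{TF})(\lproj{k}{F}v-\widehat{v}_T)\cdot(z_F-z_T)$ untouched to Step~3.

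The remaining face terms then add up to $\int_F(w\cdot n_{TF})(v-\lproj{k}{F}v)\cdot z_T$. This is either a first-line term by orthogonality against $(\lproj{0}{T}w\cdot n_{TF})\,z_{T|F}\in\Poly{k}(F)^d$, or, via your conformity cancellation, the jump term $-\int_F(w\cdot n_{TF})(v-\lproj{k}{F}v)\cdot(z_F-z_T)$. With this change the rest of your argument goes through. The weaker bound with $\norm{0,h}{\underline{z}_h}$ would still suffice for Corollary~\ref{cor:convergence.rates}, since the energy norm controls $\norm{0,h}{\cdot}$, but it is not the statement of the lemma.
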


\begin{proof}
  Let, for the sake of brevity $\hu{v}_h \coloneqq \IUh v$ and $\hu{w}_h \coloneqq \IUh w$. Expanding $t_h$ according to its definition \eqref{eq:th} in the convective consistency error \eqref{eq:E.conv.h} and rearranging the boundary terms, we obtain
  \[
  \begin{aligned}
    &\Err{\rm conv}(w,v,\underline{z}_h)
    \\
    &\quad=
    \underbrace{%
      \sum_{T\in \Th} \left[
        \int_T (w \cdot \grad) v \cdot  z_T
        - \int_T (\widehat{w}_T \cdot \grad) \widehat{v}_T \cdot  z_T
        - \sum_{F\in \mathcal{F}_T} \int_F (\widehat{w}_T \cdot \nTF) ( \widehat{v}_F - \widehat{v}_T) \cdot  z_T
        \right]
    }_{\term_1}
    \\
    &\qquad
    \underbrace{%
      -\frac12 \sum_{T\in \Th} \sum_{F\in \mathcal{F}_T} \int_F (\widehat{w}_T \cdot \nTF) ( \widehat{v}_F - \widehat{v}_T) \cdot ( z_F -  z_T).
    }_{\term_2}
  \end{aligned}
  \]
  We write $\term_i = \sum_{T \in \Th} \term_i(T)$ and estimate each contribution separately.
  Letting $\overline{w}_T \coloneqq \lproj{0}{T} \widehat{w}_T$, we notice that
  \[
  \begin{aligned}
    \int_T (\overline{w}_T \cdot \grad) v \cdot  z_T
    &= - \int_T v \cdot [\divergence ( z_T \otimes \overline{w}_T )]
    + \sum_{F \in \mathcal{F}_T} \int_F (\overline{w}_T \cdot \nTF) (v \cdot  z_T)
    \\
    &= - \int_T \lproj{k-1}{T} v \cdot [\divergence ( z_T \otimes \overline{w}_T )]
    + \sum_{F \in \mathcal{F}_T} \int_F (\overline{w}_T \cdot \nTF) (\lproj{k}{F} v \cdot  z_T)
    \\
    \overset{\eqref{eq:IRTNT},\eqref{eq:Iu}}&= - \int_T \widehat{v}_T \cdot [\divergence ( z_T \otimes \overline{w}_T )]
    + \sum_{F \in \mathcal{F}_T} \int_F (\overline{w}_T \cdot \nTF) (\widehat{v}_F \cdot  z_T),
  \end{aligned}
  \]
  where the first equality follows from an integration by parts while, in the second equality, we have used the fact that $z_T \in \Poly{k}(T)^d$ (by Remark~\ref{rem:div-free-subspace}), which gives $\divergence ( z_T \otimes \overline{w}_T) \in \Poly{k-1}(T)^d$  and $(\overline{w}_T\cdot n_{TF}) z_T|_F\in \Poly{k}(F)^d$ and justifies the insertion of projectors.
  Integrating by parts the first term in the right-hand side of the above expression and rearranging, we obtain
  \[
  \int_T (\overline{w}_T \cdot \grad) (v- \widehat{v}_T) \cdot  z_T
  - \sum_{F \in \FT} \int_F (\overline{w}_T \cdot \nTF) (\widehat{v}_F - \widehat{v}_T ) \cdot  z_T
  = 0.
  \]
  Subtracting this quantity from $\term_1(T)$, then adding and subtracting $\int_T (\widehat{w}_T \cdot \grad)  v \cdot  z_T$, we arrive at
  \[
  \begin{aligned}
    \term_1(T)
    &= \int_T \left[( w - \widehat{w}_T) \cdot \grad\right] v \cdot  z_T
    + \int_T \left[(\widehat{w}_T - \overline{ w}_T) \cdot \grad\right]( v - \widehat{v}_T) \cdot  z_T
    \\
    &\quad
    - \sum_{F \in \FT} \int_F (\widehat{w}_T - \overline{ w}_T) \cdot \nTF\, (\widehat{v}_F - \widehat{v}_T) \cdot  z_T.
  \end{aligned}
  \]
  Proceeding as in \cite[Lemma~4.10]{Beirao-da-Veiga.Di-Pietro.ea:25} via H\"older inequalities and approximation properties of $L^2$-orthogonal projectors to bound the various factors in the above expression, we get
   \begin{equation}\label{eq:th:consistency:T1+T2}
    | \term_1 |
    \lesssim
      \left[\sum_{T \in \Th}
        h_T^{2(k+1)} \left(
        \seminorm{W^{1,\infty}(T)^d}{v}\seminorm{H^{k+1}(T)^d}{w}
        + \seminorm{W^{1,\infty}(T)^d}{w}\seminorm{H^{k+1}(T)^d}{v}
        \right)^2\right]^{\frac12}
    \|  z_h \|_{ L^2(\Omega)^d}.
  \end{equation}

  Let us now consider $\term_2(T)$ for a generic $T \in \Th$.
  Using H\"{o}lder inequalities and noticing that $\widehat{w}_T \cdot n_{TF}=\lproj{k}{F}(w\cdot n_{TF})$ (see \eqref{eq:IRTNT} with $\ell=k+1$) and that $w\in C(\overline{T})^3$, the $L^\infty$-boudnedness of $\lproj{k}{F}$ yields
  $\norm{L^\infty(F)}{\widehat{w}_T \cdot n_{TF}}
  \lesssim \norm{L^\infty(F)}{w\cdot n_{TF}}
  \le
  \norm{L^\infty(T)^d}{w}$
  and thus
  \[
  \begin{aligned}
    |\term_2(T)|
    &\lesssim \norm{L^{\infty}(T)^d}{w}
    \sum_{F\in \mathcal{F}_T} \norm{L^2(F)^d}{\widehat{v}_F - \widehat{v}_T}
    \norm{L^2(F)^d}{z_F -  z_T}
    \\
    &\lesssim
    h_T^{k + \frac12}
    \norm{L^{\infty}(T)^d}{w}^{\frac12} \seminorm{H^{k+1}(T)^d}{v}
    \left(
    \norm{L^{\infty}(T)^d}{w}
    \sum_{F\in \mathcal{F}_T} \norm{L^2(F)^d}{z_F -  z_T}^2
    \right)^{\frac12},
  \end{aligned}
  \]
  where the conclusion follows by applying the Cauchy--Schwarz inequality to the sum over faces,
  recalling~\eqref{eq:H1-boundedness:boundary:intermediate}, and invoking the approximation properties of $\lproj{k}{T}$ together with~\eqref{eq:RTN_bound_F} with $(p,q) = (2,k)$ to write
  \begin{equation}\label{eq:est.vF.vT}
    \norm{L^2(F)^d}{\widehat{v}_F - \widehat{v}_T}\lesssim h_T^{k + \frac12}\seminorm{H^{k+1}(T)^d}{v}.
  \end{equation}
  We next proceed differently depending on the local regime as identified by the local Péclet number.
  Recalling the definition \eqref{eq:chi.alpha} of $\chi$ to estimate the term in parentheses, we readily obtain
  \[
  \text{
    $|\term_2(T)| \lesssim \chi^{\frac12} h_T^{k + \frac12}
    \norm{L^{\infty}(T)^d}{w}^{\frac12} \seminorm{H^{k+1}(T)^d}{v} \, \seminorm{\alpha,T}{\underline{z}_T}$
    if $\Pe_T > 1$.
  }
  \]
  If, on the other hand, $\Pe_T \le 1$, we notice that
  \[
  \norm{{L^\infty(T)^d}}{w}
  \sum_{F \in \FT} \norm{L^2(F)^d}{z_F -  z_T}^2
  \overset{\eqref{eq:PeT}}\le
  \eta \Pe_T h_T^{-1} \sum_{F \in \FT} \norm{L^2(F)^d}{z_F -  z_T}^2
  \overset{\eqref{eq:norm.1.h}}\le
  \eta \Pe_T \norm{1,T}{\underline{z}_T}^2,
  \]
  leading to the estimate
  \[
  \text{
    $|\term_2(T)|
    \lesssim
    h_T^{k + \frac12} \norm{L^{\infty}(T)^d}{w}^{\frac12}
    \Pe_T^{\frac12} \seminorm{H^{k+1}(T)^d}{v} \, \eta^{\frac12} \norm{1,T}{\underline{z}_T}$
    if $\Pe_T \le 1$.
  }
  \]
  The above regime-dependent bounds together with a Cauchy--Schwarz inequality on the sum over elements give
  \begin{equation}\label{eq:th:consistency:T3}     
    |\term_2|
    \lesssim
    (1 + \chi)^{\frac12}
      \left[
        \sum_{T \in \Th}
        h_T^{2k + 1} \| w\|_{ L^{\infty}(T)^d} \min(1,\Pe_T) |  v|_{ H^{k+1}(T)^d}^2
        \right]^{\frac12}
    \left(
    | \underline{ z}_h |_{ {\alpha},h}^2
    + \eta \| \underline{ z}_h \|_{1,h}^2
    \right)^{\frac12}.
  \end{equation}
  Gathering the estimates \eqref{eq:th:consistency:T1+T2} and \eqref{eq:th:consistency:T3} yields \eqref{eq:E.conv:estimate}.
  \smallskip

  The estimate \eqref{eq:j.alpha:estimate} is obtained in the same way as the estimate for $\term_2$.
  Specifically, using Cauchy--Schwarz inequalities together with \eqref{eq:est.vF.vT}, we write
  \[
  |j_{ {\alpha},h}(\hu{v}_h, \underline{z}_h)|
  \le \sum_{T \in \Th} \alpha_T^{\frac12} h_T^{k + \frac12} |  v |_{ H^{k+1}(T)^d}
  \left(
  \alpha_T \sum_{F \in \FT} \|  z_F -  z_T \|_{ L^2(F)^d}^2
  \right)^{\frac12}
  \]
  Proceeding as above with $\alpha_T$ replacing $\| w \|_{ L^\infty(T)^d}$, we have the following regime-dependent estimate for the term in parentheses:
  \[
  \left(
  \alpha_T \sum_{F \in \FT} \|  z_F -  z_T \|_{ L^2(F)^d}^2
  \right)^{\frac12}
  \lesssim \begin{cases}
    | \underline{z}_T |_{ {\alpha},T} & \text{if $\Pe_T > 1$},
    \\
    \eta^{\frac12} \Pe_T^{\frac12} \| \underline{z}_T \|_{1,T} & \text{if $\Pe_T \le 1$}.
  \end{cases}
  \]
  The above bounds together with Cauchy--Schwarz inequalities give~\eqref{eq:j.alpha:estimate}.
\end{proof}

\begin{proof}[Proof of Corollary~\ref{cor:convergence.rates}]
  We start by recalling that $\Theta<\infty$ by the continuity and strict-positivity assumptions.
  The estimate \eqref{eq:convergence.rate} is then obtained from \eqref{eq:basic.estimate} by writing the definition~\eqref{eq:Eh} of the global consistency error with $(\vh,\wh) = (\eu,\eb)$ and estimating the terms in the right-hand side using, respectively:
  \eqref{eq:estimate.Err.time} with
  $(w,\vh) = (u,\vh)$ and %
  $(w,\vh) = (b,\wh)$;
  \eqref{eq:estimate.Err.diff.h} with
  $(w,\vh) = (u,\vh)$ %
  and $(w,\vh) = (b,\wh)$;
  \eqref{eq:E.conv:estimate} with
  $(\eta,\alpha)=(\nu,\beta)$ for the first two terms and $(\eta,\alpha)=(\mu,\gamma)$ for the last two, and
  $(w,v,\underline{z}_h) = (u,u,\vh)$, %
  $(w,v,\underline{z}_h) = (b,b,\vh)$, %
  $(w,v,\underline{z}_h) = (u,b,\wh)$ and %
  $(w,v,\underline{z}_h) = (b,u,\wh)$;
  \eqref{eq:j.alpha:estimate} with
  $(\eta,\alpha)=(\nu,\beta)$ and $(\eta,\alpha)=(\mu,\gamma)$, respectively, and
  $(v,w,\underline{z}_h) = (u,u,\vh)$ and %
  $(v,w,\underline{z}_h) = (b,b,\wh)$.
  Applying Cauchy--Schwarz inequalities and simplifying concludes the proof.
\end{proof}


\section*{Conclusion}

We have introduced and analyzed a novel method for the unsteady incompressible magnetohydynamics equations on convex domains based on hybrid approximations of all unknown fields.
The proposed discretization combines the flexibility and computational advantages of hybrid methods with a convection-semirobust error analysis, yielding estimates for the velocity and magnetic field that are independent of the inverse diffusion coefficients.
In addition, the method attains improved convergence orders in the diffusion-dominated regime, as identified by appropriate dimensionless local numbers of Reynolds or Hartmann type.
The theoretical analysis is supported by a comprehensive set of numerical experiments, which confirm the predicted convergence rates and robustness with respect to the physical parameters and illustrate the behavior of the method on a physically inspired magnetic lid-driven cavity test.
These results demonstrate that the proposed approach provides an accurate and efficient framework for the numerical simulation of incompressible magnetohydrodynamic flows and offer a promising basis for applications to more complex models and geometries.


\section*{Acknowledgements}

Funded by the European Union (ERC Synergy, NEMESIS, project number 101115663).
Views and opinions expressed are however those of the authors only and do not necessarily reflect those of the European Union or the European Research Council Executive Agency. Neither the European Union nor the granting authority can be held responsible for them.


\bibliographystyle{plain}
\bibliography{mhdhho}

\end{document}